\newtheorem{theorem}{Theorem} 
\newtheorem{lemma}{Lemma} 
\newtheorem{proposition}{Proposition} 
\theoremstyle{definition}
\newtheorem{remark}{Remark}  
\newcommand{\E}{\mathbb{E}}
\newcommand{\R}{\mathbb{R}}
\renewcommand{\P}{\mathbb{P}}
\newcommand{\bs}{\boldsymbol}
\begin{document}

\title{Asymptotic normality in the maximum entropy models on graphs with an increasing number of parameters }

\author{Ting Yan \thanks{Department of Statistics, Central China Normal University, Wuhan, 430079, China.
\texttt{Email:} tingyanty@gmail.com.} \hspace{20mm}
Yunpeng Zhao \thanks{Department of statistics,
George Mason University, Virginia 22030-4444,
USA. \texttt{Email:} yzhao15@gmu.edu.} 
\hspace{20mm}
Hong Qin \thanks{Department of Statistics, Central China Normal University, Wuhan, 430079, China.
\texttt{Email:} qinhong@mail.ccnu.edu.cn. } 
\\
$^{*,\ddag}$Central China Normal University and $^\dag$George Mason University
}
\date{}

\maketitle

\begin{abstract}
Maximum entropy models, motivated by applications in neuron science, are natural generalizations of the $\beta$-model to weighted graphs. Similar to the $\beta$-model, each vertex in maximum entropy models is assigned a potential parameter, and the degree sequence is the natural sufficient statistic. Hillar and Wibisono (2013) has proved the consistency of the maximum likelihood estimators. In this paper, we further establish the asymptotic normality for any finite number of the maximum likelihood estimators in the maximum entropy models with three types of edge weights, when the total number of parameters goes to infinity. Simulation studies are provided to illustrate the asymptotic results.

\vskip 5 pt \noindent
\textbf{Key words}: Maximum entropy models; Maximum likelihood estimator; Asymptotic normality;
Increasing number of parameters. \\

{\noindent \bf Mathematics Subject Classification:} 	62E20; 62F12.
\end{abstract}

\vskip 5 pt

Running title: Asymptotic normality in the maximum entropy models

\section{Introduction}

In neuron networks, neurons in one region of the brain may transmit a continuous signal using
sequences of spikes to a second receiver region. The coincidence detectors in the second region
capture the absolute difference in spike times between pairs of neurons projecting
from the first region. There may be three possible types of timing differences: zero or nonzero indicator;
countable number of possible values; any nonnegative real value. Exploring how the transmitted
signal in the first region can be recovered by the second is a basic question in the analysis of neuron networks.
Maximum entropy models provide a possible solution to this question for the above three possible weighted edges.
For detailed explanations, see \cite{Hillar:Wibisono:2013}; for their wide applications in the biological studies as well as other
disciplines such as
economics and physics, see \cite{Hillar:Wibisono:2013, Duik:Phillips:Schapire:2007, Banavar:Maritan:Volkov:2010, Wu:2003, Yeo:Bergue:2004}
and references therein.
Maximum entropy models (sometimes with different names) also appear in other fields of network analysis, e.g., community detection and social network analysis. For example, see \cite{Chatterjee, Bickel:Chen:2009, Bickel:Chen:Levina:2012, Olhede:Wolfe:2012, Zhao:Levina:Zhu:2012, Rinaldo2013}.

In the maximum entropy models, the degree sequence is the exclusively natural sufficient
statistics on the exponential family distributions and fully captures the information of an undirected graph. Its study primarily focuses on understanding the generating mechanisms of networks.
When network edge takes dichotomous values (``0" or ``1"), the maximum entropy model becomes the $\beta$-model (a name given by
Chatterjee, Diaconis and Sly (2011)), an undirected version of the $p_1$ model for directed graphs by Holland and Leinhardt (1981).
Rinaldo, Petrovi\'{c} and Fienberg (2013)
derived necessary and sufficient conditions for the existence and uniqueness of the maximum likelihood estimate (MLE).
As the number of parameters goes to infinity,
Chatterjee, Diaconis and Sly (2011) proved that the MLE
is uniformly consistent; Yan and Xu (2013) further derived its asymptotical normality.
When the maximum entropy models involve the finite discrete, infinite discrete or continuous weighted edges,
Hillar and Wibisono (2013) have obtained the explicit conditions for the existence and uniqueness of the MLE
and proved that the MLE is uniformly consistent as the number of parameters goes to infinity.

Statistical interests are involved with not only the consistency of estimators but also its asymptotic distributions.
The latter can be used to construct the confidence interval on parameters and performed the hypothesis testing.
In the asymptotic framework considered in this paper, the number of network vertices goes to infinity and
the number of parameter is identical to the dimension of
networks (i.e., the number of vertices). Instead of studying a more complicated situation on linear combinations of all
MLEs, we describe the central limit theorems for the MLEs through the asymptotic behavior of a
finite number of the MLEs, although the total number of parameters goes to infinity.
With this point, we aim to establish the asymptotic normality of the MLEs
when edges take three types of weights as in Hillar and Wibisono (2013).
A key step in our proofs applies a highly accurate approximate inverse of the Fisher information matrix
by Yan and Xu (2013).

The remainder of this article is organized as follows. In Section 2, we lay out the
asymptotic distributions of the MLEs in the maximum entropy models with
the finite discrete, infinite discrete and continuous weighted edges in subsection 2.1, 2.2, 2.3,
respectively. Simulation studies are given in Section 3.
Section 4 concludes with summary and discussion.
All proofs are relegated to Appendix.

\section{Asymptotic normalities}
We first give a brief description on the maximum entropy models.
Consider an undirected graph $\mathcal{G}$ with no self-loops on $n$ vertices labeled by ``1, \ldots, n".
Let $a_{ij}$ be the weight of edge $(i,j)$ taking values from the set $\Omega$,
where $\Omega$ could be a finite discrete, infinite discrete or continuous set.
Define $d_i = \sum_{j \neq i} a_{ij}$ as the degree of vertex $i$, and $\mathbf{d} = (d_1, \dots, d_n)^T$ is the degree sequence of $\mathcal{G}$.
Let $\mathcal{S}$ be a $\sigma$-algebra over the set $\Omega$ of all possible values of $a_{ij}$, $1\le i<j\le n$. Assume there is a canonical $\sigma$-finite probability measure $\nu$ on $(\Omega,\mathcal{S})$. Let $\nu^{\binom{n}{2}}$ be the product measure on $\Omega^{\binom{n}{2}}$.
The maximum entropy models assume that the density function of the symmetric adjacent matrix $\mathbf{a} = (a_{ij})_{i,j=1}^n$
with respective to $\nu^{\binom{n}{2}}$ has the exponential form
with the degree sequence as natural sufficient statistics\footnote{Following
Hillar and Wibisono (2013), we use $-\theta$ in the parameterization~\eqref{Eq:MaxEntDist} instead of the classical $\theta$ since it will simplify the notations in the later presentation.}, i.e.,
\begin{equation}\label{Eq:MaxEntDist}
p_\theta(\mathbf{a}) = \exp \big( -\boldsymbol{\theta}^T \mathbf{d} - z(\boldsymbol{\theta}) \big),
\end{equation}
where $z(\boldsymbol{\theta})$ is the normalizing constant,
\begin{eqnarray*}
z(\boldsymbol{\theta}) = \log \int_{S^{\binom{n}{2}}} \exp\big( -\boldsymbol{\theta}^T \mathbf{d} \big) \; \nu^{\binom{n}{2}}(d \mathbf{a})
& = &
\log \prod_{1\le i< j\le n} \int_S \exp \big(-(\theta_i+\theta_j) a_{ij} \big) \; \nu(da_{ij}),
\end{eqnarray*}
and for fixed $n$, the parameter vector $\boldsymbol{\theta} = (\theta_1, \dots, \theta_n)^T$ belongs to the natural parameter space (Page 1, Brown, 1984)
\begin{equation*}
\Theta = \{\boldsymbol{\theta} \in \R^n \colon z(\boldsymbol{\theta}) < \infty\}.
\end{equation*}
The parameters $\theta_1, \dots, \theta_n$ can be interpreted as the strength of each vertex
that determines how strongly the vertices are connected to each other. The probability distribution \eqref{Eq:MaxEntDist} implies
that the edges $(i,j)$ for all $1\le i<j \le n$ are mutually independent.
Since the sample is one realization of a graph, the density function in \eqref{Eq:MaxEntDist} is also the likelihood function.
We can see that
the solution to $-\nabla z(\boldsymbol{\theta}) = \mathbf{d}$ is the maximum likelihood estimator (MLE) of $\theta$.

We now consider the asymptotic distributions of the MLEs as the number of parameters goes to infinity.
Let $V_n=(v_{ij})_{i,j=1,\ldots, n}$
be the Fisher information matrix of the parameters $\theta_1, \ldots, \theta_n$. It can be written as
\[
V_n= \frac{\partial^2 z(\boldsymbol{\theta})}{\partial \boldsymbol{\theta} \partial \boldsymbol{\theta}^T}.
\]
For three common types of weights as introduced in Section 1, $V_n$ is the diagonal
dominant matrix with nonnegative entries. This property is crucially used in the proof of the central limit theorem
on the MLE.

\subsection{Finite discrete weights}
When network edges take finite discrete weights, we assume $\Omega = \{0,1, \ldots, q-1\}$ with $q$ a fixed integer.
In this case, $\nu$ is the counting measure
and the edge weights $a_{ij}$ are independent multinomial random variables with
the probability:
\[
P(a_{ij}=a) = \frac{  e^{a(\theta_i + \theta_j)}  }{ \sum_{k=0}^{q-1} e^{k(\theta_i+\theta_j)} }, ~~a=0, 1, \ldots, q-1.
\]
This model is a direct generalization of the $\beta$-model that only consider the dichotomous edges.
The normalizing constant is
\begin{equation*}
z(\boldsymbol{\theta})=\sum_{1\le i < j \le n} \log \sum_{a=0}^{q-1} e^{-(\theta_i+\theta_j)a},
\end{equation*}
and the parameter space is $\Theta=R^n$.
Let $\bs{\widehat{\theta}}=(\theta_1, \ldots, \theta_n)^T$ be the MLE of $\bs{\theta}=(\theta_1, \ldots, \theta_n)^T$.
The likelihood equations are
\begin{equation}\label{eq:likelihood-finite}
d_i  =  \sum_{j=1; j\neq i}^n \sum_{a=0}^{q-1}
\frac{  ae^{a(\widehat{\theta}_i + \widehat{\theta}_j)}  }{ \sum_{k=0}^{q-1} e^{k(\widehat{\theta}_i+\widehat{\theta}_j)} } ,~~~i=1,\ldots, n, \\
\end{equation}
which is identical to the moment estimating equations.
The fixed point iteration algorithm by Chatterjee, Diaconis and Sly (2011) or the minorization-maximization algorithm by Hunter (2004)
can be used to solve the above system of equations or analogous problems in the next two subsections.
Following Hillar and Wibisono (2013), we assume that $\max_{1\le i\le n}|\theta_i|$
is bounded by a constant when considering the asymptotic distribution of the MLE.
The central limit theorem for the MLE is stated as follows, whose proof is given in Appendix A.

\begin{theorem}\label{Thm:Asym:Binary}
In the case of finite discrete weights,
the diagonal entries of $V_n$ has the following representation:
\[
v_{ii}=\sum_{j=1; j\neq i}^n
\frac{\sum_{0\le k< l\le q-1}(k-l)^2e^{(k+l)(\theta_i+\theta_j)} }{ (\sum_{a=0}^{q-1} e^{a(\theta_i+\theta_j)})^2 },~~i=1, \ldots, n.
\]
Assume that $\max_{1\le i\le n}|\theta_i|$ is bounded by a fixed constant.
Then for any fixed $r\ge 1$, the vector $(v_{11}^{1/2}(\hat{\theta}_1-\theta_1), \ldots,
v_{rr}^{1/2}(\hat{\theta}_r-\theta_r))$ is asymptotically standard
multivariate normal as $n\to\infty$.
\end{theorem}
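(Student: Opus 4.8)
The first assertion is a direct moment computation. Because $z(\boldsymbol{\theta})=\sum_{i<j}\log\sum_{a=0}^{q-1}e^{-(\theta_i+\theta_j)a}$ is, edge by edge, the cumulant generating function of the independent weights $a_{ij}$, differentiation gives $v_{ij}=\partial^2 z/\partial\theta_i\partial\theta_j=\mathrm{Var}(a_{ij})\ge 0$ for $i\ne j$ and $v_{ii}=\sum_{j\ne i}\mathrm{Var}(a_{ij})$. Writing the variance of the categorical variable $a_{ij}$ in symmetrized form,
\[
\mathrm{Var}(a_{ij})=\tfrac12\sum_{k,l}(k-l)^2 P(a_{ij}=k)P(a_{ij}=l)=\sum_{0\le k<l\le q-1}(k-l)^2 P(a_{ij}=k)P(a_{ij}=l),
\]
and substituting the multinomial probabilities yields the stated expression for $v_{ii}$.

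For the asymptotic normality I would adopt the approximate-inverse strategy of Yan and Xu (2013). The computation above shows that $V_n$ is symmetric and diagonally dominant with $v_{ii}=\sum_{j\ne i}v_{ij}$, and, since $\max_i|\theta_i|$ is bounded and $q\ge 2$, every $v_{ij}=\mathrm{Var}(a_{ij})$ lies in a fixed interval $[c,C]$ with $c>0$. Thus $V_n$ falls in the matrix class for which Yan and Xu construct the explicit approximate inverse $S=\diag(v_{11}^{-1},\dots,v_{nn}^{-1})+v_{\cdot\cdot}^{-1}\mathbf{1}\mathbf{1}^T$, where $v_{\cdot\cdot}=\sum_{i,j}v_{ij}=O(n^2)$, obeying $\|V_n^{-1}-S\|_{\max}=O(n^{-2})$ with constants depending only on $c,C$.

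The heart of the proof is a second-order expansion of the likelihood equations. Put $\bar d_i=d_i-\E d_i=\sum_{j\ne i}(a_{ij}-\E a_{ij})$. The MLE solves $-\partial z/\partial\theta_i(\hat{\boldsymbol{\theta}})=d_i$ while $-\partial z/\partial\theta_i(\boldsymbol{\theta})=\E d_i$, so expanding around the true $\boldsymbol{\theta}$ and using $\partial^2 z/\partial\theta_i\partial\theta_j=v_{ij}$ gives $\bar{\mathbf{d}}=-V_n(\hat{\boldsymbol{\theta}}-\boldsymbol{\theta})+\tfrac12 h$, where $h$ is the quadratic remainder assembled from the uniformly bounded third derivatives of $z$; by the product structure of $z$ one checks $|h_i|=O(n\|\hat{\boldsymbol{\theta}}-\boldsymbol{\theta}\|_\infty^2)$. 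Consequently
\[
\hat{\boldsymbol{\theta}}-\boldsymbol{\theta}=-S\bar{\mathbf{d}}-(V_n^{-1}-S)\bar{\mathbf{d}}+\tfrac12 V_n^{-1}h,
\]
and the $i$th coordinate of the leading term is $-\bar d_i/v_{ii}-v_{\cdot\cdot}^{-1}\sum_k\bar d_k$. The second summand has standard deviation $O(n^{-1})$ because $\mathrm{Var}(\sum_k\bar d_k)=O(n^2)$ and $v_{\cdot\cdot}=O(n^2)$; after scaling by $v_{ii}^{1/2}=O(n^{1/2})$ it is $O_P(n^{-1/2})$ and hence negligible, leaving $-\bar d_i/v_{ii}^{1/2}$.

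It remains to prove the CLT for the leading term and to discard the two error terms. As $\bar d_i$ is a sum of $n-1$ independent uniformly bounded centered variables with total variance $v_{ii}\gtrsim n$, the Lyapunov condition holds and $\bar d_i/v_{ii}^{1/2}\Rightarrow N(0,1)$; for the joint law over $i=1,\dots,r$ I would invoke the Cram\'er--Wold device, noting that $\mathrm{Cov}(\bar d_i/v_{ii}^{1/2},\bar d_k/v_{kk}^{1/2})=\mathrm{Var}(a_{ik})/(v_{ii}v_{kk})^{1/2}=O(n^{-1})\to 0$, so the limiting covariance is $I_r$. The error $v_{ii}^{1/2}[(V_n^{-1}-S)\bar{\mathbf{d}}]_i$ has variance $O(n^{-1})\to 0$ by the approximate-inverse bound together with $\mathrm{Var}(\bar d_k)=O(n)$. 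The main obstacle is the remainder: writing $V_n^{-1}h=Sh+(V_n^{-1}-S)h$ and using the diagonal-plus-rank-one form of $S$ gives $v_{ii}^{1/2}|[V_n^{-1}h]_i|=O_P(n^{1/2}\|\hat{\boldsymbol{\theta}}-\boldsymbol{\theta}\|_\infty^2)$, so one needs the explicit rate $\|\hat{\boldsymbol{\theta}}-\boldsymbol{\theta}\|_\infty=O_P(\sqrt{\log n/n})$ — sharper than the bare consistency of Hillar and Wibisono and obtainable from the concentration $\max_i|\bar d_i|=O_P(\sqrt{n\log n})$ — which makes $n^{1/2}\|\hat{\boldsymbol{\theta}}-\boldsymbol{\theta}\|_\infty^2=O_P(\log n/n^{1/2})=o_P(1)$. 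Collecting these reductions identifies the limit law of $v_{ii}^{1/2}(\hat\theta_i-\theta_i)$ with that of $-\bar d_i/v_{ii}^{1/2}$, giving the claimed standard multivariate normal limit.
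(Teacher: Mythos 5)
Your proposal is correct and follows essentially the same route as the paper: the variance identity for $v_{ii}$, the Yan--Xu approximate inverse of the Fisher information matrix, a second-order Taylor expansion of the likelihood equations whose remainder is controlled through the uniform rate $\|\boldsymbol{\hat\theta}-\boldsymbol{\theta}\|_\infty=O_p(\sqrt{\log n/n})$, and a Lyapunov/Cram\'er--Wold argument for the centered degrees. The only differences are cosmetic: you retain the rank-one term in the approximate inverse and discard it by a separate variance bound (the paper works with the purely diagonal $S_n$ and absorbs that term into the approximation error), and the $\sqrt{\log n/n}$ rate you propose to rederive from concentration of $\max_i|d_i-\E d_i|$ is exactly what the consistency theorem of Hillar and Wibisono already supplies, which is what the paper invokes.
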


Notice that $\hat{\theta}_1, \ldots, \hat{\theta}_r$ are asymptotically mutually independent by the above theorem.
This is due to that the maximum entropy models imply the mutually independent edges
in a graph. It further implies that $V_n^{-1}$ is an approximate diagonal matrix shown in Proposition \ref{Prop:Matrix:Inverse} such that
$\hat{\theta}_1, \ldots, \hat{\theta}_r$ are asymptotically mutually independent.

\subsection{Continuous weights}\label{Sec:Weight-R}
When network edges take continuous weights,
$\Omega = [0, \infty)$, $\nu$ is the Lebesgue measure on $[0, \infty)$ and the normalizing constant is
\begin{equation*}
z(\boldsymbol{\theta})=-\sum_{1\le i< j\le n} \log (\theta_i+\theta_j).
\end{equation*}
Therefore, the corresponding natural parameter space is
\begin{equation*}
\Theta = \{(\theta_1, \dots, \theta_n) \in \R^n \colon \theta_i+\theta_j > 0 \text{ for } i \neq j\}.
\end{equation*}
The edge weights $a_{ij}$ ($1\le i< j\le n$) are independently distributed by exponential distributions with density
\begin{equation*}
p(a ) = (\theta_i+\theta_j) \: \exp\big(-(\theta_i+\theta_j) a \big),~~a>0,~~1\le i< j\le n,
\end{equation*}
whose expectation is $(\theta_i+\theta_j)^{-1}$.
The likelihood equations are
\begin{equation*}
d_i= \sum_{j\neq i} \frac{1}{ \hat{\theta}_i + \hat{\theta}_j },~~~~i=1,\ldots, n.
\end{equation*}
The asymptotic distribution of the MLE is stated as follows, whose proof is given in Appendix B.

\begin{theorem}\label{Thm:Asym:Cont}
Let $L_n = \min_{i \neq j} (\theta_i + \theta_j)>0$ and $M_n = \max_{i \neq j} (\theta_i + \theta_j)$.
In the case of continuous weights, the diagonal entries of $V_n$ are:
\[
v_{ii}=\sum_{j=1,j\neq i}^n \frac{ 1 }{(\theta_i + \theta_j)^2},~~i=1,\ldots,n.
\]
If $M_n/L_n=o\{ n^{1/16}/(\log n)^{1/8} \}$, then for any fixed $r\ge 1$, the vector $(v_{11}^{1/2}(\hat{\theta}_1-\theta_1), \ldots,
v_{rr}^{1/2}(\hat{\theta}_r-\theta_r))$ is asymptotically standard multivariate normal as
$n\to\infty$.
\end{theorem}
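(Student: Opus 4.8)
The plan is to turn the likelihood equations into a one-step (Newton-type) expansion of $\widehat{\bs\theta}-\bs\theta$ about the truth, to argue that only the diagonal part of $V_n^{-1}$ matters for the leading behaviour, and then to apply a Lyapunov central limit theorem to the centered degrees while absorbing a second-order remainder by means of the approximate inverse in Proposition~\ref{Prop:Matrix:Inverse}.

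First I would record the exact moment structure. Since $a_{ij}$ is exponential with rate $\theta_i+\theta_j$, we have $\E d_i=\sum_{j\ne i}(\theta_i+\theta_j)^{-1}$, $\mathrm{Var}(d_i)=v_{ii}$ and $\mathrm{Cov}(d_i,d_k)=v_{ik}=(\theta_i+\theta_k)^{-2}$. Writing $\mathbf g=(g_1,\dots,g_n)^T$ with $g_i=d_i-\E d_i$, and $F_i(\bs\theta)=\sum_{j\ne i}(\theta_i+\theta_j)^{-1}$, the likelihood equations read $F(\widehat{\bs\theta})=\mathbf d$ while $F(\bs\theta)=\E\mathbf d$, and the Jacobian of $F$ is $-V_n$. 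A Taylor expansion of $F$ about $\bs\theta$ then yields
\begin{equation*}
\widehat{\bs\theta}-\bs\theta = -V_n^{-1}\mathbf g + V_n^{-1} R,
\end{equation*}
where $R$ is quadratic in $\widehat{\bs\theta}-\bs\theta$ with coefficients the third derivatives of $z$, each of order at most $L_n^{-3}$.

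Next I would replace $V_n^{-1}$ by its diagonal. Because $v_{ii}=\sum_{j\ne i}v_{ij}$ with $M_n^{-2}\le v_{ij}\le L_n^{-2}$, the matrix $V_n$ lies in the class treated by Proposition~\ref{Prop:Matrix:Inverse}, whose approximate inverse $S=\diag(v_{11}^{-1},\dots,v_{nn}^{-1})$, up to a negligible rank-one correction, satisfies a uniform entrywise bound. Hence $v_{ii}^{1/2}(V_n^{-1}\mathbf g)_i=g_i/v_{ii}^{1/2}+o_p(1)$, and the leading term of $v_{ii}^{1/2}(\hat\theta_i-\theta_i)$ is $-g_i/v_{ii}^{1/2}=-\sum_{j\ne i}(a_{ij}-\E a_{ij})/v_{ii}^{1/2}$, a normalized sum of $n-1$ independent centered summands of total variance one (whose limiting law, being symmetric, is unaffected by the sign). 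For the joint limit I would use the Cram\'er--Wold device: fixing $\mathbf c\in\R^r$ and applying the Lyapunov theorem to $\sum_{i\le r}c_i g_i/v_{ii}^{1/2}$, the third-moment ratio is of order $(M_n/L_n)^3 n^{-1/2}\to0$, so Lyapunov's condition holds. Since the cross-correlations obey $v_{ik}/(v_{ii}v_{kk})^{1/2}\le M_n^2/\{L_n^2(n-1)\}\to0$ (because $M_n^2/L_n^2=o(n)$ under the hypothesis), the limiting covariance is $I_r$, which yields the asserted asymptotic independence of $\hat\theta_1,\dots,\hat\theta_r$.

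The main obstacle is showing that the remainder $v_{ii}^{1/2}(V_n^{-1}R)_i$ is $o_p(1)$. This requires a sharp rate of consistency, $\|\widehat{\bs\theta}-\bs\theta\|_\infty$ of order roughly $M_n^2(\log n)^{1/2}/(L_n n^{1/2})$, which I would import from \cite{Hillar:Wibisono:2013} or re-derive through the same expansion. The remainder $R_i$ is a quadratic form in $\widehat{\bs\theta}-\bs\theta$ whose coefficients aggregate to $O(n/L_n^3)$; combining this with $v_{ii}^{-1}\asymp M_n^2/n$ and the normalization $v_{ii}^{1/2}\asymp n^{1/2}/L_n$, and substituting the consistency rate, produces a bound that is a fixed power of $M_n/L_n$ times a power of $\log n$ divided by a power of $n$. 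Forcing this bound to vanish is precisely what the growth condition $M_n/L_n=o\{n^{1/16}/(\log n)^{1/8}\}$ buys; the delicate bookkeeping of these exponents, carried out uniformly over $i$ and reconciled with the matrix-approximation error, is the crux and the step I expect to be hardest.
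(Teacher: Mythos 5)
Your outline follows essentially the same route as the paper's Appendix B: the same Taylor expansion of the moment equations giving $\widehat{\bs\theta}-\bs\theta=-V_n^{-1}\mathbf{g}+V_n^{-1}R$ (the paper's equation \eqref{represent}), the same diagonal approximation of $V_n^{-1}$ via Proposition~\ref{Prop:Matrix:Inverse} (the paper's Lemma~\ref{lemma:matrix-continuous}), a Lyapunov central limit theorem for the centered degrees (the paper's Proposition~\ref{pro:central:continuous}; your third-moment bound of order $(M_n/L_n)^3n^{-1/2}$ is cruder than the paper's $(M_n/L_n)n^{-1/2}$ but suffices), and the same import of the Hillar--Wibisono consistency rate (Theorem~\ref{Thm:Asym:Cont:con}) to control the quadratic remainder (the paper's Lemma~\ref{concentral-lemma2}). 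The final exponent bookkeeping you defer is indeed where the condition $M_n/L_n=o\{n^{1/16}/(\log n)^{1/8}\}$ enters, exactly as in the paper.

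There is, however, one genuine gap. You assert that the entrywise bound on $W_n:=V_n^{-1}-S_n$ from Proposition~\ref{Prop:Matrix:Inverse} by itself yields $v_{ii}^{1/2}(V_n^{-1}\mathbf{g})_i=g_i/v_{ii}^{1/2}+o_p(1)$. That inference fails if carried out by the triangle inequality: with $\|W_n\|\le cM_n^2/\{L_n^3(n-1)^2\}$ and $\E\sum_j|g_j|\le\sum_j v_{jj}^{1/2}\lesssim n^{3/2}/L_n$, one only gets $v_{ii}^{1/2}\,\E|(W_n\mathbf{g})_i|\lesssim (n^{1/2}/L_n)\cdot\|W_n\|\cdot n^{3/2}/L_n\asymp M_n^2/L_n^{5}$, which after the scale normalization $L_n=1$ (the problem is scale invariant) is of order $(M_n/L_n)^2$ and does \emph{not} tend to zero under the stated hypothesis. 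To make this term negligible you must exploit the independence of the edges and bound the \emph{variance}: since $V_n=\mbox{cov}(\mathbf{d})$, one has $\mathrm{Var}\{(W_n\mathbf{g})_i\}=(W_nV_nW_n^T)_{ii}$, and either the crude estimate $(W_nV_nW_n^T)_{ii}\le\|W_n\|^2\sum_{j,k}v_{jk}$ or the paper's identity $W_nV_nW_n^T=(V_n^{-1}-S_n)-S_n(I_n-V_nS_n)$ (Lemma~\ref{con-central1-lemma1}) shows $v_{ii}\,\mathrm{Var}\{(W_n\mathbf{g})_i\}\to0$, after which Chebyshev's inequality gives the desired $o_p(1)$. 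This second-moment step is precisely the lemma your sketch omits; once it is added, your argument coincides with the paper's proof.
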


\subsection{Infinite discrete weights}
When edges take infinite discrete weights, we assume that $\Omega = \{0, 1, \ldots \}$.
In this case, $\nu$ is the counting measure, the normalizing constant is
\begin{equation*}
z(\boldsymbol{\theta})= \sum_{1\le i < j \le n} \log \sum_{a = 0}^\infty \exp(-(\theta_i+\theta_j)a)
=\sum_{1\le i < j \le n}-\log\big( 1-\exp(-(\theta_i+\theta_j)),
\end{equation*}
and the natural parameter space is
\begin{equation*}
\Theta = \{(\theta_1,\dots,\theta_n) \in \R^n \colon \theta_i+\theta_j > 0 \text{ for } i \neq j\}.
\end{equation*}
The edge weights $a_{ij}$ are independent geometric random variables with probability mass function:
\begin{equation*}
\P^\ast(a_{ij}=a) = \big(1-\exp(-\theta_i-\theta_j)\big) \: \exp\big( -(\theta_i+\theta_j) a \big), ~~1\le i< j\le n.
\end{equation*}
The likelihood equations are
\begin{equation*}
d_i = \sum_{j \neq i} \frac{1}{\exp(\hat{\theta}_i+\hat{\theta}_j)-1}, ~~ i = 1,\dots,n,
\end{equation*}
which are identical to the moment estimating equations.

\begin{theorem}\label{Thm:ConsistencyDisc}
Let $L_n = \min_{i \neq j} (\theta_i + \theta_j)>0$ and $M_n = \max_{i \neq j} (\theta_i + \theta_j)$.
In the case of infinite discrete weights, the diagonal entries of $V_n$ are
\[
v_{ii} =\sum_{j=1;j\neq i}^n \frac{e^{\theta_i+\theta_j}}{(e^{\theta_i+\theta_j}-1)^2 }, ~~i=1,\ldots, n.
\]
If $e^{17M_n}/ L_n^3 =o\{n^{1/2}/\log n \}$, then for any fixed $r\ge 1$, the vector $(v_{11}^{1/2}(\hat{\theta}_1-\theta_1), \ldots,
v_{rr}^{1/2}(\hat{\theta}_r-\theta_r))$ is asymptotically standard
multivariate normal as $n\to\infty$.
\end{theorem}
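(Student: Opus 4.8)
The plan is to follow the same route as in Theorems~\ref{Thm:Asym:Binary} and~\ref{Thm:Asym:Cont}: linearize the likelihood equations around the truth and replace $V_n^{-1}$ by the explicit approximate inverse of Proposition~\ref{Prop:Matrix:Inverse}. First I would record the full structure of $V_n$. A direct differentiation of $z(\bs\theta)$ shows that for $i\neq j$ the off-diagonal entry is $v_{ij}=e^{\theta_i+\theta_j}/(e^{\theta_i+\theta_j}-1)^2=\mathrm{Var}(a_{ij})$, while $v_{ii}=\sum_{j\neq i}v_{ij}$. Hence $V_n$ lies in the matrix class to which Proposition~\ref{Prop:Matrix:Inverse} applies, with off-diagonal entries bounded below by $m:=g(M_n)$ and above by $M:=g(L_n)$, where $g(x)=e^x/(e^x-1)^2$ is decreasing; note $m\asymp e^{-M_n}$ as $M_n\to\infty$ and $M\asymp L_n^{-2}$ as $L_n\to0$. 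This yields the explicit $S=\mathrm{diag}(v_{11}^{-1},\dots,v_{nn}^{-1})+v_{\cdot\cdot}^{-1}\bs{1}\bs{1}^T$, with $v_{\cdot\cdot}=\sum_k v_{kk}$, together with a max-norm bound on $V_n^{-1}-S$ expressed through $m,M,n$.

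Next I would Taylor-expand the score $s:=\mathbf d-\E\mathbf d$ about $\bs\theta$. Writing $b_i(\bs\theta)=\sum_{j\neq i}(e^{\theta_i+\theta_j}-1)^{-1}$ for the expected degree, the likelihood equation $b(\widehat{\bs\theta})=\mathbf d$ gives $s=-V_n(\widehat{\bs\theta}-\bs\theta)+\tfrac12 R$, where $R_i=(\widehat{\bs\theta}-\bs\theta)^T H_i(\widehat{\bs\theta}-\bs\theta)$ and $H_i=\partial^2 b_i/\partial\bs\theta\partial\bs\theta^T$ is evaluated at an intermediate point. Inverting and inserting $S$ gives
\[
\widehat{\bs\theta}-\bs\theta=-Ss+(S-V_n^{-1})s+\tfrac12 V_n^{-1}R.
\]
The leading coordinate is $v_{ii}^{1/2}(-Ss)_i=-s_i/v_{ii}^{1/2}-v_{ii}^{1/2}v_{\cdot\cdot}^{-1}\sum_k s_k$. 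Since $\sum_k s_k=2\sum_{i<j}(a_{ij}-\E a_{ij})$ has variance $2v_{\cdot\cdot}$ and $v_{ii}/v_{\cdot\cdot}=O(1/n)$, the second piece is $O_p(n^{-1/2})=o_p(1)$, so the problem reduces to the joint limit of $(-s_1/v_{11}^{1/2},\dots,-s_r/v_{rr}^{1/2})$. For this main term I would use the Cram\'er--Wold device and a Lyapunov central limit theorem for independent summands: for fixed $c\in\R^r$, the combination $\sum_{i\le r}c_i s_i/v_{ii}^{1/2}$, regrouped over edges, is a sum of independent centered variables whose variance tends to $\sum_i c_i^2$ (the cross-covariances arise only from the shared edges $a_{ij}$, $i,j\le r$, and are $O(v_{ij}/\sqrt{v_{ii}v_{jj}})=O(1/n)$), and Lyapunov's ratio vanishes because each $v_{ii}\to\infty$ while the third central moments of the geometric weights are controlled by $L_n^{-3}$. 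This gives the standard multivariate normal limit with the asymptotic independence noted after Theorem~\ref{Thm:Asym:Binary}.

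The hard part is showing that the two remainder terms are $o_p(v_{ii}^{-1/2})$, and this is where the growth condition $e^{17M_n}/L_n^3=o(n^{1/2}/\log n)$ enters. Controlling $V_n^{-1}R$ requires a sharp $\ell_\infty$ rate for $\widehat{\bs\theta}-\bs\theta$, which I would establish by a Newton--Kantorovich / fixed-point argument built on the approximate inverse $S$, together with a maximal inequality giving $\|s\|_\infty=O_p((\max_i v_{ii})^{1/2}\sqrt{\log n})$. The entries of $H_i$ are bounded by $|h''|\lesssim L_n^{-3}$ with $h(t)=(e^t-1)^{-1}$, so $|R_i|\lesssim nL_n^{-3}\|\widehat{\bs\theta}-\bs\theta\|_\infty^2$, and after applying $S$ and multiplying by $v_{ii}^{1/2}$ one needs $\sqrt n\,L_n^{-2}\|\widehat{\bs\theta}-\bs\theta\|_\infty^2\to0$. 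The term $(S-V_n^{-1})s$ is handled by the max-norm bound of Proposition~\ref{Prop:Matrix:Inverse} tested against $\|s\|_1$. The whole difficulty is bookkeeping: every bound degrades as $L_n\to0$ (moments and derivatives of the geometric weight blow up like powers of $L_n^{-1}$) and as $M_n\to\infty$ (the lower bound $m$ on the off-diagonal entries decays like $e^{-M_n}$, inflating the approximation error through $m^{-3}$). The stated condition is precisely the regime in which all three error terms are $o_p(v_{ii}^{-1/2})$, after which a Slutsky argument combines the pieces to conclude.
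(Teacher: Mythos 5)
Your skeleton is the same as the paper's: linearize the likelihood equations, split $V_n^{-1}$ into a diagonal approximation plus an error term via Proposition~\ref{Prop:Matrix:Inverse}, prove a Lyapunov CLT for the degrees (the paper's Proposition~\ref{pro:central:discrete}), and use an $\ell_\infty$ consistency rate to control the quadratic remainder. (One remark on economy: the paper does not re-derive the $\ell_\infty$ rate by a Newton--Kantorovich argument as you propose; it simply invokes the consistency theorem of Hillar and Wibisono, restated as Theorem~\ref{Thm:ConsistencyDisc-con}, so that part of your plan is unnecessary work, though not a flaw.) However, two steps of your remainder analysis would genuinely fail as written.

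First, handling the middle term $(S-V_n^{-1})s$ by ``the max-norm bound of Proposition~\ref{Prop:Matrix:Inverse} tested against $\|s\|_1$'' is too lossy. The max-norm error is of order $e^{3M_n}/(L_n^4 n^2)$ (Lemma~\ref{lemma:matrix-discrete}; the rank-one part of your $S$ changes nothing at this order), while $\E\|s\|_1 \asymp n \max_j v_{jj}^{1/2}$, which can be of order $n^{3/2}/L_n$. So this route yields $v_{ii}^{1/2}\,|((S-V_n^{-1})s)_i| = O_p\bigl(e^{3M_n}/L_n^6\bigr)$ after inserting $v_{ii}\le (n-1)g(L_n)\asymp n/L_n^2$. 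Take all $\theta_i = \tfrac12 n^{-1/10}$: then $M_n=L_n=n^{-1/10}$, the hypothesis $e^{17M_n}/L_n^3 = o(n^{1/2}/\log n)$ holds, yet your bound diverges like $n^{3/5}$. The paper's device (Lemma~\ref{dis-central1-lemma1}, modeled on Lemma~\ref{lemma:binary:2}) is a second-moment argument: $\mathrm{cov}[W_n s] = W_n V_n W_n^T = (V_n^{-1}-S_n) - S_n(I_n - V_n S_n)$, whose entries are $O(n^{-2})$ up to powers of $e^{M_n}$ and $L_n^{-1}$, so Chebyshev gives $(W_n s)_i = O_p(\|U_n\|^{1/2})$ --- in the example above this is $O_p(n^{-1/5})$ after rescaling by $v_{ii}^{1/2}$. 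The cancellation among the centered, independent edge variables is exactly what the $\ell_1$ bound throws away, and it is indispensable here. Second, your requirement ``$\sqrt{n}\,L_n^{-2}\|\hat{\theta}-\theta\|_\infty^2 \to 0$'' for the quadratic term implicitly uses $v_{ii}\gtrsim n L_n^{-2}$; but, as you yourself note, $g(t)=e^t/(e^t-1)^2$ is decreasing, so $nL_n^{-2}$ is the \emph{upper} bound on $v_{ii}$, while the guaranteed lower bound is $v_{ii}\ge (n-1)g(M_n)\asymp n e^{-M_n}$. The correct requirement is of order $\sqrt{n}\,e^{M_n/2}L_n^{-3}\|\hat{\theta}-\theta\|_\infty^2\to 0$, i.e., an extra factor $e^{M_n/2}/L_n$, and absorbing precisely such factors is where the large exponents in $e^{17M_n}/L_n^3$ get spent (compare the explicit bounds $O\{e^{15M_n}\log n/(nL_n^3)\}$ and $O\{e^{17M_n}\log n/(nL_n^3)\}$ in the proof of Lemma~\ref{dis-central-lemma2}). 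Consequently, your closing claim that the stated condition ``is precisely the regime in which all three error terms vanish'' is asserted rather than verified, and with your two bounds taken at face value it would be false.
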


\begin{remark}
By Theorems \ref{Thm:Asym:Binary}--\ref{Thm:ConsistencyDisc}, we have: (1) for any fixed $r$,
$\hat{\theta}_1$, $\ldots$, $\hat{\theta}_r$ are asymptotically independent; (2) as $n\rightarrow \infty$, the
convergence rate of $\hat{\theta}_i$ is $1/v_{ii}^{1/2}$. If $M_n$ and $L_n$ are constants, then this convergence rate is
in the magnitude of $n^{-1/2}$; otherwise it is
between $O\{1/(n^{1/2}L_n)\}$ and  $O\{1/(n^{1/2}M_n)\}$ when edges take continuous weights,
between $O\{(e^{M_n}-1)/[n^{1/2}e^{-M_n/2}]\}$ and $O\{(e^{L_n}-1)/[n^{1/2}e^{-L_n/2}]\}$ when edges
take infinite discrete weights. To compare with the convergence rate in the continuous and infinite discrete cases,
we consider a special case $M_nL_n=1$.
Since $e^{M_n/2}(e^{M_n}-1)\gg M_n=L_n^{-1}$  and $(e^{L_n}-1)e^{L_n/2}\sim L_n =M_n^{-1}$
when $M_n$ is large enough, the former is faster than the latter.
This can be understood that a lower convergence rate can be incurred if the parameter vector is
more quickly close to the boundary of the mean parameter space by noting that $\E(d_i)=\sum_{j\neq i} (\theta_i+\theta_j)^{-1}$ in the continuous case and $\E(d_i)=\sum_{j\neq i}(e^{\theta_i+\theta_j}-1)^{-1}$ in the infinite discrete case.
\end{remark}

\begin{remark}
In contrast with the conditions (i.e., $M_n^2/L_n=o(n^{1/2}/(\log n)^{1/2})$ in the continuous case and
$e^{5M_n}(e^{L_n/2}-1)^{-1/2}=o(n^{1/2}/(\log n)^{1/2})$ in the infinite discrete case)
guaranteeing the consistency of the MLE by Hillar and Wibisono (2013),
the ones for asymptotic normality seems much more strict.
The simulations in the next section suggest there may be space for improvement.
On the other hand, the consistency and asymptotic normality
for the MLE in the finite discrete case requires the assumption that all parameters are bounded by a constant.
This assumption may not be best possible. We will investigate these problems in the future.
\end{remark}

\begin{remark}
The three theorems in this section only describe the joint asymptotic distribution of
the first $r$ estimators $\hat{\theta}_1, \ldots, \hat{\theta}_r$ with
a fixed constant $r$. Actually, the starting point of subscripts is not essential. These three theorems hold for any
fixed $r$ MLEs. Since the usual counting subscript starts from $1$, we only show the case presented in the theorems.
Our proofs can be directly extended to the case of any $r$ fixed MLEs $\hat{\theta}_{i_1}, \ldots, \hat{\theta}_{i_r}$
without any difficulty.
Another interesting problem is investigating the asymptotic distribution of the linear combination $\sum_{i=1}^n c_i\hat{\theta}_i$
on all the MLEs or a linear combination with a growing number of the MLEs as pointed by one referee.
Are there results similar to Propositions \ref{pro:central:binary}--\ref{pro:central:discrete} (2)? We will investigate this problem in future work.
\end{remark}

\begin{remark}
According to Theorems \ref{Thm:Asym:Binary}--\ref{Thm:ConsistencyDisc},
an approximate $100(1-\alpha)\%$ confidence interval for $\theta_i - \theta_j$ is
$\hat{\theta}_i - \hat{\theta}_j \pm Z_{1-\alpha/2}(1/\hat{v}_{ii}+1/\hat{v}_{jj})^{1/2}$, where $\hat{v}_{ii}$ and
$\hat{v}_{jj}$ are the natural estimates of $v_{ii}$ and $v_{jj}$ by replacing all $\theta_1, \ldots, \theta_n$
by their MLEs, and $Z_{\beta}$ denotes the $100\beta$ percentile point of the standard normal distribution.
To test whether $\theta_i=\theta_j$ at level $\alpha$, the hypothesis can be rejected if
$|\hat{\theta}_i-\hat{\theta}_j|> Z_{1-\alpha/2}(1/\hat{v}_{ii}+1/\hat{v}_{jj})^{1/2}$.
The confidence intervals for contrasts and the hypothesis test for the equality of two parameters
can be generalized to multiple parameters.
For example, one can use the test statistic
\[
(\hat{\theta}_1 -\hat{\theta}_2, \hat{\theta}_2 -\hat{\theta}_3, \hat{\theta}_3-\hat{\theta}_4)
\begin{pmatrix}
\frac{1}{\hat{v}_{11}}+\frac{1}{\hat{v}_{22}} & \frac{-1}{\hat{v}_{22}} & 0 \\
\frac{-1}{\hat{v}_{22}} & \frac{1}{\hat{v}_{22}}+\frac{1}{\hat{v}_{33}} & \frac{-1}{\hat{v}_{33}} \\
0 & \frac{-1}{\hat{v}_{33}}  & \frac{1}{\hat{v}_{33}}+\frac{1}{\hat{v}_{44}}
\end{pmatrix}^{-1}
\begin{pmatrix}
\hat{\theta}_1 -\hat{\theta}_2 \\ \hat{\theta}_2 -\hat{\theta}_3 \\ \hat{\theta}_3-\hat{\theta}_4
\end{pmatrix}
\]
to test whether $\theta_1=\theta_2=\theta_3=\theta_4$, which asymptotically follows the chi-square
distribution with the degree of freedom $3$.
\end{remark}

\section{Simulations}

In this section, we will evaluate the asymptotic results for maximum entropy models on weighted graphs
with continuous and infinite discrete weights through numerical simulations.
The simulation results for finite discrete weights are similar to the binary case, which has been shown in Yan and Xu (2013), so we do not repeat it here.
Firstly, we study the consistency of the estimation. We plot the estimated $\hat{\theta}$ vs $\theta$ to evaluate the
accuracy. Secondly, by Theorems \ref{Thm:Asym:Cont} and \ref{Thm:ConsistencyDisc}, $\hat{v}_{ii}^{1/2}(\hat{\theta}_i-\theta_i)$
and $(\hat{\theta}_i+\hat{\theta}_j-\theta_i-\theta_j)/(1/\hat{v}_{ii}+1/v_{ii})^{1/2}$
are asymptotically normally distributed, where $\hat{v}_{ii}$ is the estimator of $v_{ii}$
by replacing $\theta_i$ with $\hat{\theta}_i$.
The quantile-quantile (QQ) plots of $\hat{v}_{ii}^{1/2}(\hat{\theta}_i-\theta_i)$ are shown.
We also report the $95\%$ coverage
probabilities for certain $\theta_i-\theta_j$, as well as the
probabilities that the maximum likelihood estimator does not exist in the case of discrete weights. The parameter settings in simulation studies are listed as follows.
For continuous weights, let $\theta_i=\widetilde{M} + i\widetilde{M}^2/n,~i=1,\ldots, n$
such that $L_n\approx \widetilde{M}$, $M_n\approx \widetilde{M}^2$ and $M_n/L_n\approx \widetilde{M}$;
for discrete weights, let $\theta_i=0.1+ i\widetilde{M}/n,~i=1, \ldots, n$ such that $L_n\approx 0.1$, $M_n\approx \widetilde{M}$, and
$e^{M_n}/L_n\approx 10e^{\widetilde{M}}$. Here, we suppress the subscript $n$ of $\widetilde{M}$
in order to conveniently display the notations in the figures.
A variety of $\widetilde{M}$ are chosen: $\widetilde{M}=1$, $\log(n)$, $n^{1/2}$, $n$ for continuous weights;
$\widetilde{M}=0$, $\log(\log n)$, $(\log n)^{1/2}$, $\log n$ for discrete weights.

The plots of $\hat{\theta}_i$ vs $\theta_i$ are shown in Figure 1.
We used $\widetilde{M}=1$ in this figure for the case of discrete weights instead of $\widetilde{M}=0$
in order to make $\theta_i$ vary (When $\widetilde{M}=0$, all the $\theta_i$, $1\le i\le n$ equal to $0.1$).
The red lines correspond to the case that $\theta=\hat{\theta}$.
For each sub-figure, the first and second rows represent $n=100$ and $n=200$, respectively.
The first, second and third columns represent $\widetilde{M} =1, \log (n), n^{1/2}$ for continuous weights and
$\widetilde{M}=1, \log\log(n), (\log n)^{1/2}$ for discrete weights, respectively.
From this figure, we can see that as $n$ increases, the estimators become more close to the true parameters.
As $\widetilde{M}$ increases, $\max_i|\hat{\theta}_i - \theta_i|$ becomes much larger, indicting that
controlling the increasing rate of $M_n$ (or decreasing rate of $L_n$) is necessary.
For continuous weights, when $\widetilde{M}=n^{1/2}$, $\max_i|\hat{\theta}_i - \theta_i|$ are very large, exceeding $30$;
for discrete weights, when $\widetilde{M}=(\log (n))^{1/2}$, the points of $\hat{\theta}$ vs $\theta$ diverge, indicating that $\hat{\theta}_i$ may not be the consistent estimate of $\theta_i$ in this case. Therefore, the conditions to guarantee the consistency results
in Theorems \ref{Thm:Asym:Cont} and \ref{Thm:ConsistencyDisc} seem to be
reasonable.

The QQ plots in Figures \ref{figure-continuous-qq} and \ref{figure-discrete-qq}
are based on $5,000$ repetitions for each scenario.
The horizontal and vertical axes are the empirical and theoretical quantiles, respectively.
The red lines correspond to $y=x$.
The coverage frequencies are reported in Table \ref{Table:continuous:dis}.
When $\widetilde{M}=\log n$, the MLEs for the case of discrete weights do not exist with $100\%$ frequencies.
Therefore, the QQ plots for this case are not available.
In Figure \ref{figure-continuous-qq}, when $\widetilde{M}=1, \log n, n^{1/2}$, the sample quantiles coincide with the theoretical ones
very well (The plot of the case of $\widetilde{M}=1$ is similar to that of $\widetilde{M}=\log n$ and is not shown here).
On the other hand, when $\widetilde{M}=n$, the sample quantile of $\hat{v}_{11}^{1/2}(\hat{\theta}_1 -\theta_1)$
evidently deviates from the theoretical one. In this case, the estimated variances $\hat{v}_{ii}$ of $d_i$
are very small, approaching to zero. For example, when $n=200$ and $\widetilde{M}=n$,
the estimated $\hat{v}_{ii}$ is in the magnitude of $10^{-6}\sim 10^{-8}$, where
the central limit theorem cannot be expected according to the classical large sample theory.
In Figure \ref{figure-discrete-qq}, the approximation of asymptotic normality
is good when $\widetilde{M}=0$ and $\log (\log n)$; while there are notable derivations for $\hat{v}_{nn}^{1/2}(\hat{\theta}_n-\theta_n)$
when  $\widetilde{M}=(\log n)^{1/2}$.

In both cases of continuous and discrete weights of Table \ref{Table:continuous:dis},
the length of estimated confidence intervals increases as $\widetilde{M}$ becomes larger when $n$ is fixed.
In the case of continuous weights, when $\widetilde{M}=1, \log(n)$, the length of estimated
confidence intervals decreases as $n$ increases; but when $\widetilde{M}=n^{1/2}$ and $n$, it instead becomes larger.
This is because $v_{ii}$ (between $n(2\widetilde{M})^{-2}$ and $n/(4\widetilde{M}^4)$) goes to zero as $n$ increases
when $\widetilde{M}\ge n^{1/2}$, leading to a larger confidence interval.
In particular, when $\widetilde{M}=n$, some of them exceed $10000$, indicating an extremely inaccurate estimate, although
the corresponding coverage probabilities are close to $95\%$.
In the case of discrete weights, when $\widetilde{M}=0, \log(\log n)$ and $n\ge 100$,
the coverage frequencies are close to the nominal level; when $\widetilde{M}=(\log n)^{1/2}$, the coverage frequencies
of pair $(n-1, n)$ are higher than the nominal level; when $\widetilde{M}=\log n$ that greatly exceeds the condition of Theorem \ref{Thm:ConsistencyDisc},
the MLE almost does not exist.
These phenomena further suggest that controlling increasing rate of $M_n$ or decreasing rate of $L_n$
in Theorems \ref{Thm:Asym:Cont} and \ref{Thm:ConsistencyDisc} is necessary.

\section{Summary and discussion}
Investigating the asymptotic theories for the network models are open and challenging problems,
especially when the number of parameters increases with the size of network.
One reason is that network data are not a standard type of data. In a traditional statistical framework,
the number of parameters is fixed and the number of samples goes to infinity.
In the asymptotic scenario considered in this paper, the sample is only one realization of a random graph
and the number of parameters is identical to that of vertices.
However, 
the MLE in some simple undirected models with
the degree sequence as the exclusively natural sufficient statistics (i.e., the maximum entropy models)
have been derived.
As the number of parameters goes to infinity, we obtain the asymptotic normality of the MLE in the
maximum entropy models for a class of weighted edges, the proofs of which are in help of
the approximated inverses of the Fisher information matrix. We expect that the methods of our proofs
can be applied to other high-dimensional cases in which the Fisher information matrix are nonnegative and
diagonally dominant or other similar cases. For example, Perry and Wolfe (2012)
introduced a family of null models for network data in which the entries of the upper triangle matrix of $\mathbf{a}$
are assumed independent Bernoulli random variables with success probabilities
$\exp[ \theta_i + \theta_j + \varepsilon(\theta_i, \theta_j)]$ for $1\le i< j \le n$,
where $\varepsilon(\theta_i, \theta_j)$ are smooth functions on parameters $\theta_i$ and $\theta_j$.
By making some assumptions of the second derivative of $\varepsilon$, the Fisher information matrix of the parameters
also shares the similar properties like those in the maximum entropy models.

Finally, we shed some light on why the consistency and asymptotic normality of the MLE can be achieved in the maximum entropy models,
even though the dimension of parameters increases with the size of network and the sample is only one realization of
a random graph. First, in an undirected random graph,
it lurks with $n(n-1)/2$ random variables, which are higher order than the number of parameters. Second,
the Fisher information of each parameter are combinations of $n-1$ variances of $n-1$ random variables.
Under some conditions, it goes to infinity as $n$ increases. Third, the assumption of independently edges avoid the
degeneracy problem, unlike Markov dependent exponential random graphs (Frank and Strauss (1986)).
The model degeneracy problems of the exponential random graphs have received wide attention (e.g., Strauss, 1986; Snijders, 2002;
Handcock, 2003; Hunter and Handcock, 2006; Chatterjee and Diaconis, 2011; Schweinberger, 2011).
Moreover, considering the case that the number of parameters is fixed,
Shalizi and Rinaldo (2013) demonstrated that exponential random graph models are projective in the sense of that
the same parameters can be used for the full network and for any of its subnetworks simultaneously,
essentially only for those models with the assumption of dyadic independence, under which the consistency of the MLE
is available.

\section*{Appendix A}
For fixed $r$, the central limit theorem for the vector $(d_1, \ldots, d_r)$ can be easily derived
by noting that $d_1, \ldots, d_r$ are asymptotically independent.
In view of that $d_i$ is the sufficient statistic on $\theta_i$, $\hat{\theta}_i$ may be approximately represented as a function of $d_i$.
If this can be done, then the asymptotic distribution for the MLE may follow.
In order to establish the relationship between $\hat{\theta}_i$ and $d_i$, we will approximate
the inverse of a class of matrices. We say an $n\times n$ matrix $V_n=(v_{ij})$ belongs to a matrix class $\mathcal{L}_n(m, M)$ if
$V_n$ is a symmetric nonnegative matrix satisfying
\[
v_{ii}=\sum_{j=1;j\neq i}^n v_{ij};~~M\ge v_{ij}=v_{ji}\ge m >0,~~i\neq j.
\]
Yan and Xu (2013) have proposed to use
$\bar{S}_n=\mathrm{diag}(1/v_{11}, \ldots, 1/v_{nn})+ v_{..}^{-1}\mathbf{1}_n \mathbf{1}_n^\prime$
to approximate $V^{-1}_n$,  where
$\mathbf{1}_n$ is a vector of $n$ entries whose values are all of $1$ and $v_{..}=\sum_{i=1}^n v_{ii}$,
and obtained an upper bound on the approximate errors.
Here we use a simpler matrix $S_n=\mathrm{diag}(1/v_{11}, \ldots, 1/v_{nn})$ to approximate $V^{-1}_n$.
Let $\|A\|=\max_{i,j} |a_{ij}|$ for a general matrix $A=(a_{ij})$.
It is clear that $\|A+B\|\le \|A\|+\|B\|$ for two matrices $A$ and $B$, and
$\|\bar{S}_n-S_n \|=v_{..}^{-1} \le (mn(n-1))^{-1}$.
By Proposition A1 in Yan and Xu (2013), we have:
\begin{proposition}\label{Prop:Matrix:Inverse}
If $V_n\in \mathcal{L}_n(m, M)$, then for $n\ge 3$, the following holds:
\begin{eqnarray*} 
||W_n:=V^{-1}_n - S_n || & \le & \|V^{-1}_n - \bar{S}_n \| + \|\bar{S}_n-S_n \| \\
& \le & \frac{ M(nM+(n-2)m)}{2m^3(n-2)(n-1)^2}+\frac{1}{2m(n-1)^2}+\frac{1}{mn(n-1)}.
\end{eqnarray*}
\end{proposition}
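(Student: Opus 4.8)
The plan is to read the asserted bound as a sum of three contributions and to obtain each by peeling off the simpler, already-understood piece. The first displayed inequality is nothing more than the triangle inequality for the entrywise max-norm $\|\cdot\|$ (noted earlier to be subadditive), applied to the telescoping decomposition
\[ V_n^{-1} - S_n = (V_n^{-1} - \bar{S}_n) + (\bar{S}_n - S_n). \]
So my first step is simply to record this identity and invoke $\|A+B\| \le \|A\| + \|B\|$. The problem then reduces to bounding the two summands separately, and the three-term right-hand side of the Proposition will emerge as two terms from the first summand plus one term from the second.

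The second summand is elementary and self-contained. By construction $\bar{S}_n - S_n = v_{..}^{-1}\mathbf{1}_n\mathbf{1}_n^\prime$ is the rank-one matrix all of whose entries equal $v_{..}^{-1} > 0$, so its max-norm is exactly $v_{..}^{-1}$. To turn this into the clean term $1/(mn(n-1))$ I would bound $v_{..}$ from below using only the defining structure of the class $\mathcal{L}_n(m,M)$: since $v_{ii} = \sum_{j \neq i} v_{ij}$ and each off-diagonal entry satisfies $v_{ij} \ge m$,
\[ v_{..} = \sum_{i=1}^n v_{ii} = \sum_{i \neq j} v_{ij} \ge mn(n-1), \]
there being $n(n-1)$ ordered off-diagonal pairs. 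Hence $\|\bar{S}_n - S_n\| = v_{..}^{-1} \le (mn(n-1))^{-1}$, which is the last term of the stated bound.

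For the first summand $\|V_n^{-1} - \bar{S}_n\|$ I would invoke Proposition A1 of Yan and Xu (2013) directly, since $V_n \in \mathcal{L}_n(m,M)$ is exactly the hypothesis under which their refined approximation $\bar{S}_n$ was designed and bounded; this supplies the two terms $\frac{M(nM+(n-2)m)}{2m^3(n-2)(n-1)^2}$ and $\frac{1}{2m(n-1)^2}$. If one wanted a self-contained argument rather than a citation, the natural route is to form the residual $R_n = I - V_n \bar{S}_n$, compute its entries explicitly (using $(V_n S_n)_{ik} = v_{ik}/v_{kk}$ together with the row-sum identity $\sum_j v_{ij} = 2v_{ii}$, which shows every entry of $R_n$ is of size $O(1/n)$), and then write the error as $V_n^{-1} - \bar{S}_n = V_n^{-1} R_n$ and control it through the action of $V_n^{-1}$.

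The main obstacle is precisely this estimate of $\|V_n^{-1} - \bar{S}_n\|$: it must be an entrywise (max-norm) bound of the correct order in $n$, $m$, and $M$ — of magnitude $O(M^2/(m^3 n^2))$ — which forces one to track how $V_n^{-1}$ acts on the $O(1/n)$ residual down to order $1/n^2$ rather than settling for a crude operator-norm bound, exploiting the special structure of $V_n$ (whose quadratic form is $\sum_{i<j} v_{ij}(x_i+x_j)^2$). This delicate cancellation between the diagonal and rank-one parts of $\bar{S}_n$ is exactly the content supplied by the cited result of Yan and Xu (2013); by comparison, the triangle-inequality split and the rank-one estimate are routine.
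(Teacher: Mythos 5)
Your proposal is correct and follows essentially the same route as the paper: the paper likewise splits $V_n^{-1}-S_n=(V_n^{-1}-\bar{S}_n)+(\bar{S}_n-S_n)$, uses the subadditivity of the entrywise max-norm, evaluates $\|\bar{S}_n-S_n\|=v_{..}^{-1}\le (mn(n-1))^{-1}$ exactly as you do, and cites Proposition A1 of Yan and Xu (2013) for the remaining two terms bounding $\|V_n^{-1}-\bar{S}_n\|$. The self-contained residual argument you sketch as an alternative is not carried out in the paper either; the citation does that work.
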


Note that $d_i=\sum_{j\neq i}a_{ij}$ are sums of $n-1$
independent multinomial random variables. By the central limit theorem for the bounded case (Lo\`{e}ve, 1977, p. 289),
$v_{ii}^{-1/2} \{d_i - \E(d_i)\}$
is asymptotically standard normal if $v_{ii}$ diverges.
Following Hillar and Wibisono (2013), we assume that $\max_i |\theta_i|$ is bounded by a constant in this appendix.
For convenience, we assume that $\max_{i}|\theta_i| \le L/2$ with $L$ a fixed constant.
Thus, $\max_{i,j}|\theta_i + \theta_j|\le L$.
Since
\[
e^{2k(\theta_i+\theta_j)} \le e^{(k+(k-1))(\theta_i+\theta_j) + L },~~1\le k \le q-1,
\]
we have
\[
\sum_{k=0}^{q-1} e^{2k(\theta_i+\theta_j)} \le \sum_{0\le k\neq l \le q-1} e^{(k+l)(\theta_i+\theta_j)}e^{L}.
\]
Therefore,
\begin{eqnarray}
\nonumber
\frac{\frac{1}{2}\sum_{k\neq l}e^{(k+l)(\theta_i+\theta_j)} }{ (\sum_{a=0}^{q-1} e^{a(\theta_i+\theta_j)})^2 }
& = & \frac{\frac{1}{2}\sum_{k\neq l}e^{(k+l)(\theta_i+\theta_j)} }
{ \sum_{k\neq l} e^{(k+l)(\theta_i+\theta_j)} +\sum_{k=0}^{q-1}e^{2k(\theta_i+\theta_j)} }
\\
\label{inequality-binary-vij}
& \ge & \frac{\sum_{k\neq l}e^{(k+l)(\theta_i+\theta_j)} }
{ 2(1+e^{L}) \sum_{k\neq l} e^{(k+l)(\theta_i+\theta_j)}  }\ge \frac{1}{2(1+e^{L})}.
\end{eqnarray}
Recall the definition of $v_{ii}$ in Theorem \ref{Thm:Asym:Binary}. It shows that $v_{ii}\ge (n-1)/(2(1+e^{L}))$. If $L$ is a constant,
then $v_{ii}\to \infty$ for all $i$ as $n\to\infty$.
If $r$ is a fixed constant, one may replace the statistics $d_1, \ldots, d_r$ by the independent random variables
$\tilde{d}_i=d_{i, r+1} + \ldots + d_{in}$, $i=1,\ldots,r$ when considering the asymptotic behaviors of $d_1, \ldots, d_r$.
Therefore, we have the following proposition.

\begin{proposition}\label{pro:central:binary}
Assume that $\max_i |\theta_i|\le L$ with $L$ a constant. Then as
$n\to\infty$: \\
(1)For any fixed $r\ge 1$,  the components of $(d_1 - \E (d_1), \ldots, d_r - \E (d_r) )$ are
asymptotically independent and normally distributed with variances $v_{11}, \ldots, v_{rr}$,
respectively. \\
(2)More generally, $\sum_{i=1}^n c_i(d_i-\E(d_i))/\sqrt{v_{ii}}$ is asymptotically normally distributed with mean zero
and variance $\sum_{i=1}^\infty c_i^2$ whenever $c_1, c_2, \ldots$ are fixed constants and the latter sum is finite.
\end{proposition}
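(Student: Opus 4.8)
The plan is to prove part (2) first and then obtain part (1) from it by the Cramér–Wold device. Write $\sigma_{ij}^2 = \mathrm{Var}(a_{ij})$, and recall from Theorem \ref{Thm:Asym:Binary} that $v_{ii} = \sum_{j\neq i}\sigma_{ij}^2$, that $\sigma_{ij}^2 \le C_0$ for a constant $C_0 = C_0(q)$ (since $0\le a_{ij}\le q-1$), and that the bound established just above the proposition gives $v_{ii} \ge c_\ast(n-1)$ for a positive constant $c_\ast = \{2(1+e^L)\}^{-1}$. Since $d_i - \E(d_i) = \sum_{j\neq i}(a_{ij}-\E a_{ij})$ and the edges are symmetric and independent, I would reorganize the target statistic as a sum over edges:
\[
T_n := \sum_{i=1}^n \frac{c_i}{\sqrt{v_{ii}}}\,(d_i - \E d_i) = \sum_{1\le i<j\le n} b_{ij}\,(a_{ij}-\E a_{ij}), \qquad b_{ij} := \frac{c_i}{\sqrt{v_{ii}}} + \frac{c_j}{\sqrt{v_{jj}}} .
\]
This exhibits $T_n$ as a sum of $\binom{n}{2}$ independent, mean-zero, uniformly bounded summands, to which I intend to apply the Lindeberg–Feller central limit theorem.

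The first computation would be the variance $\mathrm{Var}(T_n) = \sum_{i<j} b_{ij}^2\sigma_{ij}^2$. Expanding $b_{ij}^2$ and using $\sum_{j\neq i}\sigma_{ij}^2 = v_{ii}$, the square terms telescope into the diagonal part
\[
\sum_{i<j}\Big(\frac{c_i^2}{v_{ii}} + \frac{c_j^2}{v_{jj}}\Big)\sigma_{ij}^2 = \sum_{i=1}^n \frac{c_i^2}{v_{ii}}\sum_{j\neq i}\sigma_{ij}^2 = \sum_{i=1}^n c_i^2 \;\longrightarrow\; \sum_{i=1}^\infty c_i^2 ,
\]
while the remaining cross part is $R_n := \sum_{i\neq j} c_ic_j\,\sigma_{ij}^2/\sqrt{v_{ii}v_{jj}}$, so that everything reduces to showing $R_n \to 0$.

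I expect the vanishing of $R_n$ to be the main obstacle, since the naive Cauchy–Schwarz estimate only yields boundedness of $R_n$, not decay. Using $\sigma_{ij}^2 \le C_0$ and $v_{ii}\ge c_\ast(n-1)$ I would bound $|R_n| \le \frac{C_0}{c_\ast(n-1)}\big(\sum_{i=1}^n|c_i|\big)^2$, and then invoke the elementary fact that $\sum_i c_i^2 < \infty$ forces $\big(\sum_{i=1}^n|c_i|\big)^2 = o(n)$: given $\varepsilon>0$, fix $N$ with $\sum_{i>N}c_i^2 < \varepsilon^2$, split the partial sum at $N$, and apply Cauchy–Schwarz to the tail to get $\frac{1}{\sqrt n}\sum_{i=1}^n|c_i| \le \frac{A_N}{\sqrt n} + \varepsilon$, so the $\limsup$ is at most $\varepsilon$. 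It is precisely this $o(\sqrt n)$ behaviour of the partial sums of an $\ell^2$ sequence that makes $|R_n| \le O(1)\cdot o(n)/n \to 0$, giving $\mathrm{Var}(T_n)\to \sum_{i=1}^\infty c_i^2 =: s^2$.

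To finish (assuming $s^2>0$, the case $s^2=0$ being trivial), I would verify the Lindeberg condition. Each summand obeys $|b_{ij}(a_{ij}-\E a_{ij})| \le (q-1)|b_{ij}| \le 2(q-1)\sup_k|c_k|/\sqrt{c_\ast(n-1)}$, which tends to $0$ uniformly in $(i,j)$ because $\sup_k|c_k|<\infty$ (a consequence of $\sum_k c_k^2<\infty$); hence for every fixed $\varepsilon$ the truncation indicators $\mathbf{1}(|b_{ij}(a_{ij}-\E a_{ij})|>\varepsilon)$ vanish for all large $n$, making the Lindeberg sum eventually zero. Combined with $\mathrm{Var}(T_n)\to s^2$, Lindeberg–Feller yields $T_n \Rightarrow N(0,s^2)$, which is statement (2). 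Statement (1) then follows by Cramér–Wold: applying (2) with $c_i=t_i$ for $i\le r$ and $c_i=0$ for $i>r$ gives $\sum_{i=1}^r t_i v_{ii}^{-1/2}(d_i-\E d_i) \Rightarrow N(0,\sum_{i=1}^r t_i^2)$ for every $(t_1,\dots,t_r)$, identifying the joint limit as standard multivariate normal and hence establishing asymptotic independence with the stated variances; alternatively (1) can be argued directly from the independent variables $\tilde d_i$ introduced above, since $v_{ii}^{-1/2}(d_i-\tilde d_i)\to 0$ in probability.
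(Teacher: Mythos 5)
Your proposal is correct, but it runs in the opposite direction from the paper's own argument, and the two are genuinely different proofs. The paper establishes part (1) first: it invokes the one-dimensional central limit theorem for sums of bounded independent variables (Lo\`{e}ve) to get normality of each $v_{ii}^{-1/2}(d_i-\E d_i)$, and obtains joint asymptotic independence by the decoupling trick of replacing $d_1,\dots,d_r$ with the exactly independent truncated sums $\tilde d_i=\sum_{j>r}a_{ij}$ (the discarded edges $a_{ij}$, $i,j\le r$, are $O(1)$ while $\sqrt{v_{ii}}\asymp\sqrt{n}$). It then deduces part (2) from part (1) via Theorem 4.2 of Billingsley (1968), which requires the uniform variance bound $\lim_{r\to\infty}\limsup_n \mathrm{Var}\bigl(\sum_{k>r}c_k(d_k-\E d_k)/\sqrt{v_{kk}}\bigr)=0$, verified through an eigenvalue bound on the correlation matrix of the normalized degrees. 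You instead prove (2) directly: the edge-level reorganization $T_n=\sum_{i<j}b_{ij}(a_{ij}-\E a_{ij})$ with $b_{ij}=c_i/\sqrt{v_{ii}}+c_j/\sqrt{v_{jj}}$ exhibits the statistic as a sum of independent bounded terms, the diagonal part of the variance telescopes exactly to $\sum_{i\le n}c_i^2$ via $v_{ii}=\sum_{j\neq i}\sigma_{ij}^2$, the cross term is killed by the correct observation that $\ell^2$-summability forces $\sum_{i\le n}|c_i|=o(\sqrt n)$, and the Lindeberg condition is trivial because every summand is $O(n^{-1/2})$ uniformly; part (1) then drops out by Cram\'{e}r--Wold. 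Your route buys self-containedness: it avoids both the citation to Billingsley's approximation theorem and the paper's eigenvalue claim (which, as stated, concerns matrices with \emph{negative} off-diagonal entries, whereas the correlation matrix here has positive off-diagonals, so your explicit cross-term estimate is arguably cleaner), and a single argument delivers both parts. The paper's route buys brevity for part (1), which becomes nearly immediate, at the cost of outsourcing part (2) to heavier machinery. One cosmetic remark: your constant $c_\ast=\{2(1+e^{L})\}^{-1}$ matches the Appendix's convention $\max_i|\theta_i|\le L/2$ rather than the proposition's stated $\max_i|\theta_i|\le L$ (an inconsistency already present in the paper); under the latter the constant is $\{2(1+e^{2L})\}^{-1}$, which changes nothing in the argument.
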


Part (2) follows from part (1) and the fact that
\[
\lim_{r\to\infty} \limsup_{t\to\infty}Var( \sum_{k=r+1}^n c_i \frac{ d_i - \E (d_i) }{\sqrt{v_{ii}}})=0
\]
by Theorem 4.2 of Billingsley (1968). To prove the above equation, it suffices to show that the eigenvalues of
the covariance matrix of $(d_i - \E (d_i))/v_{ii}^{1/2}$, $i=r+1, \ldots, n$ are bounded by 2 (for all $r<n$).
This comes from the well-known Perron-Frobenius theory:
if $A$ is a symmetric positive definite matrix with diagonal elements equaling to $1$, and with negative off-diagonal elements,
then its largest eigenvalue is less than $2$. We will only use part (1) to prove Theorem \ref{Thm:Asym:Binary}.

Before proving Theorem \ref{Thm:Asym:Binary}, we show three lemmas below.
By direct calculations,
\[
v_{ij}=
\frac{\sum_{0\le k< l\le q-1}(k-l)^2e^{(k+l)(\theta_i+\theta_j)} }{ (\sum_{a=0}^{q-1} e^{a(\theta_i+\theta_j)})^2 },~~i,j=1, \ldots, n;i\neq j.
\]
and $v_{ii}=\sum_{j\neq i}v_{ij}$.
On the other hand, it is easy to see that
\begin{equation}\label{ineq:vij-binary:upper}
\frac{\frac{1}{2}\sum_{k\neq l}(k-l)^2e^{(k+l)(\alpha_i+\alpha_j)} }{ (\sum_{a=0}^{q-1} e^{a(\alpha_i+\alpha_j)})^2 }
\le \frac{1}{2}\max_{k\neq l}(k-l)^2 \le \frac{q^2}{2}.
\end{equation}
In view of inequality \eqref{inequality-binary-vij},
if $\max_{i,j}|\theta_i + \theta_j| \le L$ with $L$ a constant, then $V_n\in L_n(m, M)$ with $m$ and $M$ constants.
Applying Proposition \ref{Prop:Matrix:Inverse}, we have
\begin{lemma}\label{lemma:binary:matrix1}
Assume that $\max_{i}|\theta_i| \le L/2$ with $L$ a fixed constant. If $n$ is large enough, then
\begin{equation}
\| V_n^{-1} - S_n \| \le c_1(n-1)^{-2},
\end{equation}
where $c_1$ is a constant only depending on $L$.
\end{lemma}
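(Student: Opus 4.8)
The plan is to apply Proposition \ref{Prop:Matrix:Inverse} directly, so that the work reduces to verifying that $V_n$ belongs to the matrix class $\mathcal{L}_n(m,M)$ for constants $m$ and $M$ depending only on $L$ (and the fixed integer $q$), and then simplifying the resulting error bound to the claimed order $(n-1)^{-2}$.

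First I would confirm the three defining properties of $\mathcal{L}_n(m,M)$. Symmetry and nonnegativity of $V_n$ are immediate, since $V_n$ is a Fisher information matrix with the off-diagonal entries given by the formula displayed just before the lemma, and the relation $v_{ii}=\sum_{j\neq i}v_{ij}$ is already recorded. It remains to produce uniform bounds $m\le v_{ij}\le M$ for $i\neq j$. For the upper bound, inequality \eqref{ineq:vij-binary:upper} gives $v_{ij}\le q^2/2$, so I would set $M=q^2/2$. For the lower bound, I would use $(k-l)^2\ge 1$ whenever $k\neq l$ to replace the numerator of $v_{ij}$ by $\tfrac12\sum_{k\neq l}e^{(k+l)(\theta_i+\theta_j)}$, and then invoke inequality \eqref{inequality-binary-vij} to obtain $v_{ij}\ge 1/(2(1+e^L))$; thus I would take $m=1/(2(1+e^L))$. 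Crucially, both constants are free of $n$, and it is precisely the boundedness hypothesis $\max_i|\theta_i|\le L/2$ that keeps $m$ bounded away from zero uniformly in $n$.

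With $V_n\in\mathcal{L}_n(m,M)$ established, I would substitute these $m,M$ into the bound of Proposition \ref{Prop:Matrix:Inverse} and inspect the three summands term by term. The first term factors as $(n-1)^{-2}$ times $M(nM+(n-2)m)/(2m^3(n-2))$, whose second factor is bounded for $n\ge 3$ (using $n/(n-2)\le 3$) and converges to $M(M+m)/(2m^3)$; the second term is exactly $\tfrac{1}{2m}(n-1)^{-2}$; and the third satisfies $1/(mn(n-1))\le 1/(m(n-1)^2)$ because $n(n-1)\ge(n-1)^2$. Collecting these estimates yields $\|V_n^{-1}-S_n\|\le c_1(n-1)^{-2}$ with $c_1$ expressible through $m$ and $M$, hence depending only on $L$ and the fixed $q$.

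The estimate is essentially routine once the membership in $\mathcal{L}_n(m,M)$ is in hand, so the only real obstacle is the uniform lower bound $m$ on the off-diagonal entries. Since $m$ enters the error bound of Proposition \ref{Prop:Matrix:Inverse} through the factor $m^{-3}$, any failure of $m$ to stay bounded away from zero as $n\to\infty$ would inflate the first summand and destroy the $(n-1)^{-2}$ rate; this is exactly why the bounded-parameter assumption is indispensable here and cannot be dropped without a separate argument.
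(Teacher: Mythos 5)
Your proposal is correct and takes essentially the same route as the paper: the paper likewise establishes $V_n\in\mathcal{L}_n(m,M)$ with constants $m$ and $M$ depending only on $L$ and $q$ (the lower bound $1/(2(1+e^{L}))$ via inequality \eqref{inequality-binary-vij} combined with $(k-l)^2\ge 1$, the upper bound $q^2/2$ via \eqref{ineq:vij-binary:upper}) and then invokes Proposition \ref{Prop:Matrix:Inverse}. Your term-by-term simplification of the resulting bound to the $(n-1)^{-2}$ rate merely spells out what the paper leaves implicit.
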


\begin{lemma}\label{lemma:binary:2}
Assume that $\max_{i}|\theta_i| \le L/2$ with $L$ a fixed constant. Let $U_n=\mbox{cov}[W_n\{\mathbf{d}-\E(\mathbf{d})\}]$. Then
\begin{equation}
\|U_n\|\le \|V_n^{-1}-S_n\| + c_2(n-1)^{-2},
\end{equation}
where $c_2$ is a constant only depending on $L$.
\end{lemma}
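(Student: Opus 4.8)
The plan is to start from the observation that, in an exponential family, the covariance matrix of the natural sufficient statistic coincides with the Fisher information matrix, so that $\mathrm{cov}(\mathbf{d})=V_n$. This is immediate here from independence of the edges: since $d_i=\sum_{j\neq i}a_{ij}$ and distinct degrees share exactly one edge, $\mathrm{cov}(d_i,d_j)=\mathrm{Var}(a_{ij})=v_{ij}$ for $i\neq j$, while $\mathrm{Var}(d_i)=\sum_{j\neq i}\mathrm{Var}(a_{ij})=\sum_{j\neq i}v_{ij}=v_{ii}$, which is exactly the matrix $V_n$. Because $W_n=V_n^{-1}-S_n$ is a fixed symmetric matrix (both $V_n^{-1}$ and the diagonal $S_n$ are symmetric), the linear-transformation rule for covariances gives $U_n=\mathrm{cov}[W_n\{\mathbf{d}-\E(\mathbf{d})\}]=W_nV_nW_n^T=W_nV_nW_n$.

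Next I would simplify $W_nV_nW_n$ algebraically. Expanding $(V_n^{-1}-S_n)V_n(V_n^{-1}-S_n)$ and using $V_n^{-1}V_n=V_nV_n^{-1}=I$, the cross terms collapse and one obtains
\[
U_n=V_n^{-1}-2S_n+S_nV_nS_n=W_n+\bigl(S_nV_nS_n-S_n\bigr).
\]
By the triangle inequality for the entrywise max-norm $\|\cdot\|$ (already noted in the excerpt), $\|U_n\|\le\|W_n\|+\|S_nV_nS_n-S_n\|=\|V_n^{-1}-S_n\|+\|S_nV_nS_n-S_n\|$, so it remains only to bound the second term by $c_2(n-1)^{-2}$.

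For that term I would compute entries directly. Since $S_n=\mathrm{diag}(1/v_{11},\dots,1/v_{nn})$, the $(i,j)$ entry of $S_nV_nS_n$ equals $v_{ij}/(v_{ii}v_{jj})$. On the diagonal this gives $v_{ii}/v_{ii}^2=1/v_{ii}=(S_n)_{ii}$, so the diagonal entries of $S_nV_nS_n-S_n$ all vanish; the off-diagonal entries are $v_{ij}/(v_{ii}v_{jj})$. Hence $\|S_nV_nS_n-S_n\|=\max_{i\neq j}v_{ij}/(v_{ii}v_{jj})$. Now I invoke the two bounds already available under $\max_i|\theta_i|\le L/2$: the upper bound $v_{ij}\le q^2/2$ from \eqref{ineq:vij-binary:upper}, and the lower bound $v_{ii}\ge(n-1)/(2(1+e^{L}))$ derived just before Lemma \ref{lemma:binary:matrix1}. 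Together these yield $v_{ij}/(v_{ii}v_{jj})\le 2q^2(1+e^{L})^2/(n-1)^2$, so the claim holds with $c_2=2q^2(1+e^{L})^2$, a constant depending only on $L$ (with $q$ fixed).

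There is essentially no hard step here: the entire content is the identification $\mathrm{cov}(\mathbf{d})=V_n$ together with the clean cancellation $U_n=W_n+(S_nV_nS_n-S_n)$. The only point requiring a little care is verifying that the diagonal of $S_nV_nS_n-S_n$ is exactly zero, since this is what lets the remainder be controlled by the small off-diagonal ratios $v_{ij}/(v_{ii}v_{jj})$ rather than by an $O(1/v_{ii})$ diagonal contribution; the subsequent estimates are routine applications of the constant bounds on $v_{ij}$ and the linear-in-$n$ growth of $v_{ii}$.
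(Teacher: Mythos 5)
Your proof is correct and follows essentially the same route as the paper: your decomposition $U_n=W_n+(S_nV_nS_n-S_n)$ is algebraically identical to the paper's $U_n=(V_n^{-1}-S_n)-S_n(I_n-V_nS_n)$, and both arguments then note the vanishing diagonal, bound the off-diagonal entries $v_{ij}/(v_{ii}v_{jj})$ via $v_{ij}\le q^2/2$ and $v_{ii}\ge (n-1)/(2(1+e^{L}))$, and conclude by the triangle inequality. Your explicit constant $c_2=2q^2(1+e^{L})^2$ and the justification of $\mathrm{cov}(\mathbf{d})=V_n$ via edge independence are fine.
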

\begin{proof}
Note that
\begin{equation*}
U_n=W_nV_nW^T_n=(V_n^{-1}-S_n)-S_n (I_n-V_nS_n),
\end{equation*}
and
\begin{equation*}
\{S_n(I_n-V_nS_n)\}_{i,j}=\frac{(\delta_{ij}-1)v_{ij}}{v_{ii}v_{jj}}.
\end{equation*}
By  \eqref{inequality-binary-vij} and \eqref{ineq:vij-binary:upper},
\begin{equation*}
|\{S_n(I_n-V_nS_n)\}_{ij}|\le c_2(n-1)^{-2},
\end{equation*}
where $c_2$ is a constant.
Thus,
\begin{eqnarray*}
\|U_n\|  \le  \|V_n^{-1}-S_n\|+\|S_n(I_n-V_nS_n)\|
\le
\|V_n^{-1}-S_n\|+c_2(n-1)^{-2}.
\end{eqnarray*}
\end{proof}

In order to prove the below lemma, we need one theorem due to Hillar and Wibisono (2013).

\begin{theorem}\label{Thm:Asym:Binary-con}
Assume that $\max_i |\theta_i| \le L/2$ with $L>0$ a fixed constant.
Then for sufficiently large $n$, with probability at least $1-2/n$, the MLE $\bs{\widehat{\theta}}$ exists and satisfies
\[
\max_i |\hat{\theta}_i - \theta_i| \le c_3\sqrt{\frac{\log n}{n} }
\]
where $c_3$ is a constant that only depends on $L$.
\end{theorem}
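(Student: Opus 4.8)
The plan is to solve the likelihood equation $\mu(\theta) = \mathbf{d}$, where $\mu(\theta) = \E_\theta(\mathbf{d}) = -\nabla z(\theta)$ is the expected-degree map, by perturbing around the true parameter $\theta^\ast$ and exploiting that its Jacobian $\partial \mu/\partial\theta = -V_n(\theta)$ is symmetric, diagonally dominant, and well-conditioned. Since $\mu(\theta^\ast) = \E(\mathbf{d})$, solving $\mu(\hat\theta) = \mathbf{d}$ amounts to absorbing the small random perturbation $\mathbf{d} - \E(\mathbf{d})$. Writing, via the fundamental theorem of calculus,
\[
\mathbf{d} - \E(\mathbf{d}) = \mu(\hat\theta) - \mu(\theta^\ast) = -\Bigl(\int_0^1 V_n\bigl(\theta^\ast + t(\hat\theta - \theta^\ast)\bigr)\,dt\Bigr)(\hat\theta - \theta^\ast),
\]
and inverting the averaged Jacobian $\bar V_n$, I would obtain $\hat\theta - \theta^\ast = -\bar V_n^{-1}(\mathbf{d} - \E(\mathbf{d}))$. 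The diagonal approximation $\bar V_n^{-1}\approx S_n$ supplied by Proposition \ref{Prop:Matrix:Inverse} then converts this into the componentwise estimate $|\hat\theta_i - \theta_i^\ast| \approx |d_i - \E(d_i)|/v_{ii}$.

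First I would establish the degree concentration that drives the entire bound. Each $d_i = \sum_{j\neq i}a_{ij}$ is a sum of $n-1$ independent variables bounded in $[0,q-1]$, so Hoeffding's inequality gives $\P(|d_i - \E d_i| > t)\le 2\exp\{-2t^2/((n-1)(q-1)^2)\}$. Choosing $t = c\sqrt{n\log n}$ with $c$ depending only on $q$ makes each Hoeffding tail at most $2n^{-2}$; a union bound over the $n$ vertices then yields $\max_i|d_i - \E(d_i)| \le c\sqrt{n\log n}$ with probability at least $1 - 2/n$, which is exactly the failure probability claimed in the statement. Combined with the lower bound $v_{ii}\ge (n-1)/(2(1+e^{L}))$ established before Proposition \ref{pro:central:binary}, the heuristic inversion predicts $|\hat\theta_i - \theta_i^\ast| = O(\sqrt{n\log n}/n) = O(\sqrt{\log n/n})$, matching the target rate with $c_3$ depending only on $L$.

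To make the inversion rigorous and simultaneously prove existence, I would set up a self-map on the ball $B = \{\theta : \max_i|\theta_i - \theta_i^\ast| \le c_3\sqrt{\log n/n}\}$ and show it admits a fixed point. Since each coordinate equation $d_i = \sum_{j\neq i} g(\theta_i + \theta_j)$ has a strictly increasing right-hand side in $\theta_i$, where $g$ is the multinomial mean, one can solve uniquely for $\theta_i$ given the remaining coordinates, which defines a map $\Phi(\theta)$ whose fixed points are precisely the solutions of the likelihood equations; the Chatterjee--Diaconis--Sly iteration is exactly this map. I would then show that $\Phi$ contracts $B$ into itself: on $B$ all parameters remain bounded by a fixed constant, so the off-diagonal entries $v_{ij}(\theta)$ stay between two positive constants $m$ and $M$ and $V_n(\theta)\in\mathcal{L}_n(m,M)$ uniformly, whence the contraction factor is governed by the off-diagonal-to-diagonal ratio, of order $O(1/n)$. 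Existence and uniqueness of the MLE in $B$ follow from the Banach fixed-point theorem, and the radius of $B$ delivers the stated bound on the same high-probability event.

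The main obstacle will be this existence/self-map step rather than the final arithmetic. The delicate points are verifying that $\Phi$ genuinely carries $B$ into itself, which requires the degree concentration of the second paragraph to hold on the same event, and establishing the uniform diagonal dominance $V_n(\theta)\in\mathcal{L}_n(m,M)$ for every $\theta\in B$, for which one must check that a perturbation of $\theta^\ast$ of size $O(\sqrt{\log n/n})$ does not destroy the lower bound $v_{ij}\ge m$. This is precisely where the assumption $\max_i|\theta_i|\le L/2$ is indispensable, as it confines every $\theta_i + \theta_j$ to a compact interval on which $g$ and its derivatives stay bounded away from the degenerate regime. Once uniform conditioning is secured, Proposition \ref{Prop:Matrix:Inverse} upgrades the heuristic $\bar V_n^{-1}\approx S_n$ to an error of order $n^{-2}$, which is negligible against the leading $O(\sqrt{\log n/n})$ term and hence does not affect the rate.
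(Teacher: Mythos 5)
First, a point about the comparison itself: the paper does not prove this statement at all. It is quoted as Theorem \ref{Thm:Asym:Binary-con}, explicitly ``due to Hillar and Wibisono (2013)'', and is used as a black-box ingredient in the proof of Lemma \ref{lemma:binary:3}. So your proposal cannot be judged against a proof in the paper; it has to stand on its own as a proof of the Hillar--Wibisono consistency result. Parts of it do stand: the Hoeffding-plus-union-bound step is correct and yields exactly the stated probability $1-2/n$ together with $\max_i|d_i-\E(d_i)|=O(\sqrt{n\log n})$, and \emph{granted} that $\hat{\boldsymbol{\theta}}$ exists in a ball on which all coordinates stay bounded, your inversion of the averaged Jacobian via Proposition \ref{Prop:Matrix:Inverse} and the bound $v_{ii}\ge (n-1)/(2(1+e^L))$ do deliver the rate $c_3\sqrt{\log n/n}$.

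The existence step, however --- which you rightly call the crux --- contains a genuine error. The coordinate-wise-solve map $\Phi$ (solve $d_i=\sum_{j\ne i}g(\theta_i+\theta_j)$ for $\theta_i$ with the other coordinates frozen) is provably \emph{not} a contraction on any neighborhood of its fixed point, in any norm. Its Jacobian entries are $\partial\Phi_i/\partial\theta_j=-g'(\Phi_i+\theta_j)/\sum_{k\ne i}g'(\Phi_i+\theta_k)$; these are indeed individually $O(1/n)$, but they all have the same sign, so each row of the Jacobian has absolute sum exactly $1$. The ``off-diagonal-to-diagonal ratio'' of order $O(1/n)$ controls single entries, not the $\ell_\infty$ operator norm. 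Worse, the map has an exact anti-shift symmetry: since only the sums $\theta_i+\theta_j$ enter, $\Phi(\boldsymbol{\theta}+\delta\mathbf{1}_n)=\Phi(\boldsymbol{\theta})-\delta\mathbf{1}_n$ for every $\delta$. Hence if $\hat{\boldsymbol{\theta}}$ is the fixed point, the pair $\hat{\boldsymbol{\theta}}+\delta\mathbf{1}_n$, $\hat{\boldsymbol{\theta}}-\delta\mathbf{1}_n$ is an exact $2$-cycle of $\Phi$ for every $\delta>0$; there are periodic orbits arbitrarily close to the fixed point, and the iteration started from $\boldsymbol{\theta}^\ast+\delta\mathbf{1}_n$ oscillates forever instead of converging. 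Banach's fixed-point theorem therefore cannot be invoked. Your identification of $\Phi$ with the Chatterjee--Diaconis--Sly iteration is also inaccurate: in the binary case their map is $\varphi_i(\mathbf{x})=\log d_i-\log\sum_{j\ne i}e^{x_j}/(1+e^{x_i+x_j})$, not the exact coordinate solve, and even that map has Jacobian row sums equal to $1$ --- which is precisely why both Chatterjee--Diaconis--Sly and Hillar--Wibisono need a bespoke convergence lemma rather than a naive contraction argument. A repair that stays close to your outline would be to drop $\Phi$ entirely and prove existence of a solution of $\mu(\boldsymbol{\theta})=\mathbf{d}$ inside the ball $B$ by a quantitative inverse-function (Newton--Kantorovich or degree-theoretic) argument applied to $\mu$ itself, using that $\bar{V}_n$ --- unlike the Jacobian of $\Phi$ --- is uniformly invertible on $B$ with $\|\bar{V}_n^{-1}\|_{\infty\to\infty}=O(1/n)$ by Proposition \ref{Prop:Matrix:Inverse}, so that $\|\bar{V}_n^{-1}(\mathbf{d}-\E(\mathbf{d}))\|_\infty=O(\sqrt{\log n/n})$ fits inside $B$.
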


\begin{lemma}\label{lemma:binary:3}
If $\max_i |\theta_i| \le L/2$ with $L>0$ a fixed constant, then
for $i=1, \ldots, r$ with $r$ a fixed constant,
\begin{equation*}
\theta_i - \hat{\theta}_i = [V_{n}^{-1}\{\mathbf{d} - \E(\mathbf{d}) \}]_i + o_p(n^{-1/2}).
\end{equation*}
\end{lemma}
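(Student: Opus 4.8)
The plan is to linearize the likelihood (moment) equations about the true parameter and show that the quadratic remainder, once premultiplied by $V_n^{-1}$, is negligible at the $n^{-1/2}$ scale. First I would write the equations \eqref{eq:likelihood-finite} compactly as $g(\hat{\bs\theta}) = \mathbf{d}$, where $g(\bs\theta) := \E_{\bs\theta}(\mathbf{d}) = -\nabla z(\bs\theta)$ is the mean-value map of the exponential family. Since $g_i(\bs\theta) = \sum_{l \neq i}\mu(\theta_i+\theta_l)$, with $\mu(t)$ the common mean of an edge weight at natural parameter $t$, one has $\mu'(\theta_i+\theta_j) = -v_{ij}$ for $i \neq j$, so the Jacobian of $g$ is exactly $-V_n$, while its second derivatives are governed by $\mu''$ (a third cumulant). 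On the event that the MLE exists, which by Theorem~\ref{Thm:Asym:Binary-con} has probability at least $1-2/n$, a componentwise second-order Taylor expansion of $g$ about $\bs\theta$ gives
\[
\mathbf{d} - \E(\mathbf{d}) = -V_n(\hat{\bs\theta} - \bs\theta) + R,
\]
where $R = (R_1,\ldots,R_n)^T$ and each $R_i = \tfrac12 (\hat{\bs\theta}-\bs\theta)^T H_i(\bar{\bs\theta}^{(i)})(\hat{\bs\theta}-\bs\theta)$, with $H_i$ the Hessian of $g_i$ at a point $\bar{\bs\theta}^{(i)}$ on the segment joining $\bs\theta$ and $\hat{\bs\theta}$. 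Solving yields
\[
\bs\theta - \hat{\bs\theta} = V_n^{-1}\{\mathbf{d}-\E(\mathbf{d})\} - V_n^{-1}R,
\]
so the lemma reduces to proving $[V_n^{-1}R]_i = o_p(n^{-1/2})$ for each fixed $i \le r$.

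Next I would bound $R$ uniformly. The key structural fact is that $g_i$ depends on $\bs\theta$ only through the pairwise sums $\theta_i+\theta_l$, so $H_i$ is sparse: its only nonzero entries lie in row/column $i$ together with the diagonal, and each equals $\mu''$ evaluated at some $\bar\theta^{(i)}_i + \bar\theta^{(i)}_l$. Because $\max_i|\theta_i|\le L/2$ and $\max_i|\hat\theta_i - \theta_i| = O_p(\sqrt{\log n/n}) \to 0$ by Theorem~\ref{Thm:Asym:Binary-con}, every relevant argument of $\mu''$ lies in a fixed compact set, so $|\mu''|$ is bounded by a constant depending only on $L$ and $q$. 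Combining this boundedness with the sparsity (only $O(n)$ nonzero Hessian entries contribute to each $R_i$) and the uniform consistency rate gives $\max_k|R_k| = O_p(\log n)$: each of the three groups of terms — the single $(i,i)$ entry, the $O(n)$ cross terms involving index $i$, and the $O(n)$ diagonal terms — is bounded by a constant times $n\cdot(\sqrt{\log n/n})^2 = \log n$.

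Finally I would split $V_n^{-1} = S_n + W_n$ and treat the two parts separately. For the diagonal piece, $[S_n R]_i = R_i/v_{ii}$, and since $v_{ii} \ge (n-1)/\{2(1+e^L)\}$ by \eqref{inequality-binary-vij}, this is $O_p(\log n/n) = o_p(n^{-1/2})$ because $\log n/\sqrt{n}\to 0$. For the off-diagonal correction, Lemma~\ref{lemma:binary:matrix1} gives $\|W_n\| \le c_1(n-1)^{-2}$, whence
\[
\bigl|[W_n R]_i\bigr| \le n\,\|W_n\|\,\max_k|R_k| \le c_1\, n (n-1)^{-2}\, O_p(\log n) = o_p(n^{-1/2}).
\]
Adding the two bounds completes the proof. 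The main obstacle is the uniform remainder estimate $\max_k|R_k| = O_p(\log n)$: one must exploit the sparse Hessian structure to prevent an $n^2$ blow-up from the double sum, after which the interplay between the squared consistency rate $\log n/n$ and the $\Theta(n)$ growth of $v_{ii}$ is exactly what yields the required $o_p(n^{-1/2})$ — a tighter margin than it first appears, resting on $\log n = o(\sqrt{n})$.
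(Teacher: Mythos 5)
Your proposal is correct and follows essentially the same route as the paper's own proof: a Taylor expansion of the likelihood (mean-value) equations around $\bs\theta$ giving $\mathbf{d}-\E(\mathbf{d}) = V_n(\bs\theta-\hat{\bs\theta}) + \text{remainder}$, a uniform $O_p(\log n)$ bound on the remainder via the consistency rate of Theorem~\ref{Thm:Asym:Binary-con}, and the split $V_n^{-1}=S_n+W_n$ with $v_{ii}\gtrsim n$ and $\|W_n\|=O(n^{-2})$ from Lemma~\ref{lemma:binary:matrix1} to show the premultiplied remainder is $o_p(n^{-1/2})$. The only cosmetic difference is that you phrase the second-order term through the sparse per-coordinate Hessians $H_i$, while the paper expands each edge term $\mu(\hat\theta_i+\hat\theta_j)$ as a scalar function of the pairwise sum; these are the same computation.
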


\begin{proof}
Let $E_n$ be the event that the MLE $\bs{\widehat{\theta}}$ exists and $F_n$ be the event that
$\lambda_n:=\max_i |\hat{\theta}_i - \theta_i| \le c(\log n)^{1/2}/n^{1/2}$.
Derivations in what follows are on the event $E_n\bigcap F_n$.
Let
\[
\mu(t)= \sum_{a=0}^{r-1} \frac{ae^{at}}{\sum_{k=0}^{r-1} e^{kt}}.
\]
It is easy to verify that
\[
\mu'(t)=\frac{\sum_{0\le k< l\le r-1}(k-l)^2e^{(k+l)t} }{ (\sum_{a=0}^{r-1} e^{at})^2 }
\]
and
\[
\mu''(t)=\left[ \frac{\frac{1}{2} \sum_{k\neq l, a}(k-l)^2(k+l-2a)e^{(k+l+a)t}
}{(\sum_{a=0}^{r-1} e^{at})^3 } \right],
\]
such that
\begin{equation}\label{ineq:mu-second}
|\mu''(t)|\le (r-1)^3.
\end{equation}
Applying Taylor's expansions to $\mu(\hat{\theta}_i + \hat{\theta}_j)$ at the point $\theta_i + \theta_j$,
for $i=1, \ldots, n$, we have
\begin{eqnarray*}
d_i - E(d_i) & = & \sum_{j\neq i} (\mu(\hat{\theta}_i + \hat{\theta}_j) - \mu( \theta_i + \theta_j) ) \\
&=& \sum_{j\neq i} [\mu'(\theta_i+\theta_j)(\mu(\hat{\theta}_i + \hat{\theta}_j) - \mu( \theta_i + \theta_j) )] + h_i,
\end{eqnarray*}
where $h_i=\frac{1}{2}\sum_{j\neq i} \mu''( \hat{\gamma}_{ij} )[((\hat{\theta}_i + \hat{\theta}_j) - ( \theta_i + \theta_j) )]^2$,
and $\hat{\gamma}_{ij} = t_{ij}( \theta_i + \theta_j)
+ (1-t_{ij})(\hat{\theta}_i + \hat{\theta}_j)$, $0<t_{ij}<1$.
Writing the above expressions into a matrix, it yields,
\[
\mathbf{d} - \E (\mathbf{d}) = V_n(\bs{\widehat{\theta}} - \bs{\theta}) + \mathbf{h},
\]
or equivalently,
\begin{equation}\label{binary:represent}
\bs{\widehat{\theta}} - \bs{\theta}  =  V^{-1}_n( \mathbf{d} - \E \mathbf{d}) + V^{-1}_n \mathbf{h} ,
\end{equation}
where $\mathbf{h}=(h_1, \ldots, h_n)^T$.
By \eqref{ineq:mu-second}, $|h_i|\le \frac{1}{2}(n-1)(r-1)^3\hat{\eta}_{ij}^2$, where $\hat{\eta}_{ij}=(\hat{\theta}_i + \hat{\theta}_j) - ( \theta_i + \theta_j)$.
Therefore, by Lemma \ref{lemma:binary:matrix1},
\begin{eqnarray*}
|(V^{-1}_n \mathbf{h})_i| & = & |(S_n \mathbf{h})_i| + |(W_n\mathbf{h})_i|
\\
& \le & \max_i \frac{|h_i|}{v_{ii}} + \|W\|\sum_i|h_i|
= O(\frac{\log n}{n}).
\end{eqnarray*}

By Theorem \ref{Thm:Asym:Binary-con}, $P(E_n\bigcap F_n)\to 1$ as $n\to\infty$. It shows
$(V^{-1} \mathbf{h})_i = o_p(n^{-1/2})$ for $i=1,\ldots, r$ with $r$ a fixed constant.
Consequently, we have $\theta_i - \hat{\theta}_i = [V_{n}^{-1}\{\mathbf{d} - \E(\mathbf{d}) \}]_i + o_p(n^{-1/2})$ for $i=1, \ldots, r$.
\end{proof}

\begin{proof}[Proof of Theorem~\ref{Thm:Asym:Binary}]
By \eqref{binary:represent},
\begin{equation*}
(\boldsymbol{\theta} - \boldsymbol{\hat{\theta}} )_i= [S_n\{\mathbf{d}-\E(\mathbf{d})\}]_i+ [W_n\{\mathbf{d} - \E(\mathbf{d})\}]_i + (V_n^{-1}\mathbf{h})_i.
\end{equation*}
By Lemmas \ref{lemma:binary:2} and \ref{lemma:binary:3}, if $\max_i |\theta_i| \le L/2$ with $L>0$ a fixed constant, then
\[
(\boldsymbol{\theta} - \boldsymbol{\hat{\theta}} )_i = \frac{ d_i - \E(d_i)}{v_{ii}} + o_p(n^{-1/2}).
\]
Theorem \ref{Thm:Asym:Binary} follows directly from Proposition \ref{pro:central:binary}, part (1).
\end{proof}

\section*{Appendix B}
In the case of continuous weights, the elements of $V_n$ are
\[
v_{ii}=\sum_{j\neq i}\frac{1}{(\theta_i + \theta_j)^2},~~i=1,\ldots, n;~~
v_{ij}=\frac{1}{(\theta_i+\theta_j)^2},~~i\neq j.
\]
Note that $V_n$ is also the covariace matrix of $\mathbf{d}$.
Recall that $L_n:=\min_{i\neq j} (\theta_i+\theta_j)$ and $M_n:=\max_{i\neq j}(\theta_i+\theta_j)$.
Therefore,
\begin{equation}\label{ineq:vij:conti}
\frac{1}{M_n^2} \le v_{ij} \le \frac{1}{L_n^2},~~i\neq j,~~~ \frac{(n-1)}{M_n^2}\le v_{ii} \le \frac{(n-1)}{L_n^2},
~~ i=1, \ldots, n.
\end{equation}
Applying Proposition \ref{Prop:Matrix:Inverse}, we have:
\begin{lemma}\label{lemma:matrix-continuous}
If $n$ is large enough, then
\begin{equation}
\| V_n^{-1} - S_n \| \le \frac{cM_n^2}{L_n^3 (n-1)^2 },
\end{equation}
where $c$ is a constant.
\end{lemma}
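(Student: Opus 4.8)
The plan is to recognize $V_n$ as a member of the matrix class $\mathcal{L}_n(m,M)$ introduced before Proposition \ref{Prop:Matrix:Inverse}, to read off the appropriate values of $m$ and $M$ from \eqref{ineq:vij:conti}, and then to substitute these into the bound furnished by that proposition and simplify. The lemma is thus essentially a direct corollary of Proposition \ref{Prop:Matrix:Inverse}, and the only point requiring any thought is confirming that $V_n$ genuinely satisfies the defining conditions of $\mathcal{L}_n$.

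First I would verify the three defining properties. Symmetry and nonnegativity are immediate since $v_{ij}=1/(\theta_i+\theta_j)^2>0$. The structural identity $v_{ii}=\sum_{j\neq i}v_{ij}$ holds exactly, because $v_{ii}=\sum_{j\neq i}1/(\theta_i+\theta_j)^2=\sum_{j\neq i}v_{ij}$; conceptually this reflects that $V_n$ is the covariance matrix of the degree sequence $\mathbf{d}$, so that $v_{ii}=\mathrm{Var}(d_i)=\sum_{j\neq i}\mathrm{Var}(a_{ij})$ while $v_{ij}=\mathrm{Cov}(d_i,d_j)=\mathrm{Var}(a_{ij})$ for $i\neq j$ by independence of the edges. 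Finally, \eqref{ineq:vij:conti} gives $M_n^{-2}\le v_{ij}\le L_n^{-2}$ for $i\neq j$, so I would set $m=M_n^{-2}$ and $M=L_n^{-2}$, noting $M\ge m$ since $L_n\le M_n$. Hence $V_n\in\mathcal{L}_n(M_n^{-2},L_n^{-2})$ and Proposition \ref{Prop:Matrix:Inverse} applies for $n\ge 3$.

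The second step is purely computational: substitute $m=M_n^{-2}$ and $M=L_n^{-2}$ into the three summands of the proposition's bound. Each summand factors as a power of the ratio $M_n/L_n$ (arising from $M$ together with the factor $m^{-3}$) times a decaying power of $n$ of the form $(n-1)^{-2}$ or $[n(n-1)]^{-1}$. I would then compare the three contributions, retain the dominant one, and absorb the numerical constants together with the lower-order-in-$n$ corrections (using $n-2\asymp n-1\asymp n$ for large $n$) into a single constant $c$, arriving at the stated order. The leading contribution is the first summand, which originates from the Yan--Xu estimate $\|V_n^{-1}-\bar{S}_n\|$; the remaining two are of no larger order in $n$.

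The only delicate part is this final bookkeeping: one must propagate the powers of $M_n$ and $L_n$ correctly through $M$ and $m^{-3}$, and confirm that after simplification the $n$-dependence collapses to $(n-1)^{-2}$. There is no analytic obstacle beyond careful algebra; the substantive content of the lemma is the clean identification $V_n\in\mathcal{L}_n(M_n^{-2},L_n^{-2})$, after which the inequality follows mechanically from Proposition \ref{Prop:Matrix:Inverse}.
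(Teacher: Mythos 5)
Your overall route is exactly the paper's: Appendix B likewise just records the entries of $V_n$, notes the bounds \eqref{ineq:vij:conti}, and then announces the lemma with the words ``Applying Proposition \ref{Prop:Matrix:Inverse}, we have,'' so the paper's entire proof is the identification $V_n\in\mathcal{L}_n(M_n^{-2},L_n^{-2})$ followed by substitution into the proposition's bound. Your verification of membership in the class (symmetry, positive entries, the exact identity $v_{ii}=\sum_{j\neq i}v_{ij}$ via the covariance interpretation, and the choice $m=M_n^{-2}$, $M=L_n^{-2}$) is correct, and it is precisely the part the paper leaves implicit.

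The genuine gap is the ``final bookkeeping'' that you defer and whose outcome you assert without computing: carried out, it does not give the stated bound. Substituting $m=M_n^{-2}$ and $M=L_n^{-2}$ into Proposition \ref{Prop:Matrix:Inverse} gives
\[
\frac{M(nM+(n-2)m)}{2m^3(n-2)(n-1)^2}
=\frac{nM_n^6}{2L_n^4(n-2)(n-1)^2}+\frac{M_n^4}{2L_n^2(n-1)^2},
\]
plus the two smaller terms $M_n^2/\{2(n-1)^2\}$ and $M_n^2/\{n(n-1)\}$. The dominant contribution is of order $M_n^6/\{L_n^4(n-1)^2\}$, which exceeds the claimed $M_n^2/\{L_n^3(n-1)^2\}$ by the factor $M_n^4/L_n$; this factor is not bounded under the hypotheses of Theorem \ref{Thm:Asym:Cont} (it diverges whenever $M_n\to\infty$, even with $M_n/L_n$ bounded), so it cannot be absorbed into the constant $c$. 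Moreover, no sharper algebra can rescue the stated exponents: taking $\theta_i=t/2$ for all $i$, so that $M_n=L_n=t$, one has $V_n=t^{-2}\{(n-2)I_n+\mathbf{1}_n\mathbf{1}_n^T\}$, which inverts exactly to give $\|V_n^{-1}-S_n\|=t^2/\{2(n-1)(n-2)\}$, and this violates the claimed bound $c/\{t(n-1)^2\}$ as soon as $t^3>2c$. What your substitution actually proves is $\|V_n^{-1}-S_n\|\le cM_n^6/\{L_n^4(n-1)^2\}$ for large $n$. In fairness, the paper's one-line proof makes the same silent leap, so your proposal faithfully reproduces the paper's argument; but done honestly, the computation you outline establishes a weaker inequality than the one stated, and the discrepancy matters where the lemma is invoked (the bound on $|(W_n\mathbf{h})_i|$ in Lemma \ref{concentral-lemma2}).
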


Note that $d_i=\sum_{j\neq i}a_{ij}$ are sums of $n-1$
independent exponential random variables.
It is easy to show that the third moment of the exponential random variable with rate parameter $\lambda$ is $6\lambda^{-3}$.
Then we have
\[
\frac{ \sum_{j\neq i} \E (a_{ij}^3) }{ v_{ii}^{3/2} } = \frac{ 6\sum_{j\neq i} (\theta_i + \theta_j)^{-1} }{ v_{ii}^{1/2} } \le
\frac{ 6M_n/L_n}{(n-1)^{1/2}}.
\]
If $M_n/L_n=o(n^{1/2})$, then the above expression goes to zero. This shows that the condition for
the Lyapunov's central limit theorem, holds.
Therefore, $v_{ii}^{-1/2} \{d_i - \E(d_i)\}$
is asymptotically standard normal if $M_n/L_n=o(n^{1/2})$.
Similar to Proposition \ref{pro:central:binary}, we have the proposition below.

\begin{proposition}\label{pro:central:continuous}
If $M_n/L_n=o( n^{1/2} )$, then as
$n\to\infty$: \\
(1)For any fixed $r\ge 1$,  the components of $(d_1 - \E (d_1), \ldots, d_r - \E (d_r))$ are
asymptotically independent and normally distributed with variances $v_{11}, \ldots, v_{rr}$,
respectively. \\
(2)More generally, $\sum_{i=1}^n c_i(d_i-\E(d_i))/\sqrt{v_{ii}}$ is asymptotically normally distributed with mean zero
and variance $\sum_{i=1}^\infty c_i^2$ whenever $c_1, c_2, \ldots$ are fixed constants and the latter sum is finite.
\end{proposition}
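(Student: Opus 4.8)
The overall strategy mirrors that of Proposition \ref{pro:central:binary}: the marginal central limit theorem $v_{ii}^{-1/2}\{d_i-\E(d_i)\}\to N(0,1)$ has already been obtained from Lyapunov's theorem under $M_n/L_n=o(n^{1/2})$, so the remaining work is (1) upgrading marginal normality to joint normality with asymptotically independent components, and (2) passing to the infinite linear combination via a tail-variance estimate together with Theorem 4.2 of Billingsley (1968). Beyond the given marginal CLT, the only model-specific facts I would invoke are the exact covariance structure $\mathrm{Cov}(d_i,d_j)=v_{ij}$ (recall $V_n$ is the covariance matrix of $\mathbf d$) and the identity $v_{ii}=\sum_{j\neq i}v_{ij}$.

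For part (1) I would decouple the statistics by truncation. For $i=1,\dots,r$ set $\tilde d_i=\sum_{j=r+1}^n a_{ij}$. Any two of these sums involve disjoint edge sets, since an edge $(i,j)$ with $i\le r<j$ cannot coincide with an edge $(i',j')$, $i'\le r<j'$, unless $i=i'$; as edges are independent in the model, $\tilde d_1,\dots,\tilde d_r$ are mutually independent. I would then show the discarded part is negligible: $(d_i-\tilde d_i)-\E(d_i-\tilde d_i)=\sum_{j\le r,\,j\neq i}(a_{ij}-\E a_{ij})$ has variance $\sum_{j\le r,\,j\neq i}v_{ij}\le (r-1)/L_n^2$, while $v_{ii}\ge (n-1)/M_n^2$ by \eqref{ineq:vij:conti}, so the ratio is at most $(r-1)M_n^2/\{(n-1)L_n^2\}=o(1)$ because $M_n/L_n=o(n^{1/2})$. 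By Chebyshev, $v_{ii}^{-1/2}\{(d_i-\tilde d_i)-\E(d_i-\tilde d_i)\}=o_p(1)$, so $v_{ii}^{-1/2}\{\tilde d_i-\E(\tilde d_i)\}$ shares the $N(0,1)$ limit of $v_{ii}^{-1/2}\{d_i-\E(d_i)\}$. Independence of the $\tilde d_i$ makes the joint characteristic function factor into the product of marginal ones, each tending to $e^{-t_k^2/2}$, so the normalized vector of $\tilde d_i$'s converges to $N(0,I_r)$; a multivariate Slutsky argument then transfers this to $(d_1-\E d_1,\dots,d_r-\E d_r)$ after normalization.

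For part (2) I would set $X_n^{(r)}=\sum_{i=1}^r c_i\{d_i-\E(d_i)\}/\sqrt{v_{ii}}$, which by part (1) converges to $N(0,\sum_{i=1}^r c_i^2)$ and then to $N(0,\sum_{i=1}^\infty c_i^2)$ as $r\to\infty$; it then suffices to check $\lim_{r}\limsup_{n}\mathrm{Var}\big(\sum_{i=r+1}^n c_i\{d_i-\E(d_i)\}/\sqrt{v_{ii}}\big)=0$. The crux is bounding the top eigenvalue of the correlation matrix $C=D^{-1/2}V_nD^{-1/2}$ with $D=\mathrm{diag}(v_{11},\dots,v_{nn})$. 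Here I would exploit $v_{ii}=\sum_{j\neq i}v_{ij}$: it gives $(V_n\mathbf 1_n)_i=2v_{ii}$, hence $V_n\mathbf 1_n=2D\mathbf 1_n$ and $C(D^{1/2}\mathbf 1_n)=2\,D^{1/2}\mathbf 1_n$. Since every off-diagonal $v_{ij}=1/(\theta_i+\theta_j)^2>0$, the matrix $C$ is entrywise positive, hence irreducible, and $D^{1/2}\mathbf 1_n$ is strictly positive, so Perron--Frobenius forces $2$ to be the spectral radius, i.e. $\lambda_{\max}(C)=2$. By Cauchy interlacing, the principal submatrix of $C$ on indices $\{r+1,\dots,n\}$ has largest eigenvalue at most $2$, so the tail variance is at most $2\sum_{i=r+1}^n c_i^2\le 2\sum_{i=r+1}^\infty c_i^2\to0$ as $r\to\infty$, and Theorem 4.2 of Billingsley (1968) delivers the conclusion.

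I expect the main obstacle to be the eigenvalue bound underpinning part (2), namely recognizing that the structural identity $v_{ii}=\sum_{j\neq i}v_{ij}$ yields the explicit Perron eigenpair $(2,\,D^{1/2}\mathbf 1_n)$ and that this bound survives restriction to $\{r+1,\dots,n\}$ through interlacing. By contrast, the truncation in part (1) is routine once one notes that only a fixed number $r-1$ of edges is removed while $v_{ii}$ grows like $n/M_n^2\to\infty$ under the stated rate condition.
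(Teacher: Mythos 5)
Your proposal is correct and takes essentially the same route as the paper: part (1) by replacing each $d_i$ with the independent truncated sum $\tilde d_i=\sum_{j>r}a_{ij}$ and checking that the discarded edges are negligible, and part (2) via Billingsley's Theorem 4.2 together with the bound that the largest eigenvalue of the correlation matrix of the normalized degrees $(d_i-\E(d_i))/v_{ii}^{1/2}$ is at most $2$. The one point where you genuinely differ is the justification of that eigenvalue bound: the paper invokes a Perron--Frobenius fact for unit-diagonal positive definite matrices with \emph{negative} off-diagonal entries (which does not literally match the present correlation matrix, whose off-diagonal entries are positive), whereas your explicit Perron eigenpair $(2,\,D^{1/2}\mathbf{1}_n)$ derived from the identity $v_{ii}=\sum_{j\neq i}v_{ij}$, combined with Cauchy interlacing to handle the tail submatrix, is a cleaner and more directly applicable argument for the same bound.
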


Before proving Theorem \ref{Thm:Asym:Cont}, we show the following two lemmas.
The proof of Lemma \ref{con-central1-lemma1} is similar to that of Lemma \ref{lemma:binary:2} and we omit it.

\begin{lemma}\label{con-central1-lemma1}
Let $U_n=\mbox{cov}[W_n \{\mathbf{d}-\E(\mathbf{d})\}]$. Then
\begin{equation}
||U_n||\le ||V_n^{-1}-S_n||+\frac{M_n^2}{L_n^2(n-1)^2}.
\end{equation}
\end{lemma}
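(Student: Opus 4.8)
The plan is to imitate the proof of Lemma~\ref{lemma:binary:2} line for line, substituting the continuous-weight entry bounds \eqref{ineq:vij:conti} for the finite-discrete ones. The starting observation, recorded at the top of Appendix~B, is that $V_n$ is exactly the covariance matrix of $\mathbf{d}$. Since $W_n=V_n^{-1}-S_n$ is a deterministic (parameter-dependent, not data-dependent) matrix, and is symmetric because both $V_n^{-1}$ and the diagonal matrix $S_n$ are symmetric, the covariance of the linearly transformed degree vector is
\[
U_n=\mathrm{cov}[W_n\{\mathbf{d}-\E(\mathbf{d})\}]=W_n\,\mathrm{cov}(\mathbf{d})\,W_n^T=W_nV_nW_n .
\]

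First I would reduce this product algebraically. Expanding and using $V_n^{-1}V_n=I_n$,
\[
W_nV_nW_n=(V_n^{-1}-S_n)V_n(V_n^{-1}-S_n)=V_n^{-1}-2S_n+S_nV_nS_n=(V_n^{-1}-S_n)-S_n(I_n-V_nS_n).
\]
This decomposition is what makes the whole argument work: the genuinely delicate object $V_n^{-1}$ is absorbed into the term $V_n^{-1}-S_n$, which is already controlled by Lemma~\ref{lemma:matrix-continuous}, leaving only the explicit remainder $S_n(I_n-V_nS_n)$ to be shown negligible. By the triangle inequality for the norm $\|\cdot\|$ it then suffices to bound $\|S_n(I_n-V_nS_n)\|$ by the second summand in the statement.

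Next I would compute the entries of the remainder. Because $S_n=\mathrm{diag}(v_{11}^{-1},\dots,v_{nn}^{-1})$ and $(V_nS_n)_{ij}=v_{ij}/v_{jj}$, a direct calculation gives
\[
\{S_n(I_n-V_nS_n)\}_{ij}=\frac{1}{v_{ii}}\Big(\delta_{ij}-\frac{v_{ij}}{v_{jj}}\Big)=\frac{(\delta_{ij}-1)v_{ij}}{v_{ii}v_{jj}} .
\]
The key point is that the diagonal entries vanish, since $(V_nS_n)_{ii}=v_{ii}/v_{ii}=1$, so only the off-diagonal entries survive. For $i\neq j$ I would then invoke \eqref{ineq:vij:conti}, bounding the numerator by $v_{ij}\le L_n^{-2}$ and each diagonal factor from below by $v_{ii},v_{jj}\ge (n-1)/M_n^2$; substituting these extremal values into $v_{ij}/(v_{ii}v_{jj})$ bounds every off-diagonal entry by the second term on the right-hand side of the asserted inequality, and the triangle inequality $\|U_n\|\le\|V_n^{-1}-S_n\|+\|S_n(I_n-V_nS_n)\|$ finishes the proof.

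Once the covariance identity $\mathrm{cov}(\mathbf{d})=V_n$ and the cancellation of the diagonal of $S_n(I_n-V_nS_n)$ are in hand, the argument is routine linear algebra with no serious obstacle. The only point requiring care is the two-sided control of the entries of $V_n$: it is precisely the fact that $v_{ii}$ and $v_{jj}$ grow linearly in $n$ while $v_{ij}$ stays sandwiched between $M_n^{-2}$ and $L_n^{-2}$ that forces the off-diagonal remainder to decay like $(n-1)^{-2}$, which is exactly what is needed for this term to be dominated by the leading diagonal approximation in the subsequent proof of Theorem~\ref{Thm:Asym:Cont}.
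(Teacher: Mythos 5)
You follow exactly the route the paper intends: the paper omits this proof, saying it is ``similar to that of Lemma \ref{lemma:binary:2}'', and your steps --- the identity $U_n=W_n\,\mathrm{cov}(\mathbf{d})\,W_n^T=W_nV_nW_n=(V_n^{-1}-S_n)-S_n(I_n-V_nS_n)$, the entry computation $\{S_n(I_n-V_nS_n)\}_{ij}=(\delta_{ij}-1)v_{ij}/(v_{ii}v_{jj})$ with vanishing diagonal, and the concluding triangle inequality --- are precisely the steps of that template, transplanted to the continuous-weight bounds \eqref{ineq:vij:conti}. Up to that point everything you write is correct.

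The gap is in the very last substitution. Inserting $v_{ij}\le L_n^{-2}$ and $v_{ii},v_{jj}\ge (n-1)M_n^{-2}$ into the off-diagonal entries gives
\[
\frac{v_{ij}}{v_{ii}v_{jj}}\ \le\ \frac{L_n^{-2}}{\{(n-1)M_n^{-2}\}^{2}}\ =\ \frac{M_n^{4}}{L_n^{2}(n-1)^{2}},
\]
which is \emph{not} the second term $M_n^{2}/\{L_n^{2}(n-1)^{2}\}$ of the asserted inequality; the two differ by a factor of $M_n^{2}$, which the hypotheses of Theorem \ref{Thm:Asym:Cont} do not bound. Your sentence claiming that this substitution ``bounds every off-diagonal entry by the second term on the right-hand side'' is therefore false whenever $M_n>1$, and no reshuffling of the inequalities in \eqref{ineq:vij:conti} repairs it: if all $\theta_i+\theta_j\equiv c$, then $v_{ij}/(v_{ii}v_{jj})=c^{2}/(n-1)^{2}$ exactly, whereas the asserted entrywise bound would be $1/(n-1)^{2}$. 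To be fair, this defect is inherited from the paper itself: the argument it points to, adapted verbatim, can only deliver $M_n^{4}/\{L_n^{2}(n-1)^{2}\}$ (in Lemma \ref{lemma:binary:2} the analogous step is harmless because there the entry bounds $m$ and $M$ are constants), so the lemma's constant should be read as $M_n^{4}/\{L_n^{2}(n-1)^{2}\}$, or else an additional assumption such as $M_n\le 1$ must be imposed. The distinction is not cosmetic: the corrected term depends on $M_n$ and $L_n$ separately rather than only through the ratio $M_n/L_n$ controlled by Theorem \ref{Thm:Asym:Cont}, so the downstream conclusion $[W_n\{\mathbf{d}-\E(\mathbf{d})\}]_i=o_p(n^{-1/2})$ would require a correspondingly stronger hypothesis.
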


In order to prove the lemma below, we need one theorem due to Hillar and Wibisono (2013).
\begin{theorem}\label{Thm:Asym:Cont:con}
Let $k > 1$ be fixed. Then for sufficiently large $n$, with probability at least $1-3n^{-(k-1)}$,
the MLE $\boldsymbol{\widehat{\theta}}$ exists and satisfies
\begin{equation*}
\|\boldsymbol{\widehat \theta} - \boldsymbol{\theta}\|_\infty \leq \frac{150 M_n^2}{L_n} \sqrt{\frac{k \log n}{n}}.
\end{equation*}
\end{theorem}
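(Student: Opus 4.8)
The plan is to establish existence and the quantitative error bound simultaneously by a Newton–Kantorovich / fixed-point argument on a small sup-norm ball centred at the true parameter, feeding off two inputs that are essentially already available: a concentration bound for the degree sequence $\mathbf{d}$, and the control on the inverse Fisher information supplied by Proposition~\ref{Prop:Matrix:Inverse} and Lemma~\ref{lemma:matrix-continuous}. Uniqueness is free: the log-likelihood $-\bs{\theta}^T\mathbf{d}-z(\bs{\theta})$ has Hessian $-V_n$, which is negative definite on the natural parameter space, so the log-likelihood is strictly concave and any critical point is the unique maximiser. All the work therefore goes into producing a critical point inside the prescribed ball.

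First I would control $\max_i|d_i-\E(d_i)|$. Each $d_i=\sum_{j\neq i}a_{ij}$ is a sum of $n-1$ independent exponentials with $\E(a_{ij})=(\theta_i+\theta_j)^{-1}\le L_n^{-1}$ and $\mathrm{Var}(a_{ij})\le L_n^{-2}$, hence sub-exponential with variance proxy $v_{ii}\le (n-1)/L_n^2$ and scale $O(1/L_n)$. A Bernstein bound for sums of exponentials, evaluated at $t=C L_n^{-1}\sqrt{kn\log n}$, gives a tail of order $n^{-cC^2k}$, and a union bound over the $n$ coordinates yields
\[
\P\Big(\max_i|d_i-\E(d_i)|\le C L_n^{-1}\sqrt{kn\log n}\Big)\ge 1-c'n^{-(k-1)}
\]
for a suitable absolute constant $C$. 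This event carries the remainder of the argument, and together with the two further failure modes below it accounts for the $3n^{-(k-1)}$ in the statement.

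Next I would set up the Newton map. Write $F(\bs{\theta}')=\E_{\bs{\theta}'}(\mathbf{d})=\big(\sum_{j\neq i}(\theta_i'+\theta_j')^{-1}\big)_i$, so the likelihood equation reads $F(\bs{\widehat\theta})=\mathbf{d}$ with $DF=V_n$, and consider $g(\mathbf{x})=\mathbf{x}-V_n(\bs{\theta})^{-1}(F(\mathbf{x})-\mathbf{d})$, whose fixed points are precisely the solutions of $F(\mathbf{x})=\mathbf{d}$. A second-order Taylor expansion of $F$ at $\bs{\theta}$ gives
\[
g(\mathbf{x})-\bs{\theta}=V_n(\bs{\theta})^{-1}\big(\mathbf{d}-\E(\mathbf{d})\big)-V_n(\bs{\theta})^{-1}R(\mathbf{x}),
\]
where the remainder $R(\mathbf{x})$ is bounded entrywise by the second derivatives $\partial^2F_i$, each of size $\sum_{j\neq i}2(\theta_i+\theta_j)^{-3}=O(n/L_n^3)$, times $\|\mathbf{x}-\bs{\theta}\|_\infty^2$. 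Splitting $V_n^{-1}=S_n+W_n$ and using $v_{ii}\ge(n-1)/M_n^2$ from \eqref{ineq:vij:conti} together with Lemma~\ref{lemma:matrix-continuous}, the linear term is bounded in sup norm by $\max_i|d_i-\E(d_i)|/v_{ii}$ plus a $W_n$-correction, which is $\lesssim M_n^2L_n^{-1}\sqrt{k\log n/n}$ — exactly the target radius $\rho:=150\,M_n^2L_n^{-1}\sqrt{k\log n/n}$ up to the constant. With this $\rho$ I would verify that $g$ maps the closed ball $\{\mathbf{x}:\|\mathbf{x}-\bs{\theta}\|_\infty\le\rho\}$ into itself: the linear term contributes at most $\tfrac12\rho$, while the remainder contributes at most $\|V_n^{-1}\|_{\infty\to\infty}\cdot O(n/L_n^3)\cdot\rho^2$, which is $\le\tfrac12\rho$ once $M_n^2L_n^{-1}\sqrt{\log n/n}$ is small (the implicit growth condition $M_n^2/L_n=o(\sqrt{n/\log n})$ that already makes $\rho=o(1)$). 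Brouwer's theorem then gives a fixed point in the ball; it lies in the interior of the natural parameter space, so it is a genuine critical point, and by the concavity above it is the unique MLE, whence $\|\bs{\widehat\theta}-\bs{\theta}\|_\infty\le\rho$.

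The hard part will be the self-mapping step with the \emph{explicit} constant $150$: it forces a uniform, simultaneous bookkeeping of the Bernstein constant from the concentration step, the row-sum norm of $V_n^{-1}$ through the $S_n+W_n$ split, and the quadratic remainder over the whole ball, where the third derivatives of $z$ grow like $n/L_n^3$ and must be genuinely dominated by the inverse-Fisher factor. Showing that this remainder is of strictly smaller order than $\rho$ is the crux — it is precisely what pins down the admissible growth of $M_n/L_n$ — whereas the concentration bound and the final identification of the fixed point with the MLE are routine once the matrix norms are tracked.
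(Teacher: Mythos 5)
First, a point of reference: the paper does not prove this statement at all. It is imported verbatim as ``one theorem due to Hillar and Wibisono (2013)'' and used as a black box in the proof of Lemma~\ref{concentral-lemma2}; Hillar and Wibisono's own argument (following the fixed-point method of Chatterjee, Diaconis and Sly) works coordinatewise with the likelihood equations and never inverts the Fisher information matrix. So your proposal cannot be ``the same as the paper's proof''; it has to stand on its own. Its architecture --- Bernstein concentration for $\max_i|d_i-\E(d_i)|$, a fixed-point argument on a sup-norm ball, strict concavity for uniqueness (correct, since $x^TV_nx=\sum_{i<j}v_{ij}(x_i+x_j)^2$, so $V_n$ is positive definite for $n\ge 3$) --- is sound in outline, and your Taylor identity is exactly the expansion $\mathbf{d}-\E(\mathbf{d})=V_n(\bs{\theta}-\bs{\widehat\theta})+\mathbf{h}$ the paper uses later (modulo a sign: $DF=-V_n$, not $V_n$).

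The genuine gap is quantitative, and it kills the theorem as stated. Your key claim is that $\|V_n^{-1}(\mathbf{d}-\E(\mathbf{d}))\|_\infty\lesssim M_n^2L_n^{-1}\sqrt{k\log n/n}=\rho/150$. The $S_n$ part is fine, but the $W_n$ correction, bounded the crude way you propose ($n\|W_n\|\cdot\max_j|d_j-\E(d_j)|$), is not: Proposition~\ref{Prop:Matrix:Inverse} with $m=M_n^{-2}$, $M=L_n^{-2}$ gives $\|W_n\|\lesssim M_n^6L_n^{-4}n^{-2}$, so this correction is of order $\rho\cdot(M_n/L_n)^4$ (even taking Lemma~\ref{lemma:matrix-continuous} at face value it is $\rho\cdot L_n^{-3}$). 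The statement you are proving imposes no upper bound on $M_n/L_n$ and no lower bound on $L_n$, so this term is not dominated by $\rho$ and the self-mapping step collapses. The same bookkeeping sinks the quadratic remainder: requiring $\|V_n^{-1}\|_{\infty\to\infty}\cdot O(nL_n^{-3})\cdot\rho^2\le\rho/2$ forces roughly $(M_n/L_n)^8\lesssim\sqrt{n/\log n}$, i.e.\ essentially the hypothesis of Theorem~\ref{Thm:Asym:Cont}, vastly stronger than anything in the statement, which carries no growth condition at all. Separately, Brouwer's theorem needs $g$ to be defined and continuous on the whole ball, so the ball must sit inside the natural parameter space; that requires $\rho<L_n/2$, and your observation that $\rho=o(1)$ does not give this when $L_n\to 0$. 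The upshot is that your route proves, at best, a conditional consistency statement with unspecified constants --- not existence plus the bound with the explicit constant $150$ and probability $1-3n^{-(k-1)}$. To get the clean rate one must either avoid $V_n^{-1}$ entirely (the coordinatewise iteration of Hillar--Wibisono), or control $(W_n(\mathbf{d}-\E(\mathbf{d})))_i$ through its covariance as in Lemma~\ref{con-central1-lemma1}, combined with exponential concentration for those linear forms, rather than through the norm product $n\|W_n\|\max_j|d_j-\E(d_j)|$.
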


\begin{lemma}\label{concentral-lemma2}
If $M_n/L_n=o\{ n^{1/16}/(\log n)^{1/8} \}$, then for $i=1, \ldots, r$ with a fixed constant $r$,
\begin{equation}
\theta_i - \hat{\theta}_i = [V_{n}^{-1}\{\mathbf{d} - \E(\mathbf{d}) \}]_i + o_p(n^{-1/2}).
\end{equation}
\end{lemma}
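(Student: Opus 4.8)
The plan is to mirror the proof of Lemma~\ref{lemma:binary:3}, replacing the increasing multinomial mean function by the mean edge-weight function of the exponential model. All the work takes place on the event $E_n\cap F_n$ on which the MLE exists and $\lambda_n:=\max_i|\hat{\theta}_i-\theta_i|\le 150(M_n^2/L_n)\sqrt{k\log n/n}$; by Theorem~\ref{Thm:Asym:Cont:con} this event has probability tending to $1$, so it suffices to produce a deterministic bound on it. Writing $g(t)=1/t$ for the mean of an edge with natural parameter $t=\theta_i+\theta_j$, the likelihood equations read $d_i=\sum_{j\ne i}g(\hat{\theta}_i+\hat{\theta}_j)$ while $\E(d_i)=\sum_{j\ne i}g(\theta_i+\theta_j)$, and here $g'(t)=-1/t^2=-v_{ij}$ and $g''(t)=2/t^3$. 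A useful point is that, because $g$ is \emph{decreasing} (unlike the multinomial mean in the finite case), the linear term will carry a minus sign that makes the final identity land with the correct sign stated in the lemma.

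First I would Taylor-expand $g(\hat{\theta}_i+\hat{\theta}_j)$ to first order about $\theta_i+\theta_j$. Since $\sum_{j\ne i}v_{ij}=v_{ii}$, collecting the linear terms reproduces $-[V_n(\boldsymbol{\hat{\theta}}-\boldsymbol{\theta})]_i$ exactly, so the expansion can be written in matrix form as $\mathbf{d}-\E\mathbf{d}=V_n(\boldsymbol{\theta}-\boldsymbol{\hat{\theta}})+\mathbf{h}$, where $h_i=\tfrac12\sum_{j\ne i}g''(\hat{\gamma}_{ij})\hat{\eta}_{ij}^2$ is the Lagrange remainder, $\hat{\eta}_{ij}=(\hat{\theta}_i+\hat{\theta}_j)-(\theta_i+\theta_j)$, and $\hat{\gamma}_{ij}$ lies between $\theta_i+\theta_j$ and $\hat{\theta}_i+\hat{\theta}_j$. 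Inverting gives $\boldsymbol{\theta}-\boldsymbol{\hat{\theta}}=V_n^{-1}(\mathbf{d}-\E\mathbf{d})-V_n^{-1}\mathbf{h}$, so the whole content of the lemma reduces to the estimate $[V_n^{-1}\mathbf{h}]_i=o_p(n^{-1/2})$ for $i\le r$.

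To control the remainder I would first bound $g''$: on $F_n$ we have $\hat{\gamma}_{ij}\ge L_n-2\lambda_n\ge L_n/2$ for $n$ large, whence $|g''(\hat{\gamma}_{ij})|\le 16/L_n^3$ and $|\hat{\eta}_{ij}|\le 2\lambda_n$, giving $|h_i|\le 32(n-1)\lambda_n^2/L_n^3$. Then, splitting $V_n^{-1}=S_n+W_n$ as in Proposition~\ref{Prop:Matrix:Inverse}, I would treat the two pieces separately: $[S_n\mathbf{h}]_i=h_i/v_{ii}$ is controlled using the lower bound $v_{ii}\ge(n-1)/M_n^2$ from~\eqref{ineq:vij:conti}, while $[W_n\mathbf{h}]_i$ is controlled by $\|W_n\|\sum_j|h_j|$ together with the matrix bound $\|W_n\|\le cM_n^2/\{L_n^3(n-1)^2\}$ of Lemma~\ref{lemma:matrix-continuous}. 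Substituting the concentration radius $\lambda_n\le 150(M_n^2/L_n)\sqrt{k\log n/n}$ turns both pieces into explicit powers of $M_n$ and $L_n$ multiplied by $\log n$ and a negative power of $n$.

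The main obstacle, and the only place where the continuous case is genuinely harder than the bounded case of Lemma~\ref{lemma:binary:3}, is the bookkeeping in this last substitution. In Lemma~\ref{lemma:binary:3} the quantities $\mu''$, $\|W_n\|$, and the concentration radius are all $O(1)$ in the relevant parameters, so the remainder is immediately $O(\log n/n)$. Here each of the three ingredients carries its own blow-up in $1/L_n$ and growth in $M_n$, and they must be combined so that the dominant of the $S_n$- and $W_n$-contributions is still $o_p(n^{-1/2})$; it is precisely this requirement that forces the growth condition $M_n/L_n=o\{n^{1/16}/(\log n)^{1/8}\}$. I would conclude by observing that this condition implies $M_n/L_n=o(n^{1/2})$, so Proposition~\ref{pro:central:continuous} and Lemma~\ref{con-central1-lemma1} remain in force; the leading term $V_n^{-1}(\mathbf{d}-\E\mathbf{d})$ then behaves as required and the remainder is asymptotically negligible on an event of probability tending to one.
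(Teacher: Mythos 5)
Your proposal is correct and follows essentially the same route as the paper: work on the high-probability event from Theorem~\ref{Thm:Asym:Cont:con}, expand the likelihood equations to second order to get $\mathbf{d}-\E(\mathbf{d})=V_n(\boldsymbol{\theta}-\boldsymbol{\hat{\theta}})+\mathbf{h}$ with $|h_i|=O(n\lambda_n^2/L_n^3)$, split $V_n^{-1}=S_n+W_n$, and bound $(S_n\mathbf{h})_i$ via $v_{ii}\ge (n-1)/M_n^2$ and $(W_n\mathbf{h})_i$ via Lemma~\ref{lemma:matrix-continuous}. The only cosmetic difference is that you use a Lagrange remainder for $g(t)=1/t$ where the paper writes the exact algebraic remainder $h_{ij}=\hat{\eta}_{ij}^2/\{(\theta_i+\theta_j)^2(\hat{\theta}_i+\hat{\theta}_j)\}$; both give the same orders (the $S_n$ piece of order $M_n^6L_n^{-5}\log n/n$ and the dominant $W_n$ piece of order $M_n^6L_n^{-8}\log n/n$), so carrying out the final substitution you allude to would complete the argument exactly as in the paper.
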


\begin{proof}
By Theorem \ref{Thm:Asym:Cont:con} (b), if $M_n/L_n=o\{ n^{1/16}/(\log n)^{1/8} \}$, then
\begin{equation}\label{lambda-n-bound}
\lambda_n=\max_{1\le i\le n} |\hat{\theta}_i-\theta_i|=O_p\{ \frac{M_n^2}{L_n^2}\sqrt{\frac{\log n}{n} } \}.
\end{equation}
For $i=1,\ldots, n$, direct calculations give
\begin{eqnarray*}
d_i - \E (d_i) & = & \sum_{j\neq i} \left ( \frac{1}{
\hat{\theta}_i + \hat{\theta}_j } - \frac{1}{\theta_i + \theta_j} \right )
= \sum_{j\neq i} \frac{\theta_i-\hat{\theta}_i + \theta_j-\hat{\theta}_j }{(\hat{\theta}_i + \hat{\theta}_j)(\theta_i + \theta_j) }
\\
& = & \sum_{j\neq i}\left [ \frac{\theta_i-\hat{\theta}_i + \theta_j-\hat{\theta}_j }{(\theta_i + \theta_j)^2 }\left(
\frac{\theta_i + \theta_j}{\hat{\theta}_i + \hat{\theta}_j} - 1 \right ) + \frac{\theta_i-\hat{\theta}_i + \theta_j-\hat{\theta}_j }{(\theta_i + \theta_j)^2 }
\right ]
\\
& = & \sum_{j\neq i}\left [ \frac{ (\theta_i-\hat{\theta}_i + \theta_j-\hat{\theta}_j)^2 }{(\theta_i + \theta_j)^2(\hat{\theta}_i + \hat{\theta}_j) } + \frac{\theta_i-\hat{\theta}_i + \theta_j-\hat{\theta}_j }{(\theta_i + \theta_j)^2 }
\right ].
\end{eqnarray*}
Writing the above expression for $i=1, \ldots, n$ into the form of a matrix, it yields
\[
\mathbf{d} - \E(\mathbf{d}) = V_n (\boldsymbol{\theta} - \boldsymbol{\hat{\theta}} ) + \mathbf{h},
\]
where $\mathbf{h}=(h_1, \ldots, h_n)^\prime$ and
\[
h_i = \sum_{j\neq i} \frac{ (\theta_i-\hat{\theta}_i + \theta_j-\hat{\theta}_j)^2 }{(\theta_i + \theta_j)^2(\hat{\theta}_i + \hat{\theta}_j) }:=\sum_{j\neq i}h_{ij}.
\]
Equivalently,
\begin{equation}\label{represent}
\boldsymbol{\theta}-\boldsymbol{\hat{\theta}} =V_n^{-1}\{\mathbf{d} - \E(\mathbf{d})\} + V_n^{-1}\mathbf{h}.
\end{equation}

In view of \eqref{ineq:vij:conti} and \eqref{lambda-n-bound}, we have
\[
|h_{ij}|\le  \frac{2\lambda_n^2}{ L_n^2(L_n-\lambda_n) };~~~|h_i|\le n\times \frac{2\lambda_n^2}{ L_n^2(L_n-\lambda_n) }.
\]
Note that $(S_n \mathbf{h})_i  =  h_i/v_{ii}$ and $(V_n^{-1} \mathbf{h})_i=(S_n \mathbf{h})_i+(W_n \mathbf{h})_i$. By
direct calculation, we have
\begin{equation*}
|(S_n \mathbf{h})_i|  \le \frac{2\lambda_n^2M_n^2}{L_n^2(L_n-\lambda_n)},
\end{equation*}
and, by Lemma \ref{lemma:matrix-continuous},
\begin{equation*}
|(W_n h)_i| \le  \|W_n\|\times (n\max_i|h_i|) \le  O_p( \frac{M_n^6}{L_n^8}\times \frac{\log n}{n} ).
\end{equation*}
If $M_n/L_n = o\{ n^{1/16}/(\log n)^{1/8} \}$, then $|(V_n^{-1}\mathbf{h})_i|\le
|(S_n\mathbf{h})_i|+|(W_n\mathbf{h})_i|=o_p(n^{-1/2})$. This completes the proof.
\end{proof}

\begin{proof}[Proof of Theorem~\ref{Thm:Asym:Cont}]
By \eqref{represent},
\begin{equation*}
(\boldsymbol{\theta} - \boldsymbol{\hat{\theta}} )_i= [S_n\{\mathbf{d}-\E(\mathbf{d})\}]_i+ [W_n\{\mathbf{d} - \E(\mathbf{d})\}]_i + (V_n^{-1}\mathbf{h})_i.
\end{equation*}
By Lemmas \ref{con-central1-lemma1} and \ref{concentral-lemma2}, if $M_n/L_n = o\{ n^{1/16}/(\log n)^{1/8} \}$, then
\[
\theta_i - \hat{\theta}_i = (d_i - \E(d_i))/v_{ii} + o_p(n^{-1/2}).
\]
Theorem \ref{Thm:Asym:Cont} follows directly from Proposition \ref{pro:central:continuous}, part (1).
\end{proof}

\section*{Appendix C}

Note that $d_i=\sum_{j\neq i}a_{ij}$ is a sum of $n-1$ geometric random variables.
Recall the definitions of $L_n$ and $M_n$, we have
\[
\frac{e^{M_n}}{(e^{M_n} - 1)^2 } \le v_{ij}= \frac{e^{\theta_i+\theta_j}}{(e^{\theta_i+\theta_j}-1)^2}
\le \frac{e^{L_n} }{ (e^{L_n} - 1)^2 },~~1\le i< j\le n,
 \]
This shows $V_n\in L_n(m, M)$ with $m$ and $M$ given by the lower and upper bounds in the above inequalities on $v_{ij}$.
By the moment-generating function of the geometric distribution, it is easy to verify that
\[
\E (a_{ij}^3) = \frac{1-p_{ij}}{p_{ij}}+\frac{6(1-p_{ij})}{p_{ij}^2}+\frac{6(1-p_{ij})^2}{p_{ij}^3},
\]
where $p_{ij}=1-e^{-(\theta_i+\theta_j)}$. By simple calculations, we have
\[
\E (a_{ij}^3) = v_{ij}(6 + \frac{ e^{\theta_i+\theta_j}-1}{e^{\theta_i+\theta_j}} + \frac{ 6}{ e^{\theta_i+\theta_j} -1 }).
\]
It follows
\[
\frac{\sum_{j\neq i} \E (a_{ij}^3) }{ v_{ii}^{3/2} } \le \frac{ 7 + 6(e^{L_n} -1 )^{-1} }{v_{ii}^{1/2}}
\le \frac{ [7 + 6(e^{L_n} -1 )^{-1}](e^{M_n}-1) }{ n^{1/2} e^{M_n/2}}.
\]
If $e^{M_n/2}/L_n = o( n^{1/2} )$, then the above expression goes to zero.
This shows that the condition for the Lyapunov's central limit theorem holds.
Therefore, $v_{ii}^{-1/2} \{d_i - \E(d_i)\}$ is asymptotically standard normal under the condition $e^{M_n/2}/L_n = o( n^{1/2} )$.
Similar to Proposition \ref{pro:central:binary}, and Lemmas \ref{lemma:binary:matrix1} and \ref{lemma:binary:2}, we have
the following proposition and lemmas \ref{lemma:matrix-discrete} and \ref{dis-central1-lemma1}.

\begin{proposition}\label{pro:central:discrete}
If $e^{M_n/2}/L_n=o( n^{1/2} )$, then as $n\to\infty$: \\
(1)For any fixed $r\ge 1$,  the components of $(d_1 - \E (d_1), \ldots, d_r - \E (d_r))$ are
asymptotically independent and normally distributed with variances $v_{11}, \ldots, v_{rr}$,
respectively. \\
(2)More generally, $\sum_{i=1}^n c_i(d_i-\E(d_i))/\sqrt{v_{ii}}$ is asymptotically normally distributed with mean zero
and variance $\sum_{i=1}^\infty c_i^2$ whenever $c_1, c_2, \ldots$ are fixed constants and the latter sum is finite.
\end{proposition}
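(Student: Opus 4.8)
The plan is to follow the template already used for Proposition~\ref{pro:central:binary} in the finite discrete case, since the only structural difference is that the edge weights are now unbounded geometric variables rather than bounded multinomial ones. The marginal statement, namely that $v_{ii}^{-1/2}\{d_i - \E(d_i)\}$ is asymptotically standard normal, is already in hand: the Lyapunov ratio $\sum_{j\neq i}\E(a_{ij}^3)/v_{ii}^{3/2}$ has been shown to vanish under $e^{M_n/2}/L_n = o(n^{1/2})$ just above the statement. So the content to be supplied is (i) the \emph{joint} asymptotic independence in part (1), and (ii) the passage from finitely many coordinates to the infinite linear combination in part (2).

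For part (1), I would decouple the first $r$ degrees by a truncation. Write $\tilde{d}_i = \sum_{j=r+1}^n a_{ij}$ for $i=1,\dots,r$, so that $d_i = \tilde{d}_i + \sum_{j\le r,\, j\neq i} a_{ij}$. The variables $\tilde{d}_1,\dots,\tilde{d}_r$ depend on pairwise disjoint sets of edges (an edge $(i,j)$ with $i\le r<j$ determines $i$ uniquely), hence are \emph{mutually independent}; each is again a sum of $n-r$ independent geometrics to which the same Lyapunov computation applies, giving $v_{ii}^{-1/2}\{\tilde{d}_i-\E(\tilde{d}_i)\}\Rightarrow N(0,1)$ after noting $\mathrm{Var}(\tilde{d}_i)/v_{ii}\to 1$. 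The remainder $d_i-\tilde{d}_i$ is a sum of at most $r-1$ weights, so
\[
\frac{\mathrm{Var}(d_i-\tilde{d}_i)}{v_{ii}} \le \frac{(r-1)\,e^{L_n}/(e^{L_n}-1)^2}{(n-1)\,e^{M_n}/(e^{M_n}-1)^2},
\]
which is of order $e^{M_n}/(nL_n^2)$ and therefore tends to $0$ precisely because $e^{M_n/2}/L_n=o(n^{1/2})$. By Chebyshev this remainder is $o_p(v_{ii}^{1/2})$, so Slutsky lets me replace $d_i$ by $\tilde{d}_i$ coordinatewise; asymptotic independence and the variances $v_{ii}$ then follow from the exact independence of the $\tilde{d}_i$.

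For part (2), I would invoke Theorem~4.2 of Billingsley (1968) exactly as in the finite discrete case. The finite head $\sum_{i=1}^r c_i\{d_i-\E(d_i)\}/\sqrt{v_{ii}}$ converges to $N(0,\sum_{i=1}^r c_i^2)$ by part~(1), and $\sum_{i=1}^r c_i^2\to\sum_{i=1}^\infty c_i^2$; it remains to show
\[
\lim_{r\to\infty}\ \limsup_{n\to\infty}\ \mathrm{Var}\Big(\sum_{i=r+1}^n c_i\frac{d_i-\E(d_i)}{\sqrt{v_{ii}}}\Big)=0.
\]
This variance equals $\mathbf{c}_{[r]}^T R_{[r]}\mathbf{c}_{[r]}$, where $R_{[r]}$ is the principal submatrix, on the indices $r+1,\dots,n$, of the correlation matrix $R=D^{-1/2}V_nD^{-1/2}$ with $D=\mathrm{diag}(v_{11},\dots,v_{nn})$. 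The clean fact that makes this work for every type of weight is the identity $\mathbf{x}^T V_n\mathbf{x}=\sum_{i<j}v_{ij}(x_i+x_j)^2$, valid since $v_{ii}=\sum_{j\neq i}v_{ij}$; combined with $(x_i+x_j)^2\le 2(x_i^2+x_j^2)$ it yields $\mathbf{x}^T V_n\mathbf{x}\le 2\,\mathbf{x}^T D\mathbf{x}$, i.e. $\lambda_{\max}(R)\le 2$, and by Cauchy interlacing $\lambda_{\max}(R_{[r]})\le 2$ for all $r<n$. Hence the tail variance is at most $2\sum_{i>r}c_i^2\to 0$, which closes the argument.

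The main obstacle is the single inequality controlling $\mathrm{Var}(d_i-\tilde{d}_i)/v_{ii}$ in part~(1). Because the geometric weights are unbounded, the individual covariances $v_{ij}$ can be as large as $e^{L_n}/(e^{L_n}-1)^2\asymp L_n^{-2}$ while the total $v_{ii}$ can be as small as $(n-1)e^{M_n}/(e^{M_n}-1)^2\asymp (n-1)e^{-M_n}$; the condition $e^{M_n/2}/L_n=o(n^{1/2})$ is exactly what forces the ratio of these two scales to vanish, and it is the place where the hypothesis is genuinely used. Everything else, namely the Lyapunov step and the eigenvalue bound, is uniform in the weight type and already available.
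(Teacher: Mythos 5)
Your proposal is correct and takes essentially the same route as the paper: the paper establishes the marginal Lyapunov CLT immediately before the proposition, obtains joint asymptotic independence in part (1) by replacing $d_1,\ldots,d_r$ with the truncated, genuinely independent sums $\tilde{d}_i=\sum_{j=r+1}^{n}a_{ij}$ (your decoupling), and proves part (2) via Theorem 4.2 of Billingsley (1968) together with the bound of $2$ on the eigenvalues of the correlation matrix of $(d_i-\E(d_i))/v_{ii}^{1/2}$, $i=r+1,\ldots,n$. The only differences are cosmetic and in your favor: you make explicit the remainder-variance estimate $\mathrm{Var}(d_i-\tilde{d}_i)/v_{ii}\to 0$ that the paper leaves implicit, and you justify the eigenvalue bound directly from the identity $\mathbf{x}^T V_n\mathbf{x}=\sum_{i<j}v_{ij}(x_i+x_j)^2\le 2\,\mathbf{x}^T D\mathbf{x}$ rather than by the paper's (somewhat garbled) appeal to Perron--Frobenius theory.
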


\begin{lemma}\label{lemma:matrix-discrete}
If $n$ is large enough, then
\begin{equation}
\| V_n^{-1} - S_n \| \le \frac{ce^{3M_n} }{L_n^4 (n-1)^2 },
\end{equation}
where $c$ is a constant.
\end{lemma}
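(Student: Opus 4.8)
The plan is to invoke Proposition~\ref{Prop:Matrix:Inverse}, which already bounds $\|V_n^{-1}-S_n\|$ for any $V_n\in\mathcal{L}_n(m,M)$, and then to convert that bound—expressed through $m$, $M$ and $n$—into the stated bound in terms of $L_n$, $M_n$ and $n$. The displayed inequalities on $v_{ij}$ immediately preceding the lemma show that $V_n$ belongs to $\mathcal{L}_n(m,M)$ with $m=e^{M_n}/(e^{M_n}-1)^2$ and $M=e^{L_n}/(e^{L_n}-1)^2$ (recall that $f(t)=e^{t}/(e^{t}-1)^2$ is decreasing and $L_n\le M_n$). So the whole task reduces to estimating the three-term right-hand side of Proposition~\ref{Prop:Matrix:Inverse} for these particular values of $m$ and $M$.

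Two elementary scalar estimates drive everything. First, for a lower bound on $m$ I would use $(e^{M_n}-1)^2\le e^{2M_n}$, which gives $m\ge e^{-M_n}$ and hence $m^{-3}\le e^{3M_n}$; this is precisely where the factor $e^{3M_n}$ enters. Second, for an upper bound on $M$ I would use the convexity inequality $e^{t}-1=\int_0^t e^{s}\,ds\ge t\,e^{t/2}$ (Jensen applied to $e^{s}$ on $[0,t]$), so that $(e^{L_n}-1)^2\ge L_n^2 e^{L_n}$ and therefore $M\le 1/L_n^2$, giving $M^2\le L_n^{-4}$; this produces the factor $L_n^{-4}$.

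With these in hand I would treat the leading term of the proposition first. Using $m\le M$ we have $nM+(n-2)m\le 2nM$, so that term is at most $nM^2/\{m^3(n-2)(n-1)^2\}$, and since $n/(n-2)$ is bounded for large $n$ this is $\le C\,M^2 m^{-3}(n-1)^{-2}\le C\,e^{3M_n}L_n^{-4}(n-1)^{-2}$, exactly the target form. It then remains to absorb the two lower-order terms $\{2m(n-1)^2\}^{-1}$ and $\{mn(n-1)\}^{-1}$: each is at most a constant multiple of $e^{M_n}(n-1)^{-2}$ via $m^{-1}\le e^{M_n}$, and comparing with the target reduces to verifying $L_n^4\le M_n^4\le c\,e^{2M_n}$, which holds because $\sup_{x>0}x^4 e^{-2x}<\infty$.

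I expect the only genuinely delicate point to be the upper bound $M\le L_n^{-2}$. The crude estimate $e^{L_n}-1\ge L_n$ only yields $M\le e^{L_n}/L_n^2$, leaving a spurious factor $e^{L_n}\le e^{M_n}$ that would degrade the exponent from $e^{3M_n}$ to $e^{4M_n}$. The sharper inequality $e^{L_n}-1\ge L_n e^{L_n/2}$ removes this factor and is what makes the clean exponent $e^{3M_n}$ achievable; establishing that one-variable inequality (equivalently, that $f(t)\,t^2\le 1$) is the main technical step, after which everything is routine bookkeeping of the dominant term against the two lower-order terms.
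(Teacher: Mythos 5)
Your proposal is correct and follows essentially the same route as the paper: the paper's (largely implicit) proof likewise notes that the displayed bounds on $v_{ij}$ place $V_n$ in $\mathcal{L}_n(m,M)$ with $m=e^{M_n}/(e^{M_n}-1)^2$ and $M=e^{L_n}/(e^{L_n}-1)^2$, and then invokes Proposition~\ref{Prop:Matrix:Inverse}. Your scalar estimates $m^{-1}\le e^{M_n}$ and $M\le L_n^{-2}$ (via $e^{t}-1\ge t\,e^{t/2}$), together with the absorption of the two lower-order terms, correctly supply the bookkeeping that the paper leaves unstated.
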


\begin{lemma}\label{dis-central1-lemma1}
Let $U_n=\mbox{cov}[W_n\{\mathbf{d}-\E(\mathbf{d})\}]$. Then
\begin{equation}
\|U_n\|\le ||V_n^{-1}-S_n|| + \frac{ e^{L_n}(e^{M_n}-1)^4}{(n-1)^2(e^{L_n}-1)^2e^{2M_n} }.
\end{equation}
\end{lemma}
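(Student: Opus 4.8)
The plan is to follow exactly the template of the proof of Lemma~\ref{lemma:binary:2}, adapting only the scalar bounds to the geometric case. The key algebraic observation is that $V_n$ is precisely the covariance matrix of $\mathbf{d}$ (each $d_i$ is a sum of independent geometric variables, so $\mathrm{cov}(\mathbf{d}) = V_n$ as for every exponential family) and $W_n = V_n^{-1} - S_n$, whence
\begin{equation*}
U_n = \mathrm{cov}[W_n\{\mathbf{d}-\E(\mathbf{d})\}] = W_n V_n W_n^T = (V_n^{-1}-S_n)V_n(V_n^{-1}-S_n).
\end{equation*}
Expanding the right-hand side and using $V_n^{-1} V_n = I_n$ collapses it to the clean decomposition
\begin{equation*}
U_n = (V_n^{-1}-S_n) - S_n(I_n - V_n S_n),
\end{equation*}
so that, by the triangle inequality for $\|\cdot\|$, it suffices to bound $\|S_n(I_n - V_n S_n)\|$.

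First I would compute the entries of the second term. Since $S_n = \diag(1/v_{11}, \ldots, 1/v_{nn})$, a direct calculation gives $(V_n S_n)_{ij} = v_{ij}/v_{jj}$ and hence
\begin{equation*}
\{S_n(I_n - V_n S_n)\}_{ij} = \frac{1}{v_{ii}}\Big(\delta_{ij} - \frac{v_{ij}}{v_{jj}}\Big) = \frac{(\delta_{ij}-1)v_{ij}}{v_{ii}v_{jj}},
\end{equation*}
which vanishes on the diagonal and equals $-v_{ij}/(v_{ii}v_{jj})$ off the diagonal, exactly as in the binary case.

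The only genuinely case-specific step is then to insert the geometric-weight bounds recorded at the start of Appendix~C. For $i \neq j$ I would use the off-diagonal upper bound $v_{ij} \le e^{L_n}/(e^{L_n}-1)^2$ together with the diagonal lower bound $v_{ii} \ge (n-1)e^{M_n}/(e^{M_n}-1)^2$ (obtained from $v_{ii} = \sum_{j \neq i} v_{ij}$ and the off-diagonal lower bound). Substituting these into $v_{ij}/(v_{ii}v_{jj})$ yields
\begin{equation*}
|\{S_n(I_n - V_n S_n)\}_{ij}| \le \frac{e^{L_n}/(e^{L_n}-1)^2}{[(n-1)e^{M_n}/(e^{M_n}-1)^2]^2} = \frac{e^{L_n}(e^{M_n}-1)^4}{(n-1)^2(e^{L_n}-1)^2 e^{2M_n}},
\end{equation*}
which is precisely the stated second term; combining with the decomposition above completes the argument.

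There is no real obstacle here: the structure is identical to Lemmas~\ref{lemma:binary:2} and~\ref{con-central1-lemma1}, and the proof is essentially bookkeeping. The one point requiring a little care is that $v_{ij} = e^{t}/(e^{t}-1)^2$ is decreasing in $t = \theta_i + \theta_j$, so that the \emph{maximum} of $v_{ij}$ corresponds to the \emph{minimum} edge-parameter sum $L_n$ and the \emph{minimum} of $v_{ij}$ to $M_n$. Matching these two extremes to the correct factors --- $e^{L_n}$ in the numerator coming from the $v_{ij}$ upper bound and $e^{M_n}$ in the denominator from the $v_{ii}, v_{jj}$ lower bounds --- is exactly what produces the asymmetric form of the final bound.
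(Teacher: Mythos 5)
Your proof is correct and follows exactly the route the paper intends: the paper itself gives no separate proof of this lemma, stating only that it follows ``similar to'' Lemma~\ref{lemma:binary:2}, whose decomposition $U_n = W_nV_nW_n^T = (V_n^{-1}-S_n)-S_n(I_n-V_nS_n)$ and entrywise bound $\{S_n(I_n-V_nS_n)\}_{ij} = (\delta_{ij}-1)v_{ij}/(v_{ii}v_{jj})$ you reproduce verbatim, correctly substituting the geometric-weight bounds $v_{ij}\le e^{L_n}/(e^{L_n}-1)^2$ and $v_{ii}\ge (n-1)e^{M_n}/(e^{M_n}-1)^2$ (with the right monotonicity of $e^t/(e^t-1)^2$) to recover the stated constant.
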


In order to prove the lemma below, we need one theorem due to Hillar and Wibisono (2013).

\begin{theorem}\label{Thm:ConsistencyDisc-con}
If $e^{5M_n}/L_n^{1/2}=o\{(n/\log n)^{1/2}\}$, the MLE $\boldsymbol{\widehat{\theta}}$ exists with probability approaching one and
is uniformly consistent in the sense that
\begin{equation*}
\|\boldsymbol{\widehat \theta} - \boldsymbol{\theta}\|_\infty
\leq O_p\left( \frac{(\exp(5M_n)-1)^2}{\exp(5M_n)} \sqrt{\frac{12}{\exp(L_n/2)-1}} \: \sqrt{\frac{2 \log n}{n}} \right )=o_p(1).
\end{equation*}
\end{theorem}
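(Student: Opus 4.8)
The plan is to establish existence of the MLE and the stated sup-norm rate simultaneously through a quantitative inversion of the mean map, exploiting the diagonal dominance of $V_n$ recorded at the start of this appendix. Write $g(\boldsymbol\theta)=\E_{\boldsymbol\theta}(\mathbf d)$, so that $g_i(\boldsymbol\theta)=\sum_{j\neq i}(e^{\theta_i+\theta_j}-1)^{-1}$ and the likelihood equations are exactly $g(\boldsymbol{\widehat\theta})=\mathbf d$. Because the model is a regular exponential family, $g$ is a smooth diffeomorphism onto its range with Jacobian $Dg(\boldsymbol\theta)=-V_n(\boldsymbol\theta)$, and the entrywise bounds $e^{M_n}/(e^{M_n}-1)^2\le v_{ij}\le e^{L_n}/(e^{L_n}-1)^2$ put $V_n$ in the class to which Proposition \ref{Prop:Matrix:Inverse} applies. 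Finding the MLE is therefore equivalent to finding a fixed point of
\[
\Phi(\boldsymbol\theta)=\boldsymbol\theta+V_n(\boldsymbol\theta^\ast)^{-1}\{g(\boldsymbol\theta)-\mathbf d\},
\]
where $\boldsymbol\theta^\ast$ is the true parameter; this is the natural Newton-type version of the coordinatewise fixed-point map used by Chatterjee, Diaconis and Sly (2011).

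First I would bound the residual at the truth. Each $d_i-\E(d_i)=\sum_{j\neq i}(a_{ij}-\E a_{ij})$ is a centered sum of $n-1$ independent geometric variables with total variance $v_{ii}$ and uniformly sub-exponential tails, the worst summand having mean and scale of order $(e^{L_n}-1)^{-1}$ (consistent with the moment computation already carried out in this appendix). A Bernstein inequality followed by a union bound over the $n$ coordinates then gives $\|\mathbf d-\E(\mathbf d)\|_\infty=O_p(\sqrt{v_{\max}\log n})$, where $v_{\max}\asymp (n-1)e^{L_n}/(e^{L_n}-1)^2$, the Gaussian regime being valid under the assumed growth. Passing this through the diagonal approximate inverse $S_n$ of Proposition \ref{Prop:Matrix:Inverse}, so that $(V_n^{-1}\mathbf u)_i\approx u_i/v_{ii}$, yields
\[
\|\Phi(\boldsymbol\theta^\ast)-\boldsymbol\theta^\ast\|_\infty=\|V_n^{-1}\{\E(\mathbf d)-\mathbf d\}\|_\infty=O_p\!\Big(\frac{\sqrt{v_{\max}\log n}}{v_{\min}}\Big)=:\rho,
\]
with $v_{\min}\asymp (n-1)e^{M_n}/(e^{M_n}-1)^2$.

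Next I would promote $\Phi$ to a contraction on the sup-norm ball $B(\boldsymbol\theta^\ast,\rho)$. Its differential is $D\Phi(\boldsymbol\theta)=I-V_n(\boldsymbol\theta^\ast)^{-1}V_n(\boldsymbol\theta)$, whose size is governed by $\|V_n^{-1}\|$ times the Lipschitz modulus of $\boldsymbol\theta\mapsto V_n(\boldsymbol\theta)$, i.e. by the third derivatives of $z$. On $B(\boldsymbol\theta^\ast,\rho)$ every parameter sum stays within $O(\rho)$ of $[L_n,M_n]$, so these quantities admit uniform bounds expressed through $L_n$ and $M_n$; as soon as this modulus times $\rho$ is below $1$, $\Phi$ maps $B(\boldsymbol\theta^\ast,\rho)$ into itself and is a contraction there. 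The Banach fixed-point theorem then supplies a unique $\boldsymbol{\widehat\theta}\in B(\boldsymbol\theta^\ast,\rho)$ with probability tending to one, which is simultaneously the existence assertion and the bound $\|\boldsymbol{\widehat\theta}-\boldsymbol\theta^\ast\|_\infty\le O_p(\rho)$; substituting the above orders of $v_{\min}$ and $v_{\max}$ turns $\rho$ into a rate of the stated form.

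The hard part will be the bookkeeping in the contraction step, because the mean map degenerates at both ends of the spectrum: the curvature of $x\mapsto(e^{x}-1)^{-1}$ blows up as a parameter sum $x\downarrow 0$ (quantified by $L_n$), while $V_n^{-1}$ blows up as $x\to\infty$ since then $v_{ij}\downarrow 0$ (quantified by $M_n$). Keeping $\|V_n^{-1}\|\cdot(\text{Lipschitz modulus})\cdot\rho$ below $1$ forces these worst-case constants to be evaluated not at $\boldsymbol\theta^\ast$ but uniformly over the ball the iterates may visit, and it is exactly this enlargement that replaces $M_n$ by a fixed multiple of $M_n$ and reshapes the $L_n$-dependence. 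Tracking the constants along the precise coordinatewise route of Hillar and Wibisono (2013) produces the displayed factors $(e^{5M_n}-1)^2/e^{5M_n}$ and $\sqrt{12/(e^{L_n/2}-1)}$, and the requirement that the contraction close up is precisely the hypothesis $e^{5M_n}/L_n^{1/2}=o\{(n/\log n)^{1/2}\}$.
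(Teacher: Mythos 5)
A preliminary point: the paper contains no proof of this statement. Theorem \ref{Thm:ConsistencyDisc-con} is quoted verbatim from Hillar and Wibisono (2013) and is used in Appendix C purely as a black box, supplying the bound on $\lambda_n=\max_i|\hat{\theta}_i-\theta_i|$ that drives Lemma \ref{dis-central-lemma2}. Your proposal must therefore stand on its own as a proof of the cited result, and it does not, for a reason that is ultimately circular: the only place where the specific hypothesis $e^{5M_n}/L_n^{1/2}=o\{(n/\log n)^{1/2}\}$ and the specific factors $(e^{5M_n}-1)^2/e^{5M_n}$ and $\sqrt{12/(e^{L_n/2}-1)}$ ever enter your argument is the closing sentence, where you assert that ``tracking the constants along the precise coordinatewise route of Hillar and Wibisono (2013)'' produces them. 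That is an appeal to the very proof you are supposed to be reconstructing. Nothing in your Newton-map setup is shown to generate the exponent $5M_n$ or the factor $e^{L_n/2}-1$; these are artifacts of Hillar and Wibisono's own iteration, not of yours, and a proof whose quantitative content is deferred to the cited paper proves nothing.

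The steps you do sketch also do not survive the bookkeeping you postpone. First, your $\rho$ keeps only the diagonal approximation $S_n$ of $V_n^{-1}$. By Proposition \ref{Prop:Matrix:Inverse}, the correction $W_n=V_n^{-1}-S_n$ contributes up to $n\|W_n\|\,\|\mathbf{d}-\E(\mathbf{d})\|_\infty$, which exceeds your diagonal term $\sqrt{v_{\max}\log n}/v_{\min}$ by a factor of order $(M/m)^2$; here $m\asymp e^{-M_n}$ and $M\asymp(e^{L_n}-1)^{-2}$, so this factor is of order $e^{2M_n}/L_n^4$ and is unbounded under the stated hypothesis. Thus even the claimed order of $\rho$ is unjustified. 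Second, and more seriously, the contraction condition is never checked against the hypothesis, and with the natural estimates it fails in regimes the theorem covers. Even granting your optimistic $\rho\asymp (e^{M_n}/L_n)\sqrt{\log n/n}$, the modulus of $D\Phi$ on the ball is at least of order $(1/v_{\min})\cdot n\cdot L_n^{-3}\cdot\rho$ (the entrywise Lipschitz constant of $\boldsymbol{\theta}\mapsto V_n(\boldsymbol{\theta})$ is of order $L_n^{-3}$, since the third derivatives of $z$ blow up as parameter sums approach zero), i.e.\ of order $(e^{2M_n}/L_n^{4})\sqrt{\log n/n}$. Taking $L_n\asymp n^{-1/10}$ and $e^{5M_n}\asymp L_n^{1/2}(n/\log n)^{1/2}/\log n$ satisfies the hypothesis of the theorem, yet makes this quantity diverge like $n^{2/25}$ up to logarithmic factors, so $\Phi$ is not verifiably a contraction on your ball. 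Carried out honestly, your route would at best prove a variant of the theorem under a strictly stronger condition on $(M_n,L_n)$; it does not establish the statement as given.
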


\begin{lemma}\label{dis-central-lemma2}
If $e^{17M_n}/ L_n^3 =o\{n^{1/2}/\log n \}$, then for $i=1, \ldots, r$ with a fixed constant $r$,
\begin{equation}
\theta_i - \hat{\theta}_i =[V_{n}^{-1}\{\mathbf{d} - \E(\mathbf{d})\}]_i+o_p(n^{-1/2}).
\end{equation}
\end{lemma}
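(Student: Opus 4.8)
The plan is to follow the template of Lemma~\ref{concentral-lemma2} verbatim in structure, only replacing the exponential-model mean function $1/t$ by the geometric mean $\mu(t)=1/(e^{t}-1)$, which equals $\E(a_{ij})$ when $\theta_i+\theta_j=t$. First I would restrict all computations to the event on which the MLE exists and $\lambda_n:=\max_i|\hat{\theta}_i-\theta_i|$ satisfies the consistency rate of Theorem~\ref{Thm:ConsistencyDisc-con}; since that event has probability tending to one, it is enough to produce a deterministic estimate on it. There $\hat{\theta}_i+\hat{\theta}_j$ stays within $2\lambda_n$ of $\theta_i+\theta_j\in[L_n,M_n]$, so for large $n$ every argument that appears lies in an interval like $[L_n/2,\,M_n+1]$, which is what will let me bound the curvature of $\mu$.

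Next I would Taylor-expand. Differentiation gives $\mu'(t)=-e^{t}/(e^{t}-1)^{2}$, which at $t=\theta_i+\theta_j$ is exactly $-v_{ij}$, and $\mu''(t)=e^{t}(e^{t}+1)/(e^{t}-1)^{3}$. Expanding $\mu(\hat{\theta}_i+\hat{\theta}_j)$ about $\theta_i+\theta_j$ and summing over $j$ yields, just as in the finite-discrete and continuous cases,
\[
\mathbf{d}-\E(\mathbf{d})=V_n(\boldsymbol{\theta}-\boldsymbol{\hat\theta})+\mathbf{h},\qquad h_i=\tfrac12\sum_{j\neq i}\mu''(\gamma_{ij})\big[(\hat{\theta}_i-\theta_i)+(\hat{\theta}_j-\theta_j)\big]^{2},
\]
with $\gamma_{ij}$ between $\theta_i+\theta_j$ and $\hat{\theta}_i+\hat{\theta}_j$ (the cancellation of the linear part into $V_n(\boldsymbol{\theta}-\boldsymbol{\hat\theta})$ uses $v_{ii}=\sum_{j\neq i}v_{ij}$). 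Inverting gives $\boldsymbol{\theta}-\boldsymbol{\hat\theta}=V_n^{-1}\{\mathbf{d}-\E(\mathbf{d})\}+V_n^{-1}\mathbf{h}$, so the entire task reduces to showing $(V_n^{-1}\mathbf{h})_i=o_p(n^{-1/2})$ for $i\le r$.

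Then I would split $V_n^{-1}\mathbf{h}=S_n\mathbf{h}+W_n\mathbf{h}$ and estimate the two pieces separately. A crude uniform bound $|\mu''(\gamma_{ij})|\lesssim e^{2M_n}/L_n^{3}$ over the relevant interval gives $\max_i|h_i|\lesssim n\,e^{2M_n}L_n^{-3}\lambda_n^{2}$. For the diagonal part I would use $(S_n\mathbf{h})_i=h_i/v_{ii}$ together with $v_{ii}^{-1}\lesssim e^{M_n}/n$, which follows from the lower bound $v_{ii}\ge(n-1)e^{M_n}(e^{M_n}-1)^{-2}$; for the off-diagonal part I would use $|(W_n\mathbf{h})_i|\le\|W_n\|\,n\max_j|h_j|$ together with the inverse-approximation estimate $\|W_n\|\lesssim e^{3M_n}L_n^{-4}(n-1)^{-2}$ of Lemma~\ref{lemma:matrix-discrete}. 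Substituting the squared consistency rate $\lambda_n^{2}=O_p(e^{10M_n}L_n^{-1}n^{-1}\log n)$ from Theorem~\ref{Thm:ConsistencyDisc-con} into both estimates and collecting the powers of $e^{M_n}$ and the inverse powers of $L_n$ then shows that both remainder pieces are $o_p(n^{-1/2})$ as soon as $e^{17M_n}/L_n^{3}=o(n^{1/2}/\log n)$, which is the hypothesis of the lemma. Once this is in hand the conclusion $\theta_i-\hat{\theta}_i=[V_n^{-1}\{\mathbf{d}-\E(\mathbf{d})\}]_i+o_p(n^{-1/2})$ is immediate, and (exactly as in the proof of Theorem~\ref{Thm:Asym:Cont}) feeding it into Lemma~\ref{dis-central1-lemma1} and Proposition~\ref{pro:central:discrete} will give the asymptotic normality of Theorem~\ref{Thm:ConsistencyDisc}.

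The hard part will be the control of the remainder, and within it the term $W_n\mathbf{h}$. Three separate sources of blow-up have to be reconciled at once: the consistency rate itself carries a large $e^{5M_n}$ prefactor, so $\lambda_n^{2}$ already contributes $e^{10M_n}$; the curvature bound on $\mu''$ contributes further factors of $e^{M_n}$ and, more dangerously, inverse powers of $L_n$ as the parameters approach the boundary $L_n\to0$; and the approximation error $\|W_n\|$ of the diagonal inverse is itself exponentially large in $M_n$ and polynomially large in $1/L_n$. The delicate bookkeeping is to verify that the accumulation of $n$ quadratic remainders inside $n\max_j|h_j|$ is damped strongly enough by the $(n-1)^{-2}$ factor in $\|W_n\|$ that, after all the exponential-in-$M_n$ and inverse-$L_n$ factors are multiplied together, the product still decays faster than $n^{-1/2}$ under the stated growth restriction; this interplay is precisely what pins down the admissible rate at which $M_n$ may grow and $L_n$ may shrink.
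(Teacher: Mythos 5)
Your proposal is correct and follows the paper's Appendix C proof essentially step for step: the same restriction to the consistency event of Theorem~\ref{Thm:ConsistencyDisc-con}, the same second-order Taylor expansion of the geometric mean $1/(e^{t}-1)$ whose linear part assembles into $V_n(\boldsymbol{\theta}-\boldsymbol{\hat{\theta}})$ via $v_{ii}=\sum_{j\neq i}v_{ij}$, the same splitting $V_n^{-1}\mathbf{h}=S_n\mathbf{h}+W_n\mathbf{h}$, and the same bounds $|(S_n\mathbf{h})_i|=|h_i|/v_{ii}$ and $|(W_n\mathbf{h})_i|\le\|W_n\|\,n\max_j|h_j|$ using Lemma~\ref{lemma:matrix-discrete} and the squared consistency rate. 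The one caveat is the closing power count: your (correctly computed) denominators $L_n^{4}$ and $L_n^{8}$ do not literally reduce to the hypothesis $e^{17M_n}/L_n^{3}=o\{n^{1/2}/\log n\}$ in regimes such as $M_n$ bounded and $L_n\to 0$, but the paper's own proof has exactly the same discrepancy---its displayed bounds $O\{e^{15M_n}\log n/(nL_n^{3})\}$ and $O\{e^{17M_n}\log n/(nL_n^{3})\}$ likewise do not follow from its intermediate inequalities, which yield $L_n^{4}$ and $L_n^{8}$---so your argument matches the paper's at the paper's own level of rigor.
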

\begin{proof}
By Theorem \ref{Thm:ConsistencyDisc-con} (b), if $e^{17M_n}/ L_n^3 =o\{n^{1/2}/\log n \}$, then
\[
\lambda_n=\max_{1\le i\le n} |\hat{\theta}_i-\theta_i|=O_p\Big\{ \frac{(e^{5M_n}-1)^2}{e^{5M_n}}\sqrt{\frac{\log n}{n(e^{L_n/2}-1)}} \Big\}.
\]
Let $\hat{\gamma}_{ij}=\theta_i + \theta_j - \hat{\theta}_i - \hat{\theta}_j$.
By Taylor's expansion, for any $i\neq j$,
\[
\frac{1}{e^{\hat{\theta}_i+\hat{\theta}_j}-1} - \frac{1}{e^{\theta_i+\theta_j}-1}
=
\frac{e^{\theta_i+\theta_j}}{(e^{\theta_i+\theta_j}-1)^2}\hat{\gamma}_{ij} + h_{ij},
\]
where
\[
h_{ij}= -\frac{e^{\theta_i+\theta_j+\alpha_{ij}\hat{\gamma}_{ij}}(1+e^{\theta_i+\theta_j+\alpha_{ij}\hat{\gamma}_{ij}}) }{(e^{\theta_i+\theta_j+\alpha_{ij}\hat{\gamma}_{ij}}-1)^3}\hat{\gamma}_{ij}^2,
\]
and $0 < \alpha_{ij}< 1$. It is easy to verify that
\begin{equation*}
\mathbf{d}  - \E(\mathbf{d}) = V_n( \boldsymbol{\theta} - \boldsymbol{\hat{\theta}} ) + \mathbf{h},
\end{equation*}
where $\mathbf{h}=(h_1, \ldots, h_n)^T$ and $h_i=\sum_{j\neq i}h_{ij}$.
Equivalently,
\begin{equation}\label{represent2}
 \boldsymbol{\theta} - \boldsymbol{\hat{\theta}} = V_n^{-1} \{\mathbf{d} - \E (\mathbf{d})\} + V_n^{-1}\mathbf{h}.
\end{equation}
Since $\theta_i + \theta_j >0 $ and $\lambda_n$ is sufficiently small, we have
\[
|h_{ij}| \le \frac{ e^{2(M_n+\lambda_n)}(1+e^{2(M_n+\lambda_n)})}{[e^{2(L_n-\lambda_n)}-1]^3}\lambda_n^2,~~~~
|h_i|\le \sum_{j\neq i}|h_{ij}|\le (n-1)\frac{ e^{2(M_n+\lambda_n)}(1+e^{2(M_n+\lambda_n)})}{[e^{2(L_n-\lambda_n)}-1]^3}\lambda_n^2.\]
 Note that
$(S_n \mathbf{h})_i  =  h_i/v_{ii}$ and $(V_n^{-1} \mathbf{h})_i=(S_n \mathbf{h})_i+(W_n \mathbf{h})_i$. By
direct calculation, we have
\begin{equation*}
|(S_n \mathbf{h})_i|  \le \frac{(e^{M_n}-1)^2}{e^{M_n}}\times \frac{ e^{2(M_n+\lambda_n)}(1+e^{2(M_n+\lambda_n)})}{[e^{2(L_n-\lambda_n)}-1]^3} \lambda_n^2
= O\{ \frac{e^{15M_n}\log n}{nL_n^3}  \},
\end{equation*}
and, by Lemma \ref{lemma:matrix-discrete},
\begin{equation*}
|(W_n h)_i| \le  \|W_n\|\times (n\max_i|h_i|)  \le O\{ \frac{e^{17M_n}\log n}{n L_n^3 }
\}.
\end{equation*}
If $e^{17M_n}/ L_n^3 =o\{n^{1/2}/\log n \}$, then $|(V_n^{-1}\mathbf{h})_i|\le |(S_n\mathbf{h})_i|+|(W_n\mathbf{h})_i|=o(n^{-1/2})$. This completes the proof.
\end{proof}

\begin{proof}[Proof of Theorem~\ref{Thm:ConsistencyDisc}]
By \eqref{represent2},
\begin{equation*}
(\boldsymbol{\theta} - \boldsymbol{\hat{\theta}} )_i= [S_n\{\mathbf{d} - \E(\mathbf{d})\}]_i+ [W_n\{\mathbf{d}-\E(\mathbf{d})\}]_i + (V_n^{-1}\mathbf{h})_i.
\end{equation*}
By Lemmas \ref{dis-central1-lemma1} and \ref{dis-central-lemma2}, if $e^{17M_n}/ L_n^3 =o\{n^{1/2}/\log n \}$, then
\[
\hat{\theta}_i - \theta_i= (d_i - \E(d_i))/v_{ii} + o_p(n^{-1/2}).
\]
Theorem \ref{Thm:ConsistencyDisc} follows
directly from Proposition \ref{pro:central:discrete}.
\end{proof}

\section*{Acknowledgements}
We are very grateful to two anonymous referees, an associate editor, and the Editor for
their valuable comments that have greatly improved the manuscript. Yan was partially supported by
by the National Natural Science Foundation of China (No. 11341001) 
and Postdoctoral Science Foundation of China.

\clearpage
\newpage

\begin{figure}[h]
\centering
\caption{The plots of $\hat{\theta}_i$ vs $\theta_i$ for $1\le i\le n$ based on $20$ samples. }
\label{figure1}
\subfigure[Continuous weights]{
\includegraphics[ height=4in, width=6in, angle=0]{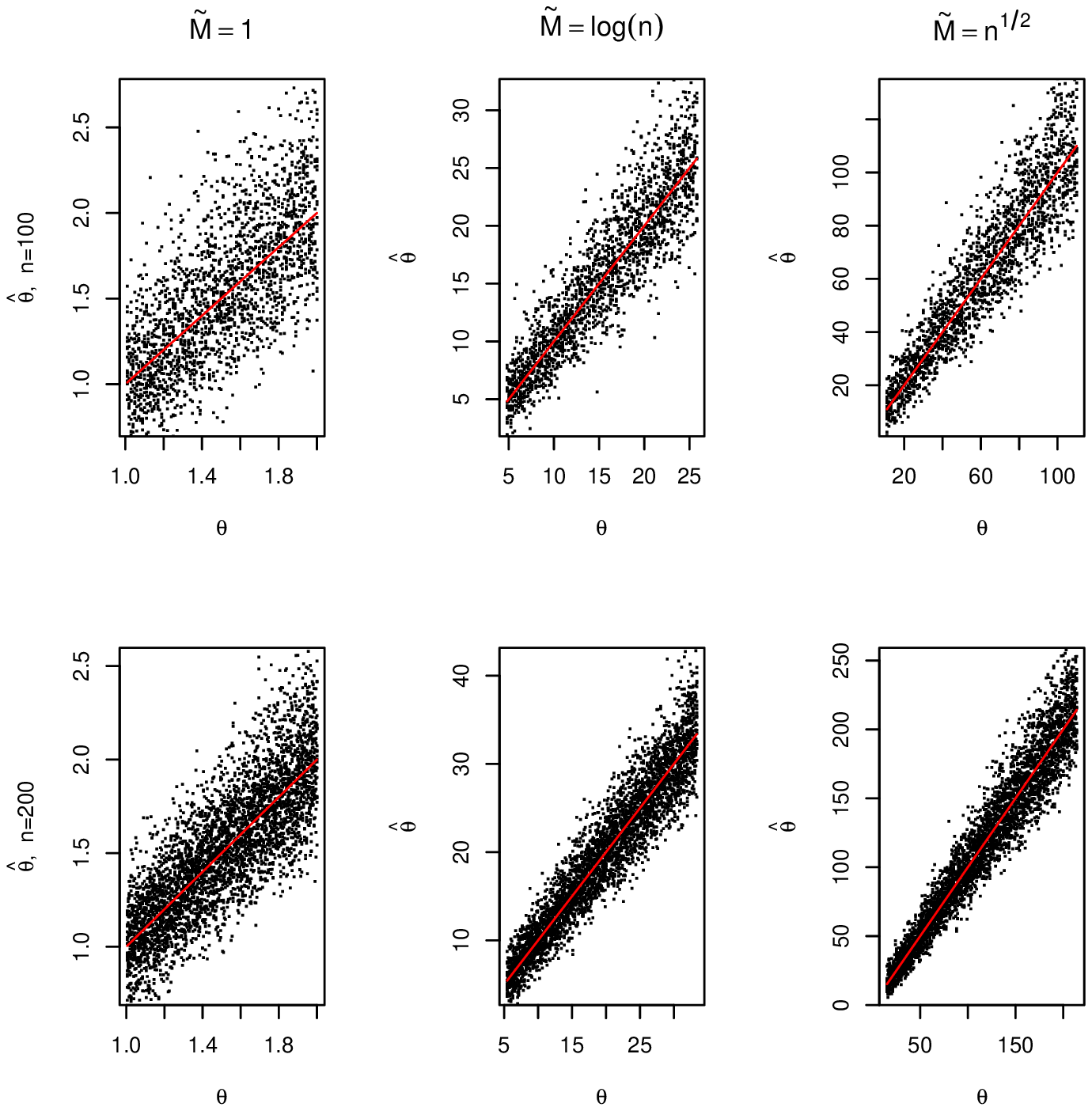}
}
\subfigure[Discrete weights]{
\includegraphics[height=4in, width=6in, angle=0]{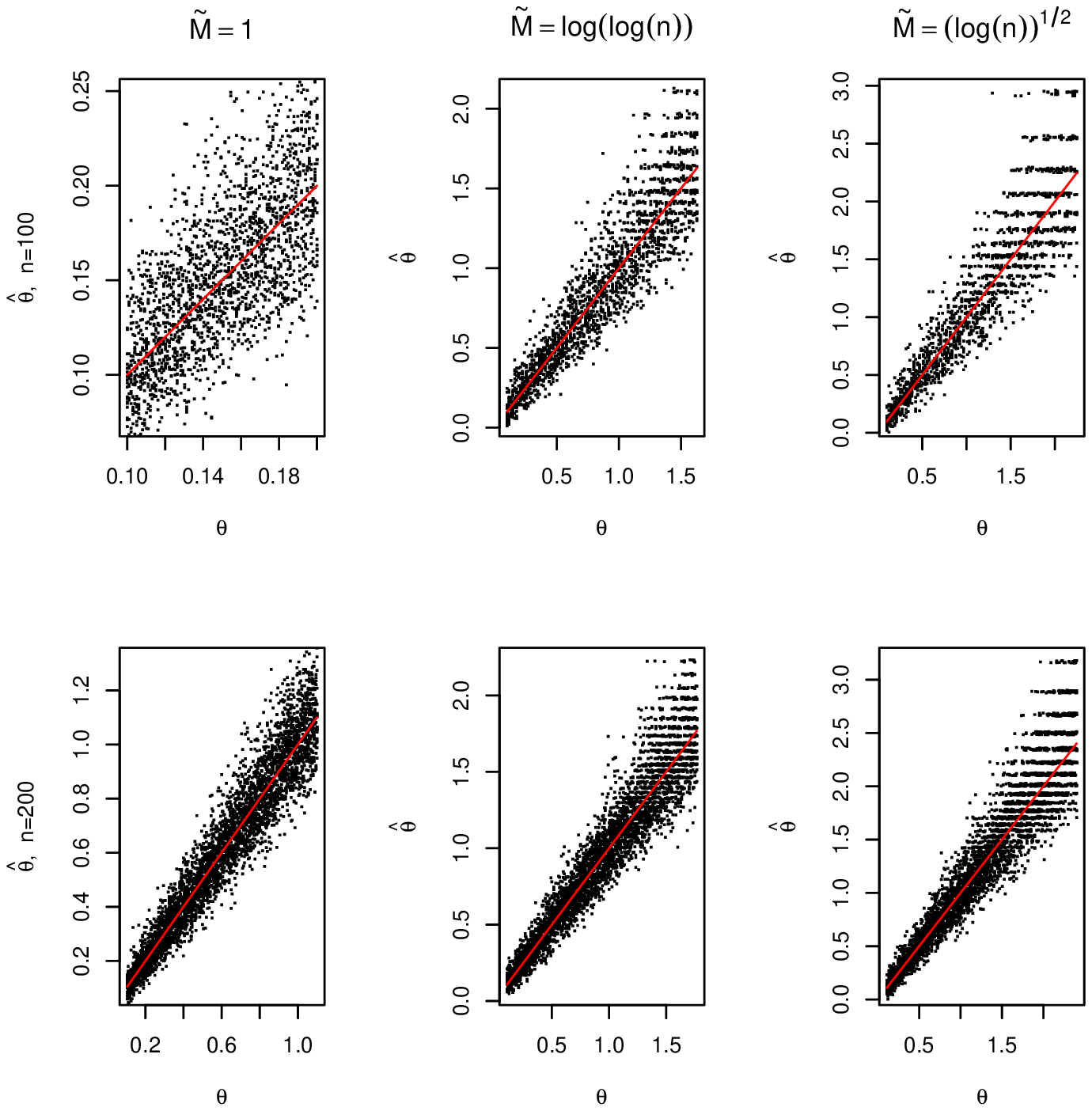}
}
\end{figure}

\begin{figure}[!htb]
\centering
\caption{The QQ plots of $\hat{v}_{ii}^{1/2}(\hat{\theta}_i-\theta_i)$ in the case of continuous weights.}
\label{figure-continuous-qq}
\subfigure[$n=100$]{
\includegraphics[ height=4in, width=6in, angle=0]{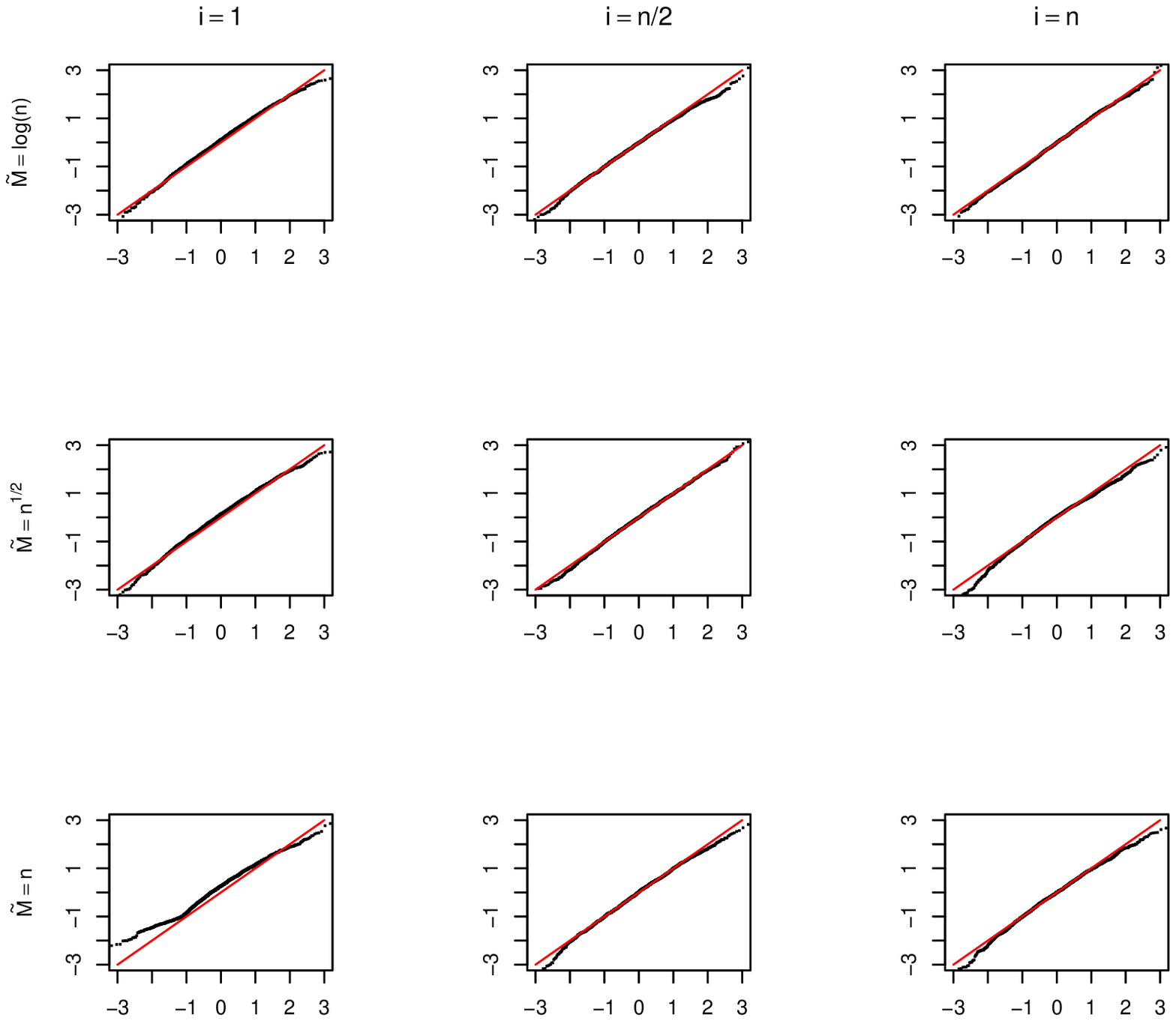}
}
\subfigure[$n=200$]{
\includegraphics[height=4in, width=6in, angle=0]{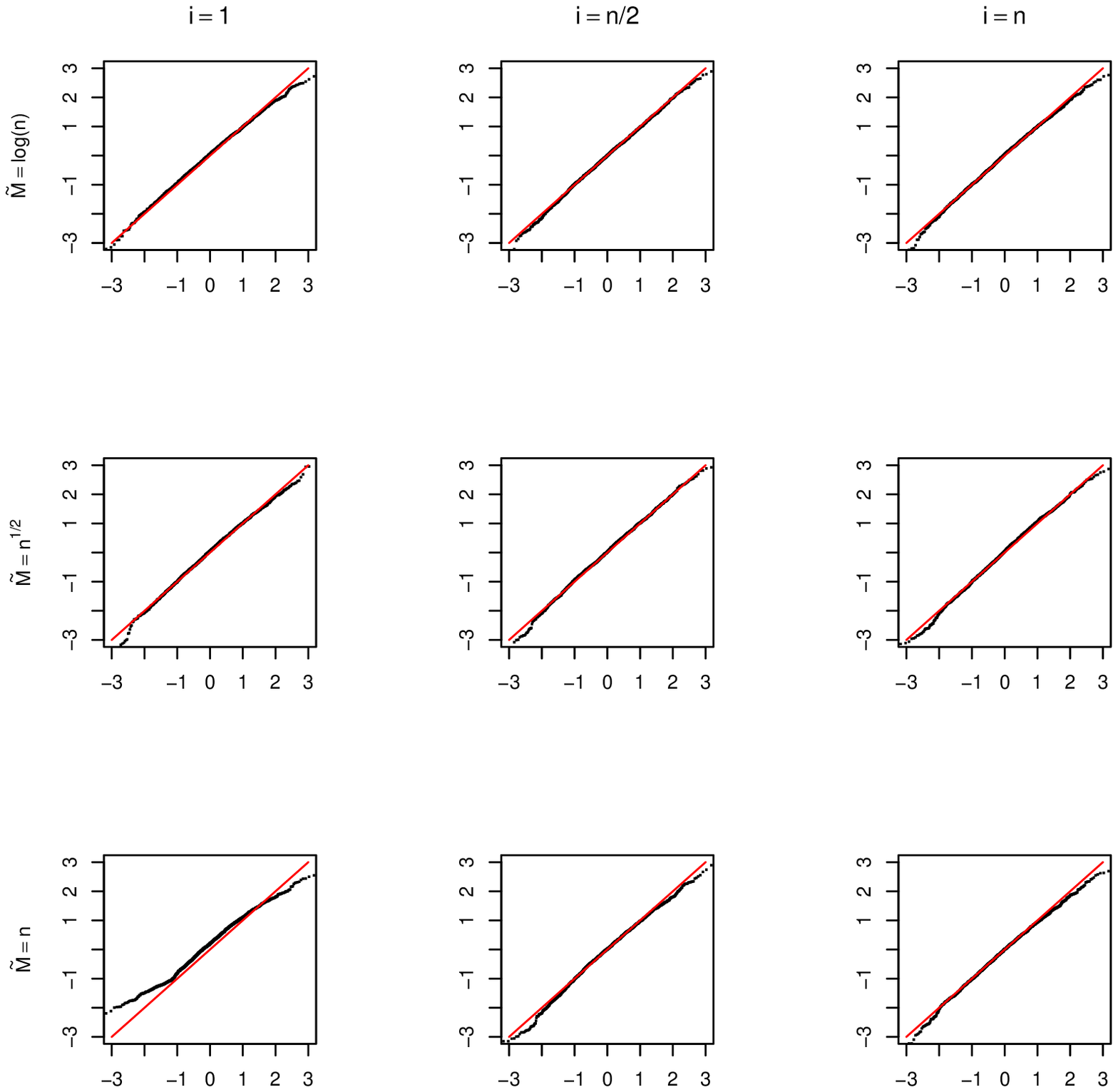}
}
\end{figure}

\newpage

\begin{figure}[!htb]
\centering
\caption{The QQ plots of $\hat{v}_{ii}^{1/2}(\hat{\theta}_i-\theta_i)$ in the case of discrete weights. }
\label{figure-discrete-qq}
\subfigure[$n=100$]{
\includegraphics[ height=4in, width=6in, angle=0]{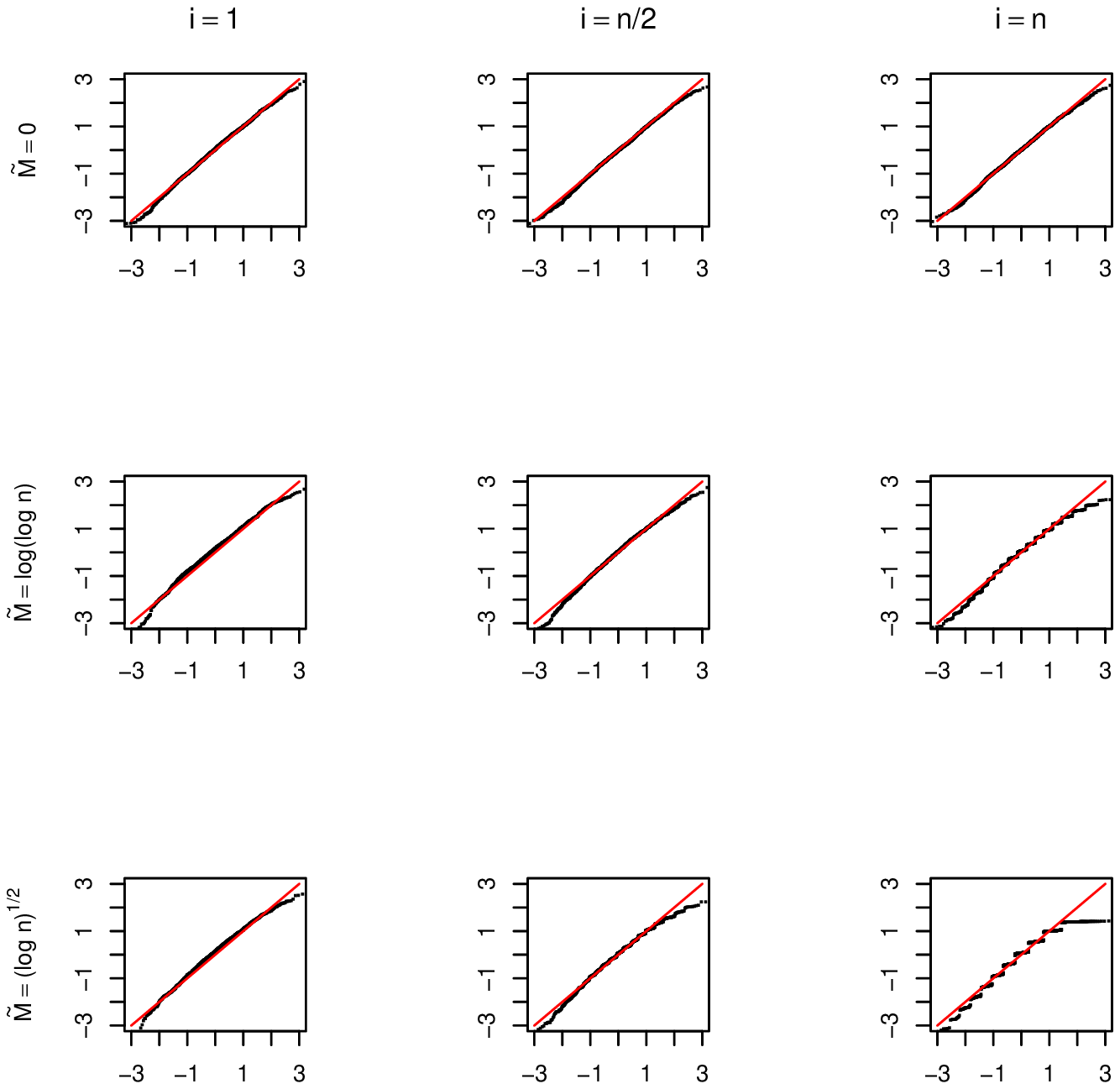}
}
\subfigure[$n=200$]{
\includegraphics[height=4in, width=6in, angle=0]{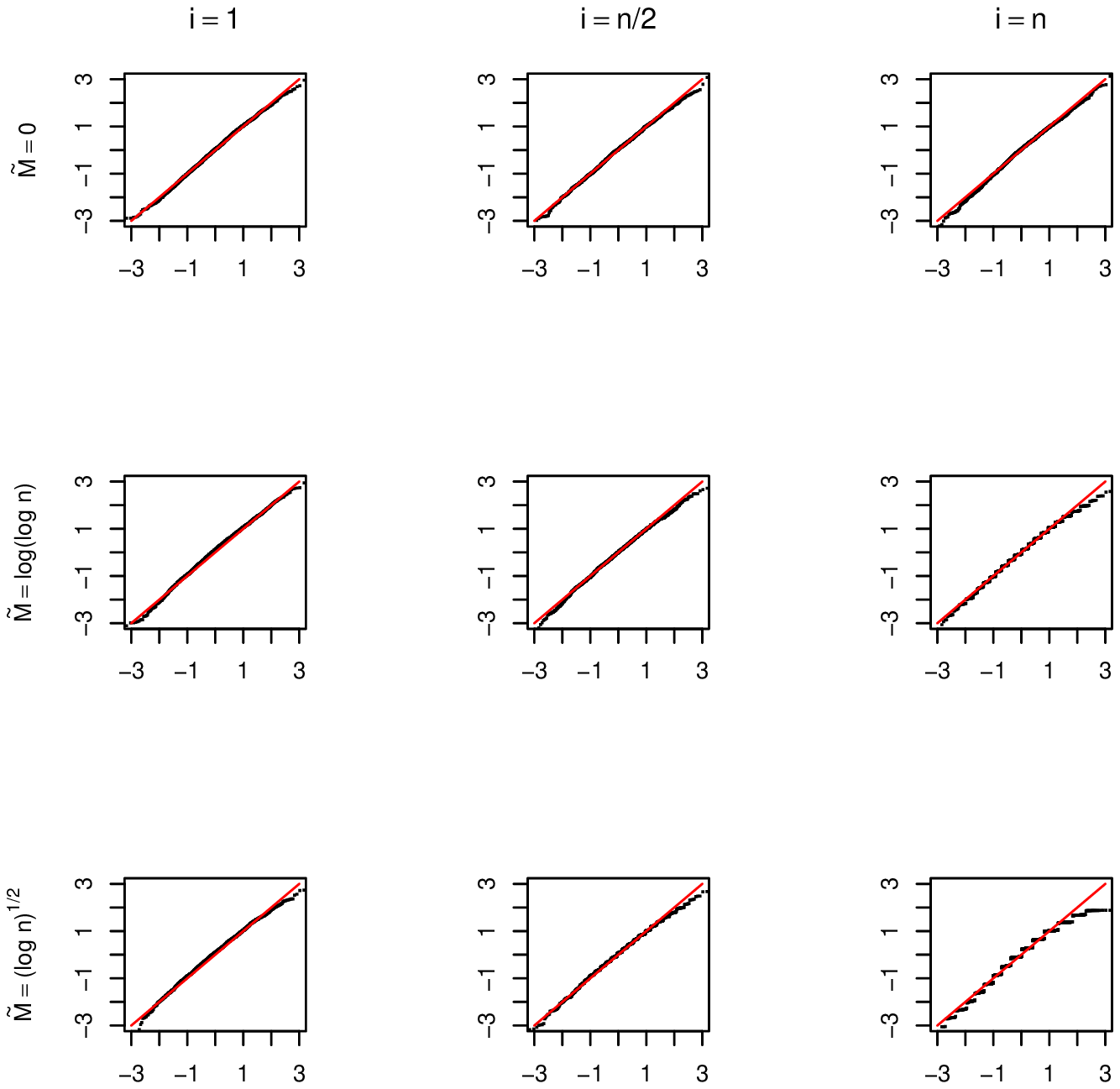}
}
\end{figure}

\newpage

\begin{table}[h]\centering
\caption{Estimated coverage probabilities of $\theta_i-\theta_j$ for pair $(i,j)$ as well as the probabilities
that the MLE does not exist (in parentheses), multiplied by $100$, and
the length of confidence intervals (in square brackets).}
\label{Table:continuous:dis}
\scriptsize
\vskip5pt
\begin{tabular}{ccccccc}
\hline
\\
\multicolumn{6}{c}{ Weighted random graphs with continuous weights}\\
\hline
\\
n       &  $(i,j)$ & $\widetilde{M}=1$ & $\widetilde{M}=\log n$ & $\widetilde{M}=n^{1/2}$ & $\widetilde{M}=n$ \\
\hline
\\
50           &$(1,50)  $&$  95.55[2.00]  $&$ 95.40[11.83]  $&$ 95.62[28.08] $&$  96.35[730.54]$\\
             &$(25,26) $&$  95.63[2.39]  $&$ 95.15[17.93]  $&$ 95.57[48.59] $&$  95.95[1889.10]$\\
             &$(49,50) $&$  95.25[2.80]  $&$ 95.64[24.03]  $&$ 95.25[69.17] $&$  96.15[2932.88]$\\

&&&&&&\\
100         &$(1,100) $&$  95.43[1.39]  $&$ 95.25[9.84]  $&$ 94.75[30.29]  $&$ 95.45[1409.94]$ \\
            &$(50,51) $&$  94.75[1.67]  $&$ 95.60[16.20]  $&$ 95.25[62.13] $&$ 95.05[5059.72]$ \\
            &$(99,100)$&$  95.55[1.95]  $&$ 94.85[22.29] $&$ 95.25[91.70]  $&$ 96.45[8060.81]$ \\

&&&&&&\\
200         &$(1,200)   $&$ 95.51[0.97] $&$ 95.45[8.21]  $&$95.45[33.93] $&$ 95.35[2750.85]$\\
            &$(100,101) $&$ 95.10[1.17] $&$ 95.05[14.28] $&$95.05[81.45] $&$ 95.35[13958.43]$\\
            &$(199,200) $&$ 95.36[1.37] $&$ 94.67[20.10] $&$95.39[123.43]$&$ 95.59[22556.82]$ \\

&&&&&&\\
\hline
\\
\multicolumn{6}{c}{ Weighted random graphs with discrete weights} \\
\hline
\\
n       &  $(i,j)$ & $\widetilde{M}=0$ & $\widetilde{M}=\log(\log n)$ & $\widetilde{M}=(\log n)^{1/2}$ & $\widetilde{M}=\log(n)$ \\
\hline
\\
50           &$(1,50)   $&$ 95.55[0.16](0) $&$ 94.37[0.56](1.35) $&$ 95.04[0.71](51.55) $&$ (100)$ \\
             &$(25,26)  $&$ 95.10[0.16](0) $&$ 96.45[1.30](1.35) $&$ 97.27[2.01](51.55) $&$ (100)$ \\
             &$(49,50)  $&$ 95.95[0.16](0) $&$ 97.52[2.23](1.35) $&$ 100.00[3.65](51.55) $&$ (100)$ \\

&&&&&&\\
100         &$(1,100) $&$  95.17[0.11](0) $&$  94.45[0.37](0.05)$&$  95.46[0.46](16.75)$&$  (100)$\\
            &$(50,51) $&$  95.15[0.11](0) $&$  95.75[0.99](0.05)$&$  95.34[1.45](16.75)$&$  (100)$\\
            &$(99,100)$&$  94.95[0.11](0) $&$  95.85[1.74](0.05)$&$  98.91[3.00](16.75)$&$  (100)$\\

&&&&&&\\
200         &$(1,200)   $&$ 95.15[0.08](0) $&$  94.68[0.26](0) $&$ 94.77[0.31](1.60) $&$ (100)$ \\
            &$(100,101) $&$ 94.85[0.08](0) $&$  95.55[0.75](0) $&$ 95.57[1.09](1.60) $&$ (100)$ \\
            &$(199,200) $&$ 95.45[0.08](0) $&$  95.62[1.33](0)  $&$ 97.51[2.28](1.60) $&$ (100)$ \\
\hline
\end{tabular}
\end{table}

\end{document}